\theoremstyle{plain}
\newtheorem{theorem}{Theorem}[section]
\newtheorem{lemma}[theorem]{Lemma}
\newtheorem{proposition}[theorem]{Proposition}
\newtheorem{corollary}[theorem]{Corollary}
\newtheorem{definition}[theorem]{Definition}
\newtheorem{remark}[theorem]{Remark}
\newtheorem{construction}[theorem]{Construction}
\newtheorem*{notation}{Notation}
\newtheorem*{claim}{Claim}
\theoremstyle{remark}
\newtheorem{example}[theorem]{Example}
\title{Clusters, inertia, and root numbers}
\author{Matthew Bisatt}
\address{Fry Building, University of Bristol, Bristol, BS8 1UG, UK}
\email{matthew.bisatt@bristol.ac.uk}
\date{\today}
\begin{document}
\global\long\def\qq{\mathbb{Q}}
\global\long\def\qp{\mathbb{Q}_p}
\global\long\def\zz{\mathbb{Z}}
\global\long\def\cc{\mathbb{C}}
\global\long\def\cR{\mathcal{R}}
\global\long\def\ss{\mathfrak{s}}
\global\long\def\ds{d_{\ss}}
\global\long\def\Is{I_{\ss}}
\global\long\def\es{\varepsilon_{\ss}}
\global\long\def\lcm{\operatorname{lcm}}
\global\long\def\sp{\operatorname{sp}}
\global\long\def\Gal{\operatorname{Gal}}
\global\long\def\Jac{\operatorname{Jac}}
\global\long\def\Stab{\operatorname{Stab}}
\global\long\def\Sym{\operatorname{Sym}}
\global\long\def\ord{\operatorname{ord}}
\global\long\def\Ind{\operatorname{Ind}}
\global\long\def\minpoly{\operatorname{minpoly}}
\global\long\def\denom{\operatorname{denom}}

\begin{abstract}
	In a recent paper of Dokchitser--Dokchitser--Maistret--Morgan, the authors introduced the concept of a cluster picture associated to a hyperelliptic curve from which they are able to recover numerous invariants, including the inertia representation on the first \'{e}tale cohomology group of the curve. The purpose of this paper is to explore the functionality of these cluster pictures and prove that the inertia representation of a hyperelliptic curve is a function of its cluster picture.
\end{abstract}

\maketitle
\tableofcontents

\newpage
\section{Introduction}

Given a hyperelliptic curve $C: y^2=f(x)$ over a non-Archimedean local field $K$, one is generally interested in both its arithmetic and geometric properties such as:
\begin{itemize}
	\item Is it semistable?
	\item What is the special fibre of the minimal regular model?
	\item What is the conductor of its Jacobian?
	\item What is the Galois action on its first \'{e}tale cohomology?
\end{itemize}

It turns out that these properties are intimately related to the configuration of the roots of $f$ over an algebraic closure via the associated valuation; such a configuration is called a cluster picture. In particular, if the residue field of $K$ has odd characteristic, then Dokchitser--Dokchitser--Maistret--Morgan have recently shown \cite{DDMM} that the above questions can be approached via these objects. 

Cluster pictures are however a relatively new invention in this context. Currently, the approach to the above questions still relies on knowledge of the original polynomial $f$, in particular the inertia action on its roots, in order to answer the above questions. Our purpose is to remove this dependency on the polynomial and to explore their functionality in more detail.

In particular, we provide answers to the following questions when the splitting field of $f$ is tamely ramified.
\begin{enumerate}
	\item Can the inertia action on the roots be recovered from the cluster picture?
	\item To what extent does the inertia action and cluster configuration determine the distances between roots?
	\item When does an abstract cluster picture arise from a hyperelliptic curve?
	\begin{enumerate}
		\item If so, can we give an example of such a curve?
	\end{enumerate}
	\item Can one give a closed form for the inertia representation of $C$?
	\begin{enumerate}
		\item If so, can it be simplified to compute the root number?
	\end{enumerate}
		\item Does the cluster picture of an elliptic curve determine its Kodaira type or root number?
\end{enumerate}

This increased functionality will make the formula for the inertia representation more amenable for families of hyperelliptic curves and in particular to study the distribution of root numbers across this family, building on the author's previous work \cite{Bis19}. As an example, we determine all inertia representations of genus two hyperelliptic curves from their cluster classification in Appendix \ref{tables}; this is similar to the more established Namikawa--Ueno classification \cite{NU73} but has the benefit of being simpler to use. Moreover, we show that the cluster picture also encodes the classical Kodaira type for elliptic curves.

In the course of this paper, we approach each of our questions and focus more on the applications than the theoretical aspect as this is how we envision our results being used in future work. To this extent, we have relegated the bulk of the proofs to the appendices, and demonstrate their applications through an extended example in \S \ref{egsection}.

\subsection{Statement of results}
We shall quickly introduce some notation and the relevant definitions to state our main theorems. \\

\noindent \textbf{Notation.} \\
\begin{tabular}{cl}
	$K$ & finite extension of $\qp$, $p$ odd; \\
	$\overline{K}$ & algebraic closure of $K$; \\
	$\mathcal{O}_K$ & ring of integers; \\
	$v$ & valuation of $\overline{K}$ such that $v(K^{\times})=\zz$; \\
	$f$ & $\in K[x]$ a squarefree polynomial of degree $\deg f$; \\
	$\cR$ & $=\{r \in \overline{K} \, | \, r \text{ is a root of }f \}$; \\
	$I$ & inertia subgroup of $\Gal(K(\cR)/K)$.
\end{tabular}

\begin{definition}
	Let $\alpha=\frac{a}{b} \in \qq$ with $\gcd(a,b)=1$ and $b>0$. Then we define the denominator of $\alpha$ as $$\denom \alpha =b.$$
\end{definition}

\begin{definition}
\label{clusterdef}
\begin{itemize}
	\item[]
	\item A \emph{cluster} is a nonempty subset $\ss \subseteq \cR$ of the form $\ss = D \cap \cR$ for some disc $D= \{x \in \overline{K} \, | \, v(x-z) \geqslant d\}$, for some $z \in \overline{K}$ and $d \in \qq$.
	\item If $|\ss|>1$, then $\ss$ is called \emph{proper} and we associate to it a depth $$\ds:= \min_{r,r' \in \ss} v(r-r').$$
	\item If $\ss' \subsetneq \ss$ is a maximal subcluster, then we call $\ss'$ a child of $\ss$. We further call $\ss$ the parent of $\ss'$ and write this as $\ss=P(\ss')$.
	\item For a cluster $\ss$, we denote by $\Is$ the stabiliser of $\ss$ under $I$.
	\item Fix a cluster $\ss$. If there exists a unique fixed child $\ss'$ under the action of $\Is$, then we call $\ss'$ an \emph{orphan} of $\ss$.
	\item The cluster picture of a squarefree polynomial is the collection of all clusters of its roots.
\end{itemize}
\end{definition}

We can now state our first main result which concerns the functionality of cluster pictures; the proof of this can be found in the appendices.

\begin{theorem}[=Theorem \ref{mainpf}]
\label{main}
	Assume the inertia group $I$ is tame.
	\begin{enumerate}
		\item[]
		\item $\lcm_{\ss} \denom \ds = |I|$, where the $\lcm$ runs over all proper clusters.
		\item Fix a cluster $\ss$. Then the orbits of non-orphans of $\ss$ under $\Is$ all have equal length.
		\item Let $\ss'$ be a child of $\ss$ which is not an orphan. Then the length of its orbit under $\Is$ is $\denom(\ds [I:\Is])$.
		\item Let $\ss$ be a cluster. Then $$[I:\Is] = \lcm_{\ss \subsetneq \ss'} \denom d_{\ss'}^*,$$ where for a cluster $\ss' \supsetneq \ss$, $$d_{\ss'}^*=\begin{cases}
		1 &\text{ if the child of $\ss'$ containing $\ss$ is an orphan}, \\
		d_{\ss'} &\text{ else}.
	\end{cases}$$
	\end{enumerate}
\end{theorem}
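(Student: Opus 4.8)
The plan is to analyze the inertia action on $\overline{K}$ through the tame character and track how depths of clusters arise from roots of $f$. Since $I$ is tame, it is cyclic of order $e = |I|$, generated by some $\sigma$ acting on a uniformizer $\pi^{1/e}$ of the splitting field via a primitive $e$-th root of unity. The key observation is that for any two roots $r, r' \in \cR$, the difference $r - r'$ has valuation $v(r-r') \in \frac{1}{e}\zz$, and more precisely, the $I$-action on $r - r'$ (up to higher-order terms) is through multiplication by a root of unity raised to the power $e \cdot v(r-r')$. This lets one read off $\denom v(r-r')$ as the size of the $I$-orbit of the leading term of $r - r'$, which in turn controls the orbit sizes of roots and clusters.

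First I would set up the link between depths and ramification: for a proper cluster $\ss$, choose $r, r' \in \ss$ realizing the minimum, so $\ds = v(r-r')$, and show $\Is$ acts on the "leading coefficient" of $r - r'$ (in a suitable Puiseux-type expansion) through a faithful character of order $\denom \ds$ on the appropriate quotient — establishing that $\denom \ds \mid |I|$ for every $\ss$, and that collectively these account for all of $|I|$ because the splitting field $K(\cR)$ is generated by the roots, whose mutual differences have valuations with denominators whose lcm must be $e$ (this is essentially the statement that the ramification of $K(\cR)/K$ is visible in the root configuration). That proves part (i). For parts (ii) and (iii), I would fix $\ss$ and look at $\Is$ acting on the set of children; a non-orphan child $\ss'$ has a nontrivial orbit, and the stabilizer $I_{\ss'} \leqslant \Is$ has index equal to the orbit length. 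The claim is that this index equals $\denom(\ds[I:\Is])$: the point is that moving $\ss'$ within $\ss$ is detected by the action on the leading term at depth $\ds$ after normalizing by the $[I:\Is]$-fold "unrolling" needed to even stabilize $\ss$; concretely, $\Is$ acts on the relevant residue disc of relative depth $\ds$ through a character whose order is exactly $\denom(\ds [I:\Is])$, and the orphan is precisely the child fixed by the whole of $\Is$ (the "center"), while all non-orphan children lie in free-ish orbits of this character, hence equal length.

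For part (iv), I would argue by an inductive/telescoping computation of $[I:\Is]$ from the top of the cluster picture downward. The top cluster $\cR$ (assuming $\deg f \geqslant 2$, with the usual conventions for the cluster containing all roots) is $I$-stable, so $[I:I_{\cR}] = 1$. Descending from a cluster $\ss'$ to its child $\ss$, the index multiplies by the orbit length of $\ss$ under $I_{\ss'}$, which by part (iii) is $\denom(d_{\ss'}[I:I_{\ss'}])$ if $\ss$ is not an orphan of $\ss'$, and is $1$ if $\ss$ is an orphan. Taking the product over the chain $\ss \subsetneq \cdots \subsetneq \cR$ and using that $\denom(a \cdot n) \mid \lcm$-type relations collapse compatibly (since $[I:I_{\ss'}]$ is itself built from the same data at the previous step), one obtains that $[I:\Is]$ is the lcm of the $\denom d_{\ss'}^*$ over all $\ss' \supsetneq \ss$, where the starred depth is set to $1$ exactly when the intermediate child is an orphan, so that that step contributes nothing. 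The main obstacle will be making the telescoping rigorous: the naive product of orbit lengths overcounts compared to the lcm, so I must show that at each stage the new denominator $\denom(d_{\ss'}[I:I_{\ss'}])$ contributes only its "new part" relative to the previously accumulated index — equivalently, that $[I:I_{\ss}] = \lcm\bigl([I:I_{\ss'}], \denom d_{\ss'}^* \bigr)$ at each step, which requires knowing that $[I:I_{\ss'}]$ already divides $|I|$ and interacts with $\denom(d_{\ss'} [I:I_{\ss'}])$ exactly as an lcm. I expect to handle this via the explicit cyclic structure of $I$ and a careful bookkeeping of which power of $\sigma$ fixes $\ss$, treating the orphan case separately since there the relevant character is trivial.
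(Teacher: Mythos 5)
Your route is the one the paper takes: tameness makes $I$ cyclic, generated by an element $i$ acting on $\varpi=\pi_K^{1/e}$ through a primitive $e$-th root of unity; expanding a root as $\sum_j a_j\varpi^j$, membership of the cluster $\ss$ of depth $\ds=N/e$ is congruence mod $\varpi^N$ and membership of a child is congruence mod $\varpi^{N+1}$, so the child is governed by the coefficient $a_N$, the orphan is exactly the child with $a_N=0$, and $\Is=\langle i^{[I:\Is]}\rangle$ moves $\varpi^N$ by a root of unity of order $e/\gcd(e,[I:\Is]N)=\denom(\ds[I:\Is])$ — this is precisely the paper's proof of (ii) and (iii). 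For (iv) your telescoping is also the paper's argument, and your worry about ``overcounting'' is unfounded: since $\Is\leqslant I_{P(\ss)}$ the indices multiply, and $n\cdot\denom(dn)=\lcm(n,\denom d)$, so the product of orbit lengths down a chain is literally the iterated lcm (this identity is the paper's Corollary A.7, deduced from (iii) exactly as you intend); the orphan steps contribute a factor $1$ and are omitted, giving the starred depths. One small inaccuracy: a non-orphan child need not have a nontrivial orbit (if $\ds[I:\Is]\in\zz$ every child is fixed and there is no orphan), but this does not affect the argument since the formula then gives orbit length $1$.

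The one genuine gap is the forward direction of (i), i.e.\ $|I|\mid\lcm_{\ss}\denom\ds$. You assert that the pairwise differences of roots ``have valuations with denominators whose lcm must be $e$'' because $K(\cR)$ is generated by the roots, but that is exactly the statement to be proved and it does not follow formally from generation: the depths only see differences, and one must argue that Galois-stability of $\cR$ forces all of the ramification into the differences. The paper does this via a per-root lemma: if $r$ has $I$-orbit of length $n$, then $n=\lcm_{1\leqslant k\leqslant n-1}\denom v(r-i^k(r))$, proved prime by prime (for $n=q^m$, every nonzero term of $r-i^{q^{m-1}}(r)$ has valuation with denominator exactly $q^m$), and then, for each prime power $q^a$ exactly dividing $e$, a root of orbit length divisible by $q^a$ is chosen (faithfulness of the cyclic action on $\cR$), so that the smallest cluster containing $r$ and $i^{k}(r)$ has depth with denominator divisible by $q^a$. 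Note the paper's remark that for $r=p^{1/2}+p^{2/3}$ no single difference has denominator equal to the orbit length $6$, so some such prime-by-prime lcm argument is genuinely needed; your leading-term mechanism does supply it once run on the differences $r-i^k(r)$ for suitable $k$, but as written this step is asserted rather than proved.
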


\begin{remark}
	Note that $d_{\ss'}^*$ depends on the cluster $\ss$ and not just on $\ss'$.
\end{remark}

\begin{corollary}
	Assume the inertia group $I$ is tame. Then the cluster picture (with depths) of a polynomial $f$ determines $H^1_{\acute{e}t}(C/\overline{K},\qq_{\ell})$ as an $I$-representation where $C/K: y^2=f(x)$ is the corresponding hyperelliptic curve; the representation is explicitly given in Theorem \ref{repthm}.
\end{corollary}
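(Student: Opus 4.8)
The plan is to combine two ingredients: the description of $H^1_{\acute{e}t}(C/\overline{K},\qq_{\ell})$ due to Dokchitser--Dokchitser--Maistret--Morgan \cite{DDMM}, which expresses this $I$-representation in terms of the cluster picture of $f$ \emph{together with} the action of $I$ on clusters and roots; and Theorem \ref{main}, which shows that precisely this Galois-theoretic data is itself a function of the cluster picture with depths. Once both are in hand, the corollary is a matter of substitution.

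First I would unwind the relevant formula from \cite{DDMM}. After passing to a tame extension $L/K$ over which $C$ becomes semistable, the $I$-action on $H^1_{\acute{e}t}$ is assembled from the cluster picture by an explicit recipe whose only inputs beyond the abstract cluster picture with depths are: the order $|I|$; for each proper cluster $\ss$ the stabiliser $\Is$ and the index $[I:\Is]$; for each $\ss$ the orbit lengths of its children under $\Is$, together with the distinguished orphan/fixed children; and the combinatorial type of each cluster (even, odd, \"ubereven, twin, \dots), which is already part of the cluster picture. I would isolate each of these inputs explicitly so that the substitution in the next step is mechanical.

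Next I would invoke Theorem \ref{main}: part (1) gives $|I| = \lcm_{\ss}\denom\ds$; part (4) gives each $[I:\Is]$, hence $|\Is|$; and parts (2)--(3) give the common orbit length $\denom(\ds[I:\Is])$ of the non-orphan children of each cluster, the orphans being exactly those children excluded from those statements. Substituting these expressions into the \cite{DDMM} recipe turns it into a formula in the cluster picture with depths alone; writing out this substituted formula is exactly the content of Theorem \ref{repthm}, and the corollary follows at once.

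The main obstacle is verifying that the \cite{DDMM} description is genuinely insensitive to any finer Galois data than what Theorem \ref{main} supplies --- that it sees the $I$-action only through orbit lengths, stabiliser orders and the position of orphans, and not through a particular permutation of a given cycle type nor through the arithmetic of $L/K$ beyond its tameness. Since $H^1_{\acute{e}t}$ is pinned down only up to $I$-isomorphism and a tame $I$ here is cyclic --- so its representations are determined by character values on a generator, which are sums of roots of unity read off from orbit lengths --- this reduces to careful bookkeeping of the induced representations occurring in the recipe. The one point requiring care is the quadratic character by which $H^1_{\acute{e}t}$ can differ from the naive cluster contribution, arising from the splitting behaviour of the hyperelliptic cover and the leading coefficient of $f$; in the tame setting this character is either unramified, hence trivial on $I$, or dictated by the parity/\"ubereven data already recorded in the cluster picture, so it presents no real difficulty.
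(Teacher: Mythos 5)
Your proposal is correct and follows the same route as the paper: the corollary is exactly the combination of Theorem \ref{main} (which recovers $|I|$, the stabilisers $\Is$, the orbit lengths of children and the orphans from the cluster picture with depths) with the Dokchitser--Dokchitser--Maistret--Morgan formula of Theorem \ref{M2D2rep}, whose remaining inputs ($\ss^{odd}$, $\mu_{\ss}$, $\lambda_{\ss}$, hence $\gamma_{\ss}$ and $\es$ up to their orders, which suffices since tame $I$ is cyclic) are read off from the depths, the explicit substituted formula being Theorem \ref{repthm}. No genuinely different ideas or gaps to flag.
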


As an example of their versatility, we show that the cluster picture of an elliptic curve also determines their Kodaira type.

\begin{theorem}[=Theorem \ref{kodthm}]
	Let $E/K:y^2=f(x)$, $\deg f=3$, be an elliptic curve and suppose that $p \geqslant 5$. Then the cluster picture of $f$ uniquely determines the Kodaira type of $E/K$.
\end{theorem}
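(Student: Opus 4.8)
The plan is to write down an explicit function from the cluster picture of a squarefree cubic to a Kodaira type, and check that it computes the type of $E/K:y^2=f(x)$; the rôle of $p\geqslant 5$ is that the reduction is then tame, so that the Kodaira type is governed by the coarse invariants below (cf.\ Tate's algorithm and the theory of N\'{e}ron models). We may assume $f$ is monic, since a $K$-rescaling of $x$ only translates every depth by a fixed amount; then $\Delta_E=16\operatorname{disc}(f)$, so $v(\Delta_E)=2\sum v(r-r')$ with the sum over the three pairs $\{r,r'\}\subseteq\cR$, and $c_4=8\sum(r-r')^2$. For $p\geqslant 5$ the Kodaira type of $E$ is determined by the sign of $v(j)$ together with $v(\Delta_E^{\min})$: if $v(j)\geqslant 0$ then $E$ is potentially good and $v(\Delta_E^{\min})\in\{0,2,3,4,6,8,9,10\}$ corresponds bijectively to $I_0,II,III,IV,I_0^*,IV^*,III^*,II^*$; if $v(j)<0$ then $E$ is potentially multiplicative of type $I_n$ or $I_n^*$ with $n=-v(j)$, the two being told apart by $v(\Delta_E^{\min})\in\{n,n+6\}$. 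Since $\deg f=3$, the only clusters are $\cR$ and, exactly when the closest pair of roots is strictly closer than the other two distances, a single proper subcluster $\ss$ of size $2$; write $d$ for the depth of $\cR$ and, when it exists, $d'$ for that of $\ss$. By Theorem \ref{main}(i) $|I|=\lcm(\denom d,\denom d')$, and because a cyclic group of order $3$ cannot stabilise $\ss$ one finds $(\denom d,\denom d')\in\{(1,1),(1,2)\}$ whenever $\ss$ exists; in particular $d\in\zz$ and $d'-d\in\tfrac12\zz_{>0}$ in that case.

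Suppose first $\ss$ is absent. Then all three pairwise root distances equal $d$, so $v(\Delta_E)=6d$, while every summand of $8\sum(r-r')^2$ has valuation $2d$, whence $v(c_4)\geqslant 2d$ and $v(j)=3v(c_4)-v(\Delta_E)\geqslant 0$: $E$ is potentially good. Any integral model differs from the minimal one by a substitution changing $v(\Delta)$ by a multiple of $12$, and $v(\Delta_E^{\min})<12$ for potentially good reduction with $p\geqslant 5$; hence $v(\Delta_E^{\min})=6d\bmod 12$, and the Kodaira type is read off from the list above according as this residue is $0,2,3,4,6,8,9$ or $10$. (This is consistent: $\denom d=|I|\in\{1,2,3\}$ forces $6d\bmod 12$ into precisely that set.)

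Suppose now $\ss$ is present, so the three pairwise distances are $d',d,d$ and $v(\Delta_E)=2(d'+2d)$. Of the summands of $8\sum(r-r')^2$, the two of valuation $2d$, being squares of root differences that themselves differ by a root difference of valuation $d'>d$, have equal leading coefficients and so do not cancel; hence $v(c_4)=2d$ exactly, so $v(j)=2(d-d')<0$ and $E$ is potentially multiplicative, with $n:=-v(j)=2(d'-d)\in\zz_{>0}$ and $v(\Delta_E)=n+6d$. As $v(\Delta_E^{\min})\equiv v(\Delta_E)\pmod{12}$ lies in $\{n,n+6\}$, we get $6d-12\lfloor d/2\rfloor\in\{0,6\}$, i.e.\ $v(\Delta_E^{\min})=n$ and $E$ has type $I_n$ if $d$ is even, while $v(\Delta_E^{\min})=n+6$ and $E$ has type $I_n^*$ if $d$ is odd.

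In every case the only inputs used — whether $\cR$ has a child of size $2$, and the depths $d$ and $d'$ — belong to the cluster picture of $f$, which proves the theorem. The step needing the most care is the valuation bookkeeping around the minimal model; in the potentially multiplicative case it is exactly what separates $I_n$ from $I_n^*$, and reassuringly the answer is simply the parity of $d$, matching the fact that the ramified quadratic twist of $E$ shifts all depths by $1$ and interchanges $I_n\leftrightarrow I_n^*$. In the potentially good case one should also record that the exceptional configurations in which $v(c_4)>2d$ — those with the three root residues in the ratio $1:\zeta_3:\zeta_3^{2}$ — do not affect the outcome, since $v(\Delta_E^{\min})$ depends only on $v(\Delta_E)\bmod 12$. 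Finally I would check the recipe against the families $y^2=x^3+\pi^{k}$ and $y^2=x^3+\pi^{k}x$, which between them realise $I_0^*$ and $II,III,IV,IV^*,III^*,II^*$ and confirm agreement with the classical table.
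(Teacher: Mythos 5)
Your argument is correct and follows essentially the paper's own route: you split according to the two possible cluster configurations of a cubic (equidistant roots versus a single size-two child), compute $v(\Delta_E)$, $v(c_4)$ and $v(j)$ from the pairwise root distances, and read off the type from the standard $p\geqslant 5$ tables; this is exactly the content of Theorems \ref{ecgood} and \ref{ecmult} together with Remark \ref{kodmult}, with your mod-$12$ bookkeeping replacing the paper's explicit passage to a minimal Weierstrass model and your formula $n=2(d'-d)$ making Remark \ref{kodmult} explicit. One step is under-justified: in the two-cluster case you infer $(\denom d,\denom d')\in\{(1,1),(1,2)\}$ from Theorem \ref{main}i plus the exclusion of an order-$3$ stabiliser of $\ss$, but that only yields $|I|\leqslant 2$ and so still allows $\denom d_{\cR}=2$; integrality of $d_{\cR}$ is essential downstream (both for $n\in\zz$ and for the parity test $6d\equiv 0,6\bmod{12}$), and the correct justification is Theorem \ref{main}iii: both children of $\cR$ are $I$-stable (the size-two child is the unique cluster of its size), hence neither is an orphan and their orbit length $1$ equals $\denom d_{\cR}$ --- precisely the opening step of the paper's proof of Theorem \ref{ecmult}. (Alternatively, $d_{\cR}=v\bigl((r_1-r_3)+(r_2-r_3)\bigr)$ is the valuation of an inertia-invariant, hence integral, element.) Finally, your reduction ``we may assume $f$ monic'' needs more care than ``rescaling only translates depths'': a leading coefficient of odd valuation is a ramified quadratic twist that the cluster picture does not record, and your recipe is (correctly) not invariant under shifting all depths by $1$; so, as in the paper (Lemma \ref{potred} and Theorem \ref{ecmult} assume monicity), the monic normalisation is a genuine hypothesis rather than a harmless reduction.
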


Cluster pictures can be defined more abstractly in a combinatorial fashion and hence one can ask which cluster pictures arise from polynomials, and moreover whether one can construct an example of such a polynomial. This question is critical if one wishes for an explicit classification result for hyperelliptic curves via cluster pictures, in addition to enabling the study of distributions of local invariants. We are able to prove that the conditions imposed by Theorem \ref{main} are generally sufficient in the tame case for the cluster picture to arise from a polynomial; we provide the details in \S \ref{polyS}.

\begin{theorem}[=Theorem \ref{polytype}]
	Let $\Sigma$ be a cluster picture and suppose there is a cyclic automorphism of $\Sigma$ which satisfies Theorem \ref{main}i-iv. Then there exists a polynomial $f \in K[x]$ such that $\Sigma$ is isomorphic to the cluster picture associated to $f$ for $p$ sufficiently large. Moreover, such an $f$ can be constructed explicitly via Construction \ref{polycons}.
\end{theorem}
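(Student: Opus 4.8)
\noindent\emph{Sketch of the proof strategy.}
The plan is to realise the pair $(\Sigma,\sigma)$ by an explicit recursive construction (this is Construction \ref{polycons}): we build a finite set $\cR'\subset\overline K$ of prospective roots inside a fixed tame Galois extension of $K$, with the cyclic automorphism $\sigma$ (say of order $n$) acting as a generator of tame inertia, and we set $f=\prod_{r\in\cR'}(x-r)$. By Theorem \ref{main}(i) applied to $\sigma$ we have $n=\lcm_{\ss}\denom\ds$. Assume $p$ is large enough that $p\nmid n$ and that the residue-field data required below exist. Fix a uniformiser $\pi$ of $K$, a primitive $n$-th root of unity $\zeta_n$ in the maximal unramified extension $K^{\mathrm{nr}}$, and compatible radicals $\pi^{1/m}$ for $m\mid n$ inside $M:=K(\mu_n,\pi^{1/n})$. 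Then $M/K$ is Galois, totally tamely ramified of degree $n$ over $K(\mu_n)$, and its inertia subgroup is generated by the automorphism $\gamma$ with $\gamma(\pi^{1/n})=\zeta_n\pi^{1/n}$; we will identify $\gamma$ with $\sigma$.

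The recursion runs down the cluster tree of $\Sigma$, equivariantly for the prospective inertia action. Choose a transversal of each $\langle\sigma\rangle$-orbit of clusters; assign the value $0$ to the top cluster; and, given the value $z_{\ss''}$ of a transversal cluster $\ss''$ and a child $\ss$ of $\ss''$, set $z_{\ss}=z_{\ss''}$ if $\ss$ is the orphan of $\ss''$ and $z_{\ss}=z_{\ss''}+\xi_{\ss}\,\pi^{d_{\ss''}}$ otherwise, where the $\xi_{\ss}$ are distinct units of $M$ (taken to be roots of unity on each non-trivial $\Stab_{\langle\sigma\rangle}(\ss'')$-orbit) indexed by the non-orphan children of $\ss''$, chosen so that a generator of $\Stab_{\langle\sigma\rangle}(\ss'')$ permutes them in orbits of length $\denom\bigl(d_{\ss''}\,[\langle\sigma\rangle:\Stab_{\langle\sigma\rangle}(\ss'')]\bigr)$. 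This is possible because $\gamma$ multiplies $\pi^{d_{\ss''}}$ by a primitive $(\denom d_{\ss''})$-th root of unity, because the index $[\langle\sigma\rangle:\Stab_{\langle\sigma\rangle}(\ss'')]$ is the integer prescribed by Theorem \ref{main}(iv), and because by parts (ii)--(iii) all non-orphan children of $\ss''$ are required to lie in orbits of this single common length; one then transports $z_{\ss}$ over the remaining clusters in the $\langle\sigma\rangle$-orbit of $\ss$ by applying $\sigma$. Carrying the recursion down to the singleton clusters yields $\cR'$, which is $\gamma$-stable by construction; choosing the $\xi$-data compatibly with the Frobenius action (possible for $p$ large) makes $\cR'$ stable under $\Gal(M/K)$, whence $f=\prod_{r\in\cR'}(x-r)\in K[x]$.

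It then remains to verify that the cluster picture with depths of $f$ is isomorphic to $\Sigma$ and that the inertia of $K(\cR')/K$ is $\langle\gamma\rangle$. For two leaves $r,r'$ whose smallest common cluster in $\Sigma$ is $\ss$, the two branches of the recursion agree down to $z_{\ss}$ and then diverge into distinct children of $\ss$, so $r-r'=\xi\,\pi^{d_{\ss}}+(\text{terms of strictly larger valuation})$ with $\xi$ a nonzero residue-field element once $p$ exceeds a bound depending only on $\Sigma$ (ruling out the vanishing of the finitely many relevant combinations of roots of unity); hence $v(r-r')=d_{\ss}$. This at once shows that $f$ is squarefree, that its clusters and depths are exactly those of $\Sigma$, and that no spurious clusters appear. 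For the inertia, $\gamma^{j}$ stabilises the constructed cluster $\ss$ iff it fixes every term $\xi_{\ss'}\pi^{d_{P(\ss')}}$ appearing in $z_{\ss}$, i.e.\ iff $\denom d_{\ss'}\mid j$ for every strict ancestor $\ss'$ of $\ss$ whose child containing $\ss$ is not an orphan; thus $[\langle\gamma\rangle:\Is]=\lcm_{\ss\subsetneq\ss'}\denom d_{\ss'}^{*}$, matching Theorem \ref{main}(iv), while the depths collectively use up all of $\langle\gamma\rangle$ by Theorem \ref{main}(i), so $\langle\gamma\rangle$ is precisely the inertia group, it is tame, and it is identified with $\langle\sigma\rangle$.

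The hardest part will be the bookkeeping around orphans and the equivariance of the recursion: one must check that setting the orphan's value equal to its parent's both forces the prescribed picture below it and is consistent with $\Is=I_{P(\ss)}$ across an orphan step (exactly the convention $d_{\ss'}^{*}=1$ of Theorem \ref{main}(iv)); that the non-orphan children of each cluster really do admit a partition into $\langle\sigma\rangle$-orbits of the single common length dictated by parts (ii)--(iii); and that the orphan's descendants and its siblings' descendants end up at mutual distance exactly $d_{\ss}$. This is precisely where the hypothesis that $\sigma$ satisfies all of Theorem \ref{main}i--iv is used, as an internal consistency check ensuring that every orbit length and index the construction calls for is a positive integer dividing $n$ and that these fit together coherently. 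The remaining, more routine, ingredients --- the descent from the $M$-model to a polynomial over $K$, squarefreeness, and the absence of extra clusters --- are where ``$p$ sufficiently large'' genuinely enters, via Frobenius-compatibility of the $\xi$-data and a bound on the combinatorial size of $\Sigma$.
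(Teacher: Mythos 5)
Your proposal is correct and essentially reproduces the paper's own argument (Construction \ref{polycons} together with Lemma \ref{polylem}): roots are built as sums $\sum_{\ss}(\text{unit})\,\pi_K^{\ds}$ along chains in the cluster tree, with orphans receiving coefficient $0$, coefficients chosen equivariantly for the generator of tame inertia acting by $\pi_K^{1/n}\mapsto\zeta_n\pi_K^{1/n}$, distinctness of residues (for $p$ large) giving the prescribed depths, and Hypothesis H (Theorem \ref{main}i--iv) guaranteeing the consistency of the equivariant/orphan bookkeeping. The only divergence is in the descent to $K$: you ask for Frobenius-compatible unit data to make the root set $\Gal(M/K)$-stable, whereas the paper instead takes minimal polynomials over $K$ of orbit representatives and reduces to the case $\zeta_e\in K$ using unramified invariance of cluster pictures --- a cleaner route that avoids the (plausible but unverified) Frobenius-compatibility check.
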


\noindent \textbf{Acknowledgements.} The author would like to thank Vladimir Dokchitser and Alex Betts for helpful discussions and the Max Planck Institute in Bonn for a wonderful working environment.

\section{Cluster pictures of polynomial type}
\label{polyS}

Cluster pictures can be defined abstractly in a purely combinatorial manner. Given a squarefree polynomial, we have already seen how to construct the corresponding cluster picture. This affords a map $$\psi: \{ \text{squarefree polynomials over }K \} \rightarrow \{ \text{cluster pictures} \}.$$ Note that $\psi$ is not injective since $\psi(f(x))=\psi(f(x-k))=\psi(kf(x))$ for all $k \in K^{\times}$. Instead, we concern ourselves with the image of $\psi$; there are two questions we would like to consider in this section:

\begin{enumerate}
\item When is a cluster picture in the image of $\psi$?
\item If a cluster picture $\Sigma$ is in the image of $\psi$, can we construct a polynomial in the preimage of $\Sigma$?
\end{enumerate}

To facilitate our answers to these questions, we should first define what a cluster picture is in the general combinatorial sense.

\begin{definition}
Let $X$ be a finite set, $\Sigma$ a collection of nonempty subsets of $X$; elements of $\Sigma$ are called \emph{clusters}. Attach a depth $d_{\ss} \in \mathbb{Q}$ to each cluster $\ss$ with $|\ss|>1$. Then $\Sigma$ (or rather $(\Sigma,X,d)$) is a \emph{cluster picture} if
\begin{enumerate}
\item Every singleton (``root'') is a cluster, as is $X$.
\item Two clusters are either disjoint or one is contained in the other.
\item $d_{\mathfrak{t}}>d_{\ss}$ is $\mathfrak{t} \subsetneq \ss$.
\end{enumerate}
Moreover, two cluster pictures $(\Sigma,X,d)$ and $(\Sigma',X',d')$ are isomorphic if there is a bijection $\phi:X \rightarrow X'$ which induces a bijection $\Sigma \rightarrow \Sigma'$ with $d_{\ss} = d'_{\phi(\ss)}$.

If $f$ is a polynomial, then the cluster picture $\Sigma_f$ is the collection of all clusters of roots of $f$, as in Definition \ref{clusterdef}.
\end{definition}

\begin{definition}
	Let $\Sigma$ be a cluster picture. Then $\Sigma$ is of \emph{polynomial type over $K$} if there exists a squarefree polynomial $f \in K[x]$ such that $\Sigma$ is isomorphic to $\Sigma_f$.
\end{definition}

To simplify our problem, we shall assume that there is only a ``tame action'' on the cluster picture; i.e. only answer our question when $\Sigma \cong \Sigma_f$, with $f$ a polynomial whose splitting field is tamely ramified. We write $\Sym(X)$ for the symmetric group with underlying set $X$. \\

\noindent \textbf{Hypothesis H.} Let $(\Sigma,X,d)$ be a cluster picture. We say $\Sigma$ satisfies Hypothesis H if there exists $c \in \Sym(X)$ which induces an automorphism of $\Sigma$ such that:
\begin{itemize}
	\item The orbits of non-orphan children of a proper cluster $\ss$ under $c$ all have length equal to $\denom(\ds [C:\Stab_C(\ss)])$ under $\Stab_C(\ss)$, where $C=\langle c \rangle$ is the subgroup generated by $c$;
	\item Let $\ss \in \Sigma$. Then $$[C:\Stab_C(\ss)]=\lcm_{\ss \subsetneq \ss'} \denom d_{\ss'}^*$$ where for a cluster $\ss' \supsetneq \ss$, $$d_{\ss'}^*=\begin{cases}
		1 &\text{ if the child of $\ss'$ containing $\ss$ is an orphan}, \\
		d_{\ss'} &\text{ else}.
	\end{cases}$$
\end{itemize}

\begin{remark}
	Note that the above conditions imply that $|C|=\lcm_{\ss \in \Sigma} \ds$ (where the $\lcm$ runs over all proper clusters); this provides a useful preliminary criterion to check if a given cluster picture satisfies Hypothesis H.
\end{remark}

\begin{theorem}
\label{polytype}
	Let $(\Sigma,X,d)$ be a cluster picture and suppose $p>|X|$. Then $\Sigma$ is of polynomial type over $K$ if and only if $\Sigma$ satisfies Hypothesis H.
\end{theorem}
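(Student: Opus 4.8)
The plan is to prove both directions separately, the necessity being essentially a repackaging of Theorem~\ref{main}.

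\textbf{Necessity.} Suppose $\Sigma$ is of polynomial type, so $\Sigma \cong \Sigma_f$ for some squarefree $f \in K[x]$ whose splitting field is tamely ramified (the tameness is forced since $p > |X| = \deg f$ means $p$ cannot divide the ramification degree of the splitting field). Take $c$ to be the image in $\Sym(\cR) \cong \Sym(X)$ of a topological generator of the (cyclic, since tame) inertia group $I$; then $C = \langle c \rangle = I$ as a permutation group on $\cR$, and $\Stab_C(\ss) = \Is$. Now Theorem~\ref{main}iii gives exactly the orbit-length statement in the first bullet of Hypothesis~H, and Theorem~\ref{main}iv gives the index formula in the second bullet. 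So $\Sigma$ satisfies Hypothesis~H. (The remark's consistency check $|C| = \lcm_\ss d_\ss$ is then just Theorem~\ref{main}i.)

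\textbf{Sufficiency.} This is the substantive direction and is where Construction~\ref{polycons} enters. Given $(\Sigma, X, d)$ satisfying Hypothesis~H with witnessing automorphism $c$ and $C = \langle c \rangle$, we must manufacture an $f \in K[x]$ whose roots in $\overline{K}$ realise both the combinatorial nesting-with-depths of $\Sigma$ and the Galois/inertia action prescribed by $c$. The natural approach is recursive on the cluster tree, from the top cluster $X$ downward: at each proper cluster $\ss$ we must choose a ``centre'' $z_\ss \in \overline{K}$ and, for each orbit of children under $\Stab_C(\ss)$, a uniformizer-type element of the right valuation $d_\ss$ generating a cyclic extension of the appropriate (tame) degree, so that a generator of $\Gal$ permutes the conjugates exactly as $c$ does. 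Concretely one builds roots of the shape $z_\ss + \pi^{d_\ss}(\zeta + \cdots)$ where $\pi$ is a uniformizer, $\zeta$ is a root of unity chosen so that the prescribed orbit lengths (namely $\denom(d_\ss[C:\Stab_C(\ss)])$) come out correctly, and the ``$\cdots$'' encodes the finer cluster structure inside each child, obtained by recursion. The condition $p > |X|$ guarantees that all the cyclic tame extensions of the needed degrees exist and that the relevant roots of unity are available and distinct modulo the maximal ideal, so that no unwanted collisions of roots occur and the depths are exactly as specified rather than accidentally larger.

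\textbf{Main obstacle and how to handle it.} The delicate point is \emph{consistency}: the recursion chooses local data at each cluster independently, but the resulting global Galois action must be a single cyclic group $C$ acting compatibly at every level, and the orphan phenomenon must be reproduced faithfully — an orphan child is one that is forced to be stabilised (so contributes trivially, $d_{\ss'}^* = 1$, to the index), and one has to check that the construction produces a fixed child exactly when Hypothesis~H says there should be one, and does not accidentally fix extra children or fail to fix the orphan. This is precisely what the index formula $[C:\Stab_C(\ss)] = \lcm_{\ss \subsetneq \ss'} \denom d_{\ss'}^*$ in the second bullet is engineered to control: it says the stabiliser of $\ss$ is cut out, level by level going up the tree, exactly by the ramification contributed by the non-orphan steps, so the recursive choices glue. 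I would verify by downward induction on the tree that (a) the extension $K(\text{roots in } \ss)/K$ has inertia image the cyclic group of order $[C:\Stab_C(\ss)]$ with generator acting as $c$, and (b) the depth of $\ss$ in the constructed $\Sigma_f$ equals $d_\ss$ and no spurious intermediate cluster appears; the first bullet of Hypothesis~H feeds the inductive step for the child orbits, and the orphan clause handles the base of each fixed branch. The bound $p > |X|$ is used repeatedly to keep everything tame and collision-free. Finally one checks that the explicit recipe of Construction~\ref{polycons} is a faithful implementation of this recursion, which is a bookkeeping matter once the inductive framework is set up.
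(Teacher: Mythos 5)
Your necessity direction is fine and is exactly the paper's: $p>|X|$ forces tameness, $C$ is taken to be the inertia group, and Theorem \ref{main}iii--iv give the two bullets of Hypothesis H. Your sufficiency strategy (roots of the shape $z_{\ss}+\pi_K^{d_{\ss}}(\zeta+\cdots)$, with $p>|X|$ providing tameness and enough distinct residues) is also the same strategy as Construction \ref{polycons}, but as written it has concrete gaps. The first is the orphan mechanism. You say one ``has to check'' that exactly the prescribed children come out fixed, but you give no device that could make a child of $\ss$ fixed by $\Stab_C(\ss)$ when $\denom(d_{\ss}[C:\Stab_C(\ss)])>1$. In the paper this is achieved by decreeing that the term of valuation $d_{P(\ss)}$ is simply \emph{absent} from the roots lying in an orphan $\ss$, i.e.\ $a(r,P(\ss))=0$ if and only if $\ss$ is an orphan of $P(\ss)$, mirroring the $a_N=0$ characterisation of orphans in the appendix; without some such prescription your recursion cannot reproduce orphans at all, and the index formula you invoke would fail for the picture you actually build.

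Second, $p>|X|$ alone does not ensure that ``the depths are exactly as specified rather than accidentally larger'': you need explicit unit conditions on differences of the chosen coefficients at the join of two roots (the paper's condition (ii)(b)), and, because you ultimately need $f\in K[x]$ and will therefore take minimal polynomials of one representative per $C$-orbit, you must also control differences of the form $a(y_1,\ss')-\zeta_e^{k}a(y_2,\ss')$ between a representative and an inertia conjugate of another representative. Verifying this uses that $1-\zeta_e^{ked_{\ss'}}$ is a unit together with a minimality argument, and it is what forces the careful choice of the representative set $Y$ in Construction \ref{polycons}; your sketch imposes nothing at the level of pairs of roots in the same orbit. Third, the ``gluing'' of independently made recursive choices into one global cyclic action is precisely the hard point, and asserting that the index formula ``is engineered to control'' it is not an argument. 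The paper avoids recursion altogether: after an unramified base change to arrange $\zeta_e\in K$ (harmless, since cluster pictures are insensitive to unramified extensions and $K(\alpha(y))/K$ is totally ramified), all roots are placed in the single extension $K(\pi_K^{1/e})$ and one global equivariance constraint $a(c^k\cdot r,c^k\cdot\ss)=\zeta_e^{ked_{\ss}}a(r,\ss)$ is imposed; the well-definedness of this constraint (that $kd_{\ss}\in\zz$ whenever $c^k$ fixes $r$ and the relevant child is not an orphan) is exactly where the second bullet of Hypothesis H enters. These are the missing ideas you would need to supply to turn your plan into a proof.
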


One direction of this theorem is obvious: if $\Sigma$ is of polynomial type, then $C=\langle c \rangle$ can be taken to be the inertia group of the splitting field of $f$ by virtue of Theorem \ref{main}. For the converse, we give an explicit construction of such a polynomial from the orphan cluster picture\footnote{This is a cluster picture with all orphans identified.} and prove that the cluster pictures are isomorphic. Note that identification of orphans is determined at the same time as establishing the automorphism $c$ in Hypothesis H.

\begin{construction}
\label{polycons}
	Let $(\Sigma,X,d)$ be a cluster picture which satisfies Hypothesis H and suppose $p>|X|$. Choose a set of representatives $Y \subset X$ for $X/C$ such that for all clusters $\ss \in \Sigma$, if $\ss \cap Y \neq \emptyset$ then $(c^k \cdot \ss) \cap Y = \emptyset$ whenever $c^k \cdot \ss \neq \ss$. For each $y \in Y$ and proper cluster $\ss \in \Sigma$, choose $a(y,\ss) \in \mathcal{O}_K^{\times} \cup \{0\}$ such that:
	\begin{enumerate}
	\item $a(y,\ss)=0$ if $y \not\in \ss$;
	\item Let $y_1 \neq y_2 \in Y$ and let $\ss'$ be the smallest cluster containing both $y_1$ and $y_2$. Then
	\begin{enumerate}
		\item $a(y_1,\ss)=a(y_2, \ss)$ for all $\ss \supseteq P(\ss')$;
		\item $v(a(y_1,\ss')-\zeta_e^k a(y_2,\ss'))=0$ whenever $[C:\Stab_C(\ss')] \, | \, k$;
	\end{enumerate}
	\item If $y \in \ss$, then $a(y,P(\ss))=0 \Leftrightarrow \ss$ is an orphan of $P(\ss)$.
	\end{enumerate}
	Then we define the polynomial $$f_Y = \prod\limits_{y \in Y} \minpoly_K (\alpha(y)),$$ where $\alpha(y)=\sum\limits_{\ss \text{ proper}} a(y,\ss)\pi_K^{\ds}$ and $\minpoly$ denotes the minimal polynomial.
\end{construction}

\begin{remark}
	Note that in checking criterion (ii)(b) of the $a(y,\ss)$ in the construction above, we can further reduce to checking the roots $\zeta_e^k$ which are in $K$. This is because $K(\zeta_e)/K$ is unramified so if $\zeta_e^k \not\in K$, then $\zeta_e^k a(y_2,\ss')$ is in the residue field of $K(\zeta_e)$ but not $K$ and hence cannot have the same residue as $a(y_1,\ss')$.
\end{remark}

\begin{remark}
	Our construction builds a polynomial with integral roots and hence all the depths should be non-negative. This is not a restriction though: there exists an integer $m$ such $\ds+m \geqslant 0$ for all proper clusters $\ss$ so we can construct a polynomial $f(x) \in \mathcal{O}_K[x]$ whose cluster picture has these new depths; the polynomial $f(\pi_K^{-m}x) \in K[x]$ then has the correct depths.
\end{remark}

We shall first prove a weaker form of Theorem \ref{polytype}, where we impose different conditions on the construction.

\begin{lemma}
\label{polylem}
	Suppose $\Sigma$ is a cluster picture satisfying Hypothesis H where the automorphism $C$ has order $e$ and suppose $p>|X|$. Fix a primitive $e^{\text{th}}$ root of unity $\zeta_e$ and suppose $\zeta_e \in K$. For each $r \in X$ and proper cluster $\ss \in \Sigma$, choose $a(r,\ss) \in \mathcal{O}_K^{\times} \cup \{0\}$ subject to the following constraints:
	\begin{enumerate}
	\item $a(r,\ss)=0$ if $r \not\in \ss$;
	\item Let $r_1 \neq r_2 \in X$ and let $r_1 \wedge r_2$ be the smallest cluster containing both $r_1$ and $r_2$. Then
	\begin{enumerate}
		\item $a(r_1,\ss)=a(r_2, \ss)$ for all $\ss \supseteq P(r_1 \wedge r_2)$;
		\item $a(r_1,r_1 \wedge r_2)-a(r_2,r_1 \wedge r_2) \in \mathcal{O}_K^{\times}$;
	\end{enumerate}
	\item If $r \in \ss$, then $a(r,P(\ss))=0 \Leftrightarrow \ss$ is an orphan of $P(\ss)$;
	\item $a(c^k \cdot r, c^k \cdot \ss)= \zeta_e^{ke\ds}a(r,\ss)$ for all $k \in \zz$.
	\end{enumerate}
	
	Define $$\alpha(r)=\sum\limits_{\ss \text{ proper}} a(r,\ss)\pi_K^{\ds},$$ (note that this is an injective map into $\overline{K}$ by construction) and set $$f:=\prod\limits_{r \in X} (x-\alpha(r)).$$ Then $\Sigma \cong \Sigma_f$.
\end{lemma}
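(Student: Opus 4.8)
The plan is to verify directly that the cluster picture $\Sigma_f$ attached to the polynomial $f$ built from the data $a(r,\ss)$ coincides with $\Sigma$. Since $\alpha$ is by construction an injective map $X \to \overline{K}$, identifying $X$ with the root set $\cR$ of $f$, it suffices to show that for any two roots $r_1 \neq r_2 \in X$ one has
$$ v(\alpha(r_1) - \alpha(r_2)) = d_{r_1 \wedge r_2}, $$
where $r_1 \wedge r_2$ is the smallest cluster of $\Sigma$ containing both. Once this distance formula is established, it is a purely combinatorial fact that the collection of discs cut out by these pairwise valuations reproduces exactly the clusters of $\Sigma$ with their depths: a subset $\ss \subseteq X$ is a cluster of $f$ of depth $d$ precisely when $\ss$ is maximal among subsets all of whose pairwise distances are $\geqslant d$, and condition (iii) in the definition of a cluster picture (depths strictly increase down the tree) guarantees this reconstruction is unambiguous. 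So the entire content of the lemma is the displayed distance identity.

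To prove the identity, write $\alpha(r_1) - \alpha(r_2) = \sum_{\ss \text{ proper}} (a(r_1,\ss) - a(r_2,\ss))\pi_K^{\ds}$. By constraint (ii)(a), the terms for all $\ss \supsetneq r_1 \wedge r_2$, i.e. all $\ss \supseteq P(r_1 \wedge r_2)$, cancel. For $\ss = r_1 \wedge r_2$ itself, constraint (ii)(b) gives $a(r_1,\ss) - a(r_2,\ss) \in \mathcal{O}_K^{\times}$, so that term has valuation exactly $d_{r_1 \wedge r_2}$. Every remaining nonzero term comes from a proper cluster $\ss$ with $\ss \subsetneq r_1 \wedge r_2$ and $r_i \in \ss$ for at least one $i$ (if $r_i \notin \ss$ then $a(r_i,\ss)=0$ by (i)); such an $\ss$ has depth $\ds > d_{r_1\wedge r_2}$ by axiom (iii) of a cluster picture, and since all the $a(r,\ss)$ lie in $\mathcal{O}_K$ the corresponding term has valuation $\geqslant \ds > d_{r_1\wedge r_2}$. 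Hence the minimal-valuation term is the unique one coming from $r_1 \wedge r_2$, and by the ultrametric inequality $v(\alpha(r_1)-\alpha(r_2)) = d_{r_1\wedge r_2}$, as required.

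The remaining point is that the depths $\ds$ must be \emph{integers} for the expression $\pi_K^{\ds}$ to make sense over $K$; but this is exactly why constraint (iv) and the presence of $\zeta_e \in K$ enter, together with the hypothesis $p > |X|$: one reduces to the case of integral depths by the rescaling argument of the preceding remark (replacing $\ds$ by $\ds + m$ throughout and then applying $x \mapsto \pi_K^{-m}x$), and constraint (iv) ensures that the Galois/inertia action permutes the $\alpha(r)$ compatibly with $c$, so that $f$ indeed has coefficients in $K$ — without this the $\alpha(r)$ would not be a Galois-stable set and $f \notin K[x]$. I expect the main obstacle to be precisely the bookkeeping in this last step: checking that constraint (iv) is consistent with (i)--(iii) (so that a valid choice of the $a(r,\ss)$ exists at all) and that it forces $c$ to act on $\{\alpha(r)\}$ exactly as the given automorphism of $\Sigma$, which in turn pins down that $\Sigma_f$ has the same automorphism-compatible structure; the distance computation itself, by contrast, is short and robust.
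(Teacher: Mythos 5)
Your central computation---that $v(\alpha(r_1)-\alpha(r_2))=d_{r_1\wedge r_2}$, via (ii)(a) cancelling every term coming from a cluster containing $P(r_1\wedge r_2)$, (ii)(b) giving a unit coefficient at $r_1\wedge r_2$ itself, and strictly larger depths for all smaller clusters---is correct and is exactly the paper's argument for the depth-preserving bijection of cluster pictures. But the steps you defer or gloss over are where the actual content of the lemma sits, and one of your claims is false. There is no reduction to integral depths: replacing $d_{\mathfrak{s}}$ by $d_{\mathfrak{s}}+m$ and substituting $x\mapsto\pi_K^{-m}x$ shifts depths by integers only and cannot remove their denominators, which encode precisely the ramification the construction is meant to produce (the remark you invoke concerns non-negativity, not integrality). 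Nor is any such reduction needed: by Hypothesis H one has $\operatorname{denom} d_{\mathfrak{s}}\mid e=|C|$, so $\pi_K^{d_{\mathfrak{s}}}:=(\pi_K^{1/e})^{e d_{\mathfrak{s}}}$ is a well-defined element of the tame extension $K(\pi_K^{1/e})$, and $\alpha(r)$ is meant to live there, not in $K$.

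Two genuine gaps remain. First, you must check that constraints (i)--(iv) are mutually consistent, i.e.\ that a valid choice of the $a(r,\mathfrak{s})$ exists: if $c^k\cdot r=r$ and $c^k\cdot\mathfrak{s}=\mathfrak{s}$ with the child of $\mathfrak{s}$ containing $r$ not an orphan, then (iv) forces $\zeta_e^{ke d_{\mathfrak{s}}}=1$, i.e.\ $k d_{\mathfrak{s}}\in\mathbb{Z}$; this is exactly where Hypothesis H enters (one needs $[C:\operatorname{Stab}_C(r)]=\operatorname{lcm}_{\mathfrak{s}\ni r}\operatorname{denom} d_{\mathfrak{s}}^*$ to divide $k$), and the paper devotes the first paragraph of its proof to it. Second, Galois-stability of $\{\alpha(r)\}$ is not automatic from (iv) as you state it: one must identify the action by noting that, since $\zeta_e\in K$, the extension $K(\pi_K^{1/e})/K$ is cyclic and totally ramified with inertia group $I$ generated by $i:\pi_K^{1/e}\mapsto\zeta_e\pi_K^{1/e}$, and then (iv) says precisely $\alpha(c^k\cdot r)=i^k(\alpha(r))$, whence $f$ is $I$-invariant and hence lies in $\mathcal{O}_K[x]$ (and moreover the inertia action on the roots is the image of $C$ under $\alpha$, which the subsequent theorem relies on). Without these two steps---which you explicitly set aside as ``bookkeeping''---the lemma is not proved; the distance identity alone does not even show that $\Sigma_f$ is the cluster picture of a polynomial over $K$.
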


\begin{remark}
\label{primecon}
	We use the assumption $p>|X|$ which ensures that the residue field is sufficiently large for constraint (ii)(b), as well as guaranteeing that the action is ``tame''; for an individual cluster picture, this may be possible for smaller $p$.
\end{remark}

\begin{proof}
	To ensure that this construction is well-defined, we should check that if $c^k \cdot r= r$, then $a(c^k \cdot r, c^k \cdot \ss)=a(r,\ss)$ for all $\ss$, i.e. that $k\ds \in \zz$ whenever $r \in \ss$ is such that the child of $\ss$ containing $r$ is not an orphan. As $c^k \cdot r = r$, we necessarily have $[C:\Stab_C(r)] \, | \, k$; by assumption $[C:\Stab_C(r)]=\lcm_{\ss \ni r} \denom \ds^*$ and hence it is well-defined.
	
	Observe that $\alpha$ preserves distances between distinct elements of $X$ since $v(\alpha(r_1)-\alpha(r_2))=d_{r_1 \wedge r_2}$ by property (ii). As every cluster of $\Sigma$ is of the form $r_1 \wedge r_2$, this implies that $\alpha$ induces a depth-preserving bijection on the clusters of $\Sigma$ with the image of $r_1 \wedge r_2$ being $\alpha(r_1) \wedge \alpha(r_2)$. The surjection is due to the fact that $f$ has roots $\alpha(r_i)$ and hence every cluster of $\Sigma_f$ has the form $\alpha(r_1) \wedge \alpha(r_2)$.

	So far we have constructed an isomorphic cluster picture $\Sigma_f$, but we have yet to show that $f$ is actually defined over $K$ and that the inertia action on the roots of $f$ is given by the image of $C$ under $\alpha$. First note that $\alpha(r) \in K(\pi_K^{1/e})$ for all $r \in X$ and let $I=\Gal(K(\pi_K^{1/e})/K)$ be the inertia group. Then by property (iv), if $r_2=c^k \cdot r_1$ for some integer $k$, then $\alpha(r_2)=i^k(\alpha(r_1))$ where $i$ is the generator of $I$ such that $i(\pi_K^{1/e})=\zeta_e\pi_K^{1/e}$ and hence $f$ is invariant under $I$ so $f \in \mathcal{O}_K[x]$. In addition, the remaining properties imposed ensure that $[I:\Stab_I(\alpha(r))]=[C:\Stab_C(r)]$ for all $r$ and hence $I$ is the image of $C$ under $\alpha$.
\end{proof}

\begin{proof}[Proof of Theorem \ref{polytype}]
	First note that the cluster picture of a polynomial is invariant under unramified extensions of $K$. Moreover for all $y \in Y$, $K(\alpha(y))/K$ is totally ramified and hence its Galois conjugates are precisely the inertia conjugates. We may therefore assume $\zeta_e \in K$ and hence it suffices to prove that $f_Y$ satisfies the conditions of Lemma \ref{polylem}. Observe that by construction, $a(c^k \cdot y, c^k \cdot \ss)=i(\pi_K^{k\ds})a(y,\ss)= \zeta_e^{ke\ds}a(y,\ss)$ and hence satifies constraint (iv). Now let $\ss$ be a proper cluster and suppose $c^k \cdot y \not\in \ss$. Then $y \not\in c^{-k} \cdot \ss$ and hence $a(c^k \cdot y, \ss)=0$. Moreover if $\ss$ is an orphan, then so is $c^{-k} \cdot \ss$ and hence we have constraints (i) and (iii). It remains to check (ii).
	
	Let $\ss'$ be the smallest cluster containing $c^k \cdot y_1$ and $c^l \cdot y_2$, for some integers $k,l$. Note that we may assume $l=0$ by applying $c^{-l}$ to the roots and $\ss'$ without affecting the statement. By the construction of $Y$, we necessarily have $y_1$; if not, then $c^k \cdot \ss' \neq \ss'$ both have nontrivial intersection with $Y$ which is a contradiction. Moreover, this implies that $c^k \cdot \ss' = \ss'$ so therefore $[C:\Stab_C(\ss')] \, | \, k$ and hence $\zeta_e^{ke\ds}=1$ for every non-orphan cluster $\ss$ properly containing $\ss'$. Therefore $a(c^k \cdot y_1, \ss)=a(y_1,\ss)$ for all clusters $\ss$ properly containing $\ss'$ and (ii)(a) follows by the analogous condition on $y_1$ and $y_2$.
	
	We now must show that $a(c^k \cdot y_1,\ss')-a(y_2,\ss')$ is a unit in $K$. We know that $y_1 \in \ss'$ so we have two cases depending on whether $\ss'$ is the smallest cluster containing $y_1$ and $y_2$ or not. If so, then this follows from (ii)(b). Otherwise, $\ss'$ is the smallest cluster containing $y_1$ and $c^k \cdot y_1$; in this case it suffices to show $(1-\zeta_e^{ked_{\ss'}})$ is a unit. If $\zeta_e^{ked_{\ss'}} \neq 1$, then we are done since roots of unity necessarily have distinct images in the residue field. Now $\zeta_e^{ked_{\ss'}} = 1 \Leftrightarrow \denom d_{\ss'} | k$. This is impossible since it would imply that the child of $\ss'$ containing $y_1$ is stable under $c^k$, and hence contains $c^k \cdot y_1$, which is a contradiction to minimality. 
\end{proof}

\section{Recovering depth denominators}
Unfortunately, knowledge of the inertia action on roots (and hence all clusters), together with the collection of clusters, does not enable one to completely recover the denominators of the depths.

\begin{example}
Fix a prime $p \geqslant 5$ and primitive cube root of unity $\zeta_3$. Consider the polynomials $$f_1= x^6 -2p(3p+1)x^3 -p^2(p^3-3p^2+3p-1),$$ with roots $\zeta_3^j p^{1/3}+ (-\zeta_3)^j p^{5/6}, 0 \leqslant j \leqslant 5,$ and $$f_2= x^6 -3px^4 -2px^3+3p^2x^2-6p^2+(p^2-p^3),$$ with roots $\zeta_3^j p^{1/3} + (-1)^j p^{1/2}, 0 \leqslant j \leqslant 5$.

Both polynomials have $C_6$ inertia action on their roots and the unlabelled cluster picture is
\begin{center}
\clusterpicture
\Root(0.00,2)A(a1);     
\Root(0.40,2)A(a2);     
\Root(1.20,2)A(a3);      
\Root(1.60,2)A(a4);
\Root(2.40,2)A(a5);
\Root(2.80,2)A(a6);
\ClusterL(c1){(a1)(a2)}{};
\ClusterL(c2){(a3)(a4)}{};
\ClusterL(c3){(a5)(a6)}{};
\ClusterL(c4){(c1)(c2)(c3)}{};
\endclusterpicture
\end{center}

\noindent with the top cluster $\cR$ having depth $\frac{1}{3}$ and permuting its three children. The inner clusters of size $2$ each permute the roots contained in them (under their $C_3$ stabiliser), but either have depth $\frac{5}{6}$ (in the case of $f_1$) or $\frac{1}{2}$ (in the case of $f_2$).
\end{example}

As the example above shows, we do not have a bijection between possible inertia actions and possible depth denominators for a given cluster picture. However, we can apply Theorem \ref{main} to ascertain all possible sets of depth denominators for a given inertia action; we demonstrate this through an example.


\begin{example}
Consider the following cluster picture with $$I = \langle (r_1,r_7,r_4,r_{10},r_2,r_8,r_5,r_{11},r_3,r_9,r_6,r_{12}) \rangle \cong C_{12},$$ where we order the roots from left to right.
	
\begin{center}
\clusterpicture
\Root(0.00,2)A(a1);     
\Root(0.40,2)A(a2);     
\Root(0.80,2)A(a3);      
\Root(1.60,2)A(a4);
\Root(2.00,2)A(a5);
\Root(2.40,2)A(a6);
\Root(3.60,2)A(b1);
\Root(4.00,2)A(b2);
\Root(4.40,2)A(b3);
\Root(5.20,2)A(b4);
\Root(5.60,2)A(b5);
\Root(6.00,2)A(b6);
\ClusterL(s1){(a1)(a2)(a3)}{};
\ClusterL(s2){(a4)(a5)(a6)}{};
\ClusterL(s3){(s1)(s2)}{};
\ClusterL(s4){(b1)(b2)(b3)}{};
\ClusterL(s5){(b4)(b5)(b6)}{};
\ClusterL(s6){(s4)(s5)}{};
\ClusterL(s7){(s3)(s6)}{};
\endclusterpicture
\end{center}	
	
We have $7$ proper clusters for which to determine the depths: 
\begin{eqnarray*}
\ss_1 &=& \{ r_1,r_2,r_3 \}, \qquad \ss_2=\{r_4,r_5,r_6\}, \qquad \ss_3=\ss_1 \cup \ss_2, \\ \relax
\ss_4 &=& \{r_7,r_8,r_9\}, \qquad \ss_5=\{r_{10},r_{11},r_{12} \}, \qquad \ss_6=\ss_4 \cup \ss_5, \quad \cR.
\end{eqnarray*}

First we identify the inertia orbits on proper clusters to reduce our computation since inertia-conjugate clusters have the same depth (Remark \ref{Galval}); for example $d_{\ss_3}=d_{\ss_6}$. This reduces us to determining possible depth denominators for $\cR, \ss_3$ and $\ss_1$.

As before, we work from the outside in and hence start with $\cR$. Note that its children $\ss_3$ and $\ss_6$ have orbit length $2$ hence $2=\denom d_{\cR}$ by Theorem \ref{main}iii. We now consider $\ss_3$. This has two children which are permuted hence $2=\denom [I:I_{\ss_3}]d_{\ss_3} = \denom 2d_{\ss_3}$; the only choice is $\denom d_{\ss_3}=4$.

Finally consider $\ss_1$. This has three children which are all permuted under $I_{\ss_1} \cong C_4$ so have orbit length $3$. Hence, by Theorem \ref{main}iii, we have $3=\denom 4d_{\ss_1}$. This does not have a unique solution but three different possibilities: $\denom d_{\ss_1} \in \{3,6,12 \};$ moreover each option is compatible with Theorem \ref{main}i. Indeed each case occurs; since all the roots are necessarily Galois conjugate, we shall only give one root $\alpha$ of the degree $12$ polynomial over $\qp$ in the table below.

\begin{center}
\begin{tabular}{c|c|c|c}
	$\denom d_{\cR}$ & $\denom d_{\ss_3}$ & $\denom d_{\ss_1}$ & $\alpha$ \\ \hline
	$2$ & $4$ & $3$ & $p^{1/2}+p^{3/4}+p^{4/3}$ \\
	$2$ & $4$ & $6$ & $p^{1/2}+p^{3/4}+p^{5/6}$ \\
	$2$ & $4$ & $12$ & $p^{1/2}+p^{3/4}+p^{11/12}$ \\
\end{tabular}
\end{center}

\end{example}

\section{The inertia representation}
\label{rootnosection}

In this section, we are going to apply the results built up so far in order to determine both $H^1_{\acute{e}t}(C/\overline{K},\qq_{\ell})$ purely in terms of the information contained in its cluster picture. We deal with the hyperelliptic curve $C/K: y^2=f(x)$ and let $c_f$ be the leading coefficient of $f$. To state the relevant theorems, we will need to introduce more notation and define some more invariants associated to clusters; see \cite{DDMM} for the full definitions.

\begin{definition}
Let $\ss$ be a cluster. Then we call $\ss$ \emph{odd} (resp. \emph{even}) if $|\ss|$ is odd (resp. even). We say $\ss$ is \emph{\"{u}bereven} if all children are even. We further define $\ss$ to be a \emph{cotwin} if it is not \"{u}bereven and has a child of size $2g$, where $|\cR| \in \{2g+1,2g+2\}$.

If $\ss$ is not a cotwin, we define $\ss^*$ to be the smallest cluster $\ss^* \supseteq \ss$ whose parent is not \"{u}bereven (and $\ss^*=\cR$ if no such cluster exists); if $\ss$ is a cotwin, then we let $\ss^*$ be its child of size $2g$.

If $\ss$ is proper, then we attach the following quantities: \\
\begin{tabular}{l p{0.85\linewidth}}
$\ss^{odd}$ & set of all odd children of $\ss$; \\
$\lambda_{\ss}$ & $=\frac{1}{2}[I:\Is](v(c_f)+d_{\ss}|\ss^{odd}| + \sum\limits_{r \not\in \ss} v(r-r_0))$ for any $r_0 \in \ss$; \\
$\gamma_{\mathfrak{s}}$ & any character of $I_{\ss}$ of order equal to the prime-to-$p$ part of the denominator of $\lambda_{\ss}$ (with $\gamma_{\ss}=\mathds{1}$ if $\lambda_{\ss}=0$).
\end{tabular}

If $\ss$ is even or a cotwin, we define $\es : \Is \rightarrow \{ \pm 1 \}$ to be $$\es(\sigma) \equiv \frac{\sigma(\theta_{\ss^*})}{\theta_{\ss^*}} \bmod{\mathfrak{m}},$$ where $\theta_{\ss}=\sqrt{c_f \prod\limits_{r \not\in \ss} (r_0 -r)}$ for any $r_0 \in \ss$ and $\mathfrak{m}$ the maximal ideal of algebraic closure of $K$. For other clusters $\ss$, we set $\es$ to be the zero character.

Lastly, we set $V_{\mathfrak{s}}=\gamma_{\mathfrak{s}} \otimes (\cc[\ss^{odd}] \ominus \mathds{1}) \ominus \epsilon_{\mathfrak{s}}$.

\end{definition}

\begin{remark}
	In \cite{DDMM}, the authors define $\es$ as a map $G_K \rightarrow \{\pm 1 \}$. The definition of $\es$ we give here is equal to the restriction of their $\es$ to the inertia stabiliser of $\ss$, where it is a character and independent of all choices.
\end{remark}

\begin{theorem}[{\cite[Theorem 1.19]{DDMM}}]
\label{M2D2rep}
	Let $\ell \neq p$ be prime. Then, as representations of the absolute inertia group of $K$: $$H^1_{\acute{e}t}(C/\overline{K},\qq_{\ell}) \otimes \mathbb{C} \cong H^1_{ab} \oplus (H^1_t \otimes \sp(2) ),$$ where $$H^1_{ab} = \bigoplus\limits_{\ss \in X/I} \Ind V_{\ss}, \qquad H^1_t = \left( \bigoplus\limits_{\ss \in X/I} \Ind \varepsilon_{\ss} \right) \ominus \varepsilon_{\cR},$$ where $X$ is the set of proper non-\"{u}bereven clusters.
\end{theorem}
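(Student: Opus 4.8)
The plan is to deduce the statement from the explicit description of the stable model of $C$, together with the standard dictionary between $\ell$-adic cohomology of a semistable curve and the combinatorics of its special fibre. First pass to a finite extension $K'/K$ over which $C$ acquires semistable reduction; since $p$ is odd and we are in the tame situation, $K'$ may be taken with $\Gal(K'/K)$ cyclic of prime-to-$p$ order. Over $K'$ one has a stable model $\mathcal{C}/\mathcal{O}_{K'}$ whose special fibre $\overline{\mathcal{C}}$ is assembled from the cluster picture: each proper non-\"{u}bereven cluster $\ss$ contributes one irreducible component $\Gamma_{\ss}$, obtained by reducing $y^2=f(x)$ on the annulus cut out by $\ss$, namely the smooth model of a curve $y^2 = c_{\ss}\prod_{\mathfrak{t}\in\ss^{odd}}(x-x_{\mathfrak{t}})$ over the residue field; \"{u}bereven clusters contribute rational components with no $H^1$, and the $\Gamma_{\ss}$ are glued along nodes dictated by parent--child incidences, producing the dual graph $G$.

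I would then invoke the weight filtration on $H^1_{\acute{e}t}(C/\overline{K},\qq_\ell)$ of a semistable curve: there is a short exact sequence with sub the toric part $H^1(G)$ (first homology of the dual graph, suitably Tate-twisted), quotient the abelian part $\bigoplus H^1(\Gamma_{\ss})$, and monodromy the Picard--Lefschetz operator $N$; the Jordan type of $N$ packages each independent loop into a copy of $\sp(2)$, producing the decomposition into $H^1_{ab}$ and $H^1_t\otimes\sp(2)$ over $K'$. Over $K'$ inertia acts trivially on all of this by semistability, so the inertia representation of $C/K$ is pulled back along the tame cyclic surjection $I\twoheadrightarrow\Gal(K'/K)$; this quotient acts on $\overline{\mathcal{C}}$ exactly as $I$ acts on the cluster picture, permuting components, nodes and loops according to the action on clusters and acting on each fixed $\Gamma_{\ss}$ through its stabiliser $\Is$. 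Collecting the components (resp.\ loops) in a single $I$-orbit of $X/I$ turns the direct sum over that orbit into $\Ind_{\Is}^I$, which is the source of the outer $\Ind$ in both $H^1_{ab}$ and $H^1_t$.

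It then remains to identify the $\Is$-representation on $\Gamma_{\ss}$ with $V_{\ss}=\gamma_{\ss}\otimes(\cc[\ss^{odd}]\ominus\mathds{1})\ominus\epsilon_{\ss}$ and the class supplied by $\ss$ in $H^1(G)$ with $\Ind\epsilon_{\ss}$. Riemann--Hurwitz gives the genus: $\Gamma_{\ss}$ is the double cover branched at the $|\ss^{odd}|$ pinch points, with $\infty$ adjoined precisely when $|\ss^{odd}|$ (equivalently $|\ss|$) is odd, so $H^1(\Gamma_{\ss})$ is the reduced permutation module $\cc[\ss^{odd}]\ominus\mathds{1}$ in the odd case and a further quotient of it by a line — namely $\epsilon_{\ss}$ — in the even case. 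The character $\gamma_{\ss}$ appears because the leading coefficient $c_{\ss}$ is a unit times $\pi_K^{\lambda_{\ss}}$ with $\lambda_{\ss}=\tfrac{1}{2}[I:\Is](\mu_{\ss}+\ds|\ss^{odd}|)$ (the $\mu_{\ss}$ term recording the roots outside $\ss$, the $\ds|\ss^{odd}|$ term the pinch points), so rescaling $y$ to make $\Gamma_{\ss}$ honestly semistable forces a $\denom\lambda_{\ss}$-th root of $\pi_K$ on which tame inertia acts by the order-$\denom\lambda_{\ss}$ character $\gamma_{\ss}$; the even/odd dichotomy together with $v_2([I:\Is]\mu_{\ss})$ controls whether this twist turns $\epsilon_{\ss}$ from $\mathds{1}$ into an order-two character. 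Finally the toric part is $H^1(G)$ as an $I$-module, each non-\"{u}bereven $\ss$ contributing one class carrying the same $\epsilon_{\ss}$ that records the behaviour of the cover at the corresponding node, with a spanning-tree correction removing one copy of the top character $\varepsilon_{\cR}$.

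The step I expect to be the main obstacle is precisely this last bookkeeping of $\gamma_{\ss}$ and $\epsilon_{\ss}$: matching the purely valuation-theoretic quantity $\lambda_{\ss}$ with a genuine character of $\Is$, and then propagating the even-case twist and the $2$-adic subtlety $v_2([I:\Is]\mu_{\ss})$ consistently through both the abelian and the toric parts so that the node contributions glue. Everything else — semistable reduction, the weight filtration, Picard--Lefschetz, and the passage from orbit sums to inductions — is standard machinery; the delicate even/odd and $2$-adic normalisation of $\epsilon_{\ss}$ is the fiddly heart of the computation.
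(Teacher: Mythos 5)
You should first be aware that the paper contains no proof of this statement: Theorem \ref{M2D2rep} is imported verbatim as \cite[Theorem 1.19]{DDMM} and used as a black box, so there is no internal argument to compare yours against. The only place the author engages with its proof is the elliptic-curve case (Theorems \ref{ecgood}, \ref{ecmult}, \ref{kodthm}), where DDMM's hypothesis of genus at least $2$ fails; there the author deliberately avoids the semistable-model machinery and instead verifies the formula by an elementary case check against Kodaira types, minimal discriminants and Rohrlich's formula for the ramification degree. Your proposal, by contrast, is a sketch of the actual DDMM-style proof: tame semistable reduction, a special fibre built from the cluster picture, the weight filtration with Picard--Lefschetz packaging the toric part into $\sp(2)$ factors, descent of the inertia action through the cyclic quotient $\Gal(K'/K)$, and orbit sums becoming inductions $\Ind_{I_{\mathfrak{s}}}^{I}$. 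That is the right architecture for the cited result, and it buys generality (all genera at once) where the paper's own elliptic-curve argument buys only the small case it needs, at the cost of none of the model-theoretic input.

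Two cautions on the sketch itself, since you are reproducing a nontrivial external theorem rather than the paper's argument. First, your dictionary between clusters and the special fibre is stated imprecisely: in DDMM the irreducible components correspond to \emph{principal} clusters (with \"{u}bereven principal clusters contributing \emph{two} rational components $\Gamma_{\mathfrak{s}}^{\pm}$, and twins contributing nodes or chains, not components), whereas you index components by all proper non-\"{u}bereven clusters; the formula's indexing set $X$ works because $V_{\mathfrak{s}}$ and $\varepsilon_{\mathfrak{s}}$ degenerate appropriately, but the geometric statement as you wrote it is not correct and would derail the toric bookkeeping (loops come from even clusters via the doubled components and chains, not ``one class per non-\"{u}bereven cluster''). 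Second, as you yourself flag, the identification of the twists $\gamma_{\mathfrak{s}}$ (via $\lambda_{\mathfrak{s}}$) and the order-two versus trivial dichotomy for $\epsilon_{\mathfrak{s}}$ governed by $v_2([I:I_{\mathfrak{s}}]\mu_{\mathfrak{s}})$ is exactly the hard normalisation in DDMM, and your outline asserts rather than performs it; as a self-contained proof it is therefore a plan, not an argument, though it is a plan pointed at the correct source.
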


To describe this representation in a closed form, we shall explicitly describe $\Ind V_{\ss}$ for a fixed cluster $\ss$ as well as $\Ind \es$; formulae for these will then completely recover both $H^1_{ab}$ and $H^1_t$ as above.

\begin{definition}
	Let $n,d$ be positive integers. Then we define $$\gcd(n,d^{\infty}):= \lim\limits_{k \to \infty} \gcd(n,d^k).$$
\end{definition}
	
\begin{remark}
	Note that for a prime $q$, $q| \gcd(n,d) \Leftrightarrow q| \gcd(n,d^{\infty})$ and if so, then $v_q(\gcd(n,d^{\infty}))=v_q(n)$. Equivalently, $\gcd(n,d^{\infty})$ is the minimal positive divisor $g$ of $n$ such that $\gcd(\frac{n}{g},d)=1$.
\end{remark}

\begin{notation}
For a cluster $\ss$, we define
\begin{enumerate}
\item $n_{\ss}=[I:\Is]$;
\item $n'_{\ss}=\denom (\ds n_{\ss})$, the orbit length of non-orphan children of $\ss$ under $\Is$.
\end{enumerate}
\end{notation}

\begin{notation}
For positive integers $d,t$, we define the following:

\begin{tabular}{ll}
$\varphi(d)$ & $=|(\zz/d\zz)^{\times}|$; \\
$\rho_d$ & direct sum of all characters of order $d$ of $\cc[C_d]$; \\
$A_{d,t}$ & $= \{ q \text{ prime} \, | \, v_q(d)=v_q(t)>0 \}$; \\
$S_{d,t}$ &  $=\left\lbrace \left. \dfrac{\lcm(d,t)}{\prod\limits_{q_i \in A_{d,t}} q_i^{m_i}} \, \right| \, 0 \leqslant m_i \leqslant v_{q_i}(t) \right\rbrace;$ \\
$Q_{q,k}(s)$ & $=\begin{cases}
	\frac{q-2}{q-1} &\text{ if } v_q(s)=k \qquad \text{ where $q$ is prime}, \\
	1 &\text{ else }. \end{cases}$ \\
$\beta(n,s)$ & $=\frac{\gcd(n,s^{\infty})\varphi(s)}{\varphi(\gcd(n,s^{\infty})s)}.$
\end{tabular}
\vspace{10pt}

If $s \in S_{d,t}$, then we further define

\begin{tabular}{ll}
$\alpha_{d,t,s}$ & $=\dfrac{\varphi(d)}{\varphi(\lcm(d,t))}\prod\limits_{q_i \in A_{d,t}} Q_{q_i,v_{q_i}(t)}(s).$
\end{tabular}

In the theorem below, for $a \in \qq$, we shall write $a\rho_d$ to mean $\rho_d^{\oplus a}$. We note that whilst $a$ will not necessarily be an integer, it will be true that $\denom a| \varphi(d)$ (so one may consider $a\rho_d$ as the direct sum of $a\varphi(d)$ distinct characters of order $d$) and also that $\Ind_{\Is}^{I} V_{\ss}$ will be the direct sum of \emph{integer} multiples of $\rho_d$. 

\end{notation}

\begin{theorem}
\label{repthm}
	Let $C/K: y^2=f(x)$ be a hyperelliptic curve with tame reduction. Let $\ss$ be a cluster and suppose that $\gamma_{\ss}$ has order $t$. Then 
	\begin{eqnarray*}
		\Ind_{\Is}^I V_{\ss} &=&  \left\lfloor \dfrac{|\ss^{odd}|}{n'_{\ss}} \right\rfloor \bigoplus\limits_{d|n'_{\ss}} \bigoplus\limits_{s \in S_{d,t}} \bigoplus\limits_{n_1|\frac{n_{\ss}}{\gcd(n_{\ss},s^{\infty})}} \alpha_{d,t,s} \beta(n_{\ss},s) \rho_{s\gcd(n_{\ss},s^{\infty})n_1} \\
		&\oplus & \left( |\ss^{odd}| - n'_{\ss}\left\lfloor \dfrac{|\ss^{odd}|}{n'_{\ss}} \right\rfloor -1 \right) \frac{\beta(n_{\ss},t)}{\varphi(t)} \bigoplus\limits_{n_2|\frac{n_{\ss}}{\gcd(n_{\ss},t^{\infty})}} \rho_{tn_2\gcd(n_{\ss},t^{\infty})} \\
		&\ominus & \Ind_{\Is}^I \es, 
	\end{eqnarray*}	
	where 
	
	$$\Ind_{\Is}^I \es = \begin{cases}
	0 &\text{if $\es=0$}; \\
	\bigoplus\limits_{m|n_{\ss}} \rho_m &\text{ if $\es$ is trivial}; \\
	\bigoplus\limits_{\substack{m|2n_{\ss} \\ m\nmid n_{\ss}}} \rho_m &\text{ if $\es$ has order $2$}.
	\end{cases}$$
\end{theorem}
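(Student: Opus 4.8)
The plan is to compute $\Ind_{\Is}^I V_{\ss}$ by unwinding the definition $V_{\ss}=\gamma_{\ss}\otimes(\cc[\ss^{odd}]\ominus\mathds{1})\ominus\epsilon_{\ss}$ in three stages: first understand $\Is$ and its action on $\ss^{odd}$; then reduce the induction of $\gamma_{\ss}\otimes(\cc[\ss^{odd}]\ominus\mathds{1})$ to a computation of $\Ind_{\Is}^I\chi$ for a single character $\chi$ of $\Is$ of known order; and finally decompose that induced character into a sum of $\rho_d$'s. First I would record from Theorem~\ref{main} (and the Notation preceding the statement) that $\Is$ is cyclic of order $|I|/n_{\ss}$, that $I/\Is$ is cyclic of order $n_{\ss}$, and that $\Is$ acts on the $|\ss^{odd}|$ odd children through a quotient with orbits all of length $n'_{\ss}=\denom(\ds n_{\ss})$ except possibly one orphan fixed point. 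Hence $\cc[\ss^{odd}]\ominus\mathds{1}$ decomposes, as an $\Is$-representation, into $\lfloor|\ss^{odd}|/n'_{\ss}\rfloor$ copies of the regular representation of $C_{n'_{\ss}}$ (inflated to $\Is$), plus a ``leftover'' term coming from the incomplete orbit and the removal of $\mathds{1}$; the two big summands in the statement correspond exactly to these two pieces after tensoring with $\gamma_{\ss}$ and inducing.

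Next I would carry out the core local computation: for a cyclic group $I$ with cyclic subgroup $\Is$ of index $n_{\ss}$ and a character $\chi$ of $\Is$ of order $t$, determine $\Ind_{\Is}^I\chi$ as a sum of the $\rho_m$. Since $I$ is cyclic, $\Ind_{\Is}^I\chi=\bigoplus_{\psi}\psi$ where $\psi$ runs over the $n_{\ss}$ characters of $I$ restricting to $\chi$ on $\Is$; grouping these by order gives a sum of $\rho_m$'s, and the multiplicities are governed by how $t$ interacts with $n_{\ss}$ — specifically by $\gcd(n_{\ss},t^{\infty})$, which is precisely the part of $n_{\ss}$ that is ``forced'' into the order of any extension $\psi$. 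A counting argument with Euler's function then yields that the orders $m$ that occur are exactly $m=t\cdot\gcd(n_{\ss},t^{\infty})\cdot n_2$ for $n_2\mid n_{\ss}/\gcd(n_{\ss},t^{\infty})$, each with multiplicity $\beta(n_{\ss},t)/\varphi(t)$; this explains the second summand verbatim and, after accounting for the $d$-torsion structure of the regular representation of $C_{n'_{\ss}}$ (where a character of $\gamma_{\ss}\otimes C_{n'_{\ss}}$ has order of the form $\lcm(d,t)$ up to the cancellation encoded by $A_{d,t}$, $S_{d,t}$ and $Q_{q,k}$), explains the first summand and the coefficients $\alpha_{d,t,s}$. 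The $\ominus\,\Ind_{\Is}^I\es$ term is then immediate from the same cyclic-induction formula applied to $\es$, whose possible values ($0$, $\mathds{1}$, or an order-two character) are exactly the three cases in the displayed formula.

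The routine-but-bookkeeping-heavy part is the passage from $\gamma_{\ss}\otimes\rho_{n'_{\ss}}$ (a sum of characters of $\Is$ of various orders) to its induction: one must, for each divisor $d\mid n'_{\ss}$, compute the order of $\gamma_{\ss}\otimes(\text{order-}d\text{ character})$, which is $\lcm(d,t)$ divided by whatever common prime-power factors cancel, and this is precisely what the set $S_{d,t}$ and the correction factors $Q_{q_i,v_{q_i}(t)}(s)$ encode; then feed each resulting order into the cyclic-induction formula above. The main obstacle I anticipate is proving that the rational coefficients $\alpha_{d,t,s}\beta(n_{\ss},s)$ and the leftover coefficient $(|\ss^{odd}|-n'_{\ss}\lfloor|\ss^{odd}|/n'_{\ss}\rfloor-1)\beta(n_{\ss},t)/\varphi(t)$ combine, after summing over $d$, $s$ and the $n_1$, $n_2$, into \emph{integer} multiplicities of each $\rho_m$ — i.e.\ verifying the compatibility of these local contributions, since individually $\alpha_{d,t,s}\beta(n_{\ss},s)$ need not be an integer. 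This will require a careful prime-by-prime analysis showing that for each prime $q$ the $q$-adic valuations match up, using the remark that $v_q(\gcd(n,s^{\infty}))=v_q(n)$ whenever $q\mid\gcd(n,s)$, together with the multiplicativity of $\varphi$. Once integrality is established, the identity of the two sides follows by comparing multiplicities of each irreducible $\psi$ of $I$, and the sign term from $\es$ is subtracted at the very end.
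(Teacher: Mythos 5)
Your proposal is correct and follows essentially the same route as the paper: decompose $\cc[\ss^{odd}]\ominus\mathds{1}$ over $\Is$ via the orbit/orphan structure into copies of $\cc[C_{n'_{\ss}}]$ plus a leftover multiple of $\mathds{1}$, analyse the orders of $\gamma_{\ss}\otimes\rho_d$ (which is exactly what $A_{d,t}$, $S_{d,t}$, $Q_{q,k}$ and $\alpha_{d,t,s}$ encode), apply the cyclic-induction formula producing the $\gcd(n_{\ss},s^{\infty})$ and $\beta$ factors, and subtract $\Ind_{\Is}^I\es$ computed directly from its three possible values. The only cosmetic differences are that you compute the induction by counting the $n_{\ss}$ extensions of a character of $\Is$ to $I$ rather than the paper's two-step induction lemma, and your worry about integrality of the coefficients is not an obstacle, since the left-hand side is an honest induced (virtual) representation and the fractional multiplicities are purely notational.
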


\subsection{Simplification for root numbers}

We write $\chi_e$ for a ramified character of order $e$ on the full inertia group $I$.

\begin{theorem}[{\cite[Theorem 1.4]{Bis19}}]
\label{rootthm}
Let $K$ be a non-Archimedean local field and let $q$ be the cardinality of its residue field. Let $A/K$ be an abelian variety with tame reduction and write $\rho_A = \rho_B \oplus (\rho_T \otimes \sp(2))$, where $\rho_T$ is an Artin representation and $\rho_A$ is a Weil representation with finite image of inertia. Let $m_T = \langle \rho_T \mid_I, \chi_2 \rangle$ and $m_e= \langle \rho_B \mid_I, \chi_e \rangle \text{ for } e\geqslant 2.$ Then $$W(A/K) = \left( \prod\limits_{e \geqslant 3} W_{q,e}^{m_e} \right) (-1)^{\langle \rho_T, \mathds{1} \rangle} W_{q,2}^{m_T + \frac{1}{2}m_2},$$ where for an integer $k>0$ and rational odd prime $l$:
	
$W_{q,e} = \begin{cases}
	\left( \dfrac{q}{l} \right) &\text{ if } e=l^k; \\[10pt]	
	\left( \dfrac{-1}{q} \right) &\text{ if } e=2l^k \quad \text{ and } l \equiv 3 \mod{4} \qquad \text{ or } e=2; \\[10pt]	
	\left( \dfrac{-2}{q} \right) &\text{ if } e=4; \\[10pt]	
	\left( \dfrac{2}{q} \right) &\text{ if } e=2^k \quad \text{ for } k \geqslant 3; \\[10pt]	
	\quad 1 &\text{ else.}
\end{cases}$
\end{theorem}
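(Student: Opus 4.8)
\textbf{Proof proposal for Theorem \ref{rootthm}.}

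The plan is to reduce $W(A/K)$ to root numbers of one-dimensional representations over intermediate fields and to evaluate the latter as Gauss sums. By the formalism of local $\varepsilon$-factors (Deligne, Langlands), $W(A/K)$ equals the root number of the Weil--Deligne representation $\rho_A$ on $H^1$, and root numbers are additive, so the decomposition in the statement gives $W(A/K) = W(\rho_B)\cdot W(\rho_T\otimes\sp(2))$. Since $A$ has tame reduction, both $\rho_B$ and $\rho_T$ factor through $\Gal(L/K)$ for a tamely ramified $L/K$; in particular their inertia image is cyclic. Moreover $\rho_A$ underlies $H^1$ of an abelian variety, hence is symplectically self-dual with trivial determinant, and the toric summand $\rho_T$ (the Galois action on the character lattice of the reduction torus) is orthogonally self-dual and $\pm 1$-valued on determinants, so in particular $W(\rho_T)=\pm 1$.

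The first real step is to dispose of the Steinberg-twisted piece. Using the standard formula for the root number of $\sigma\otimes\sp(2)$ with $\sigma$ an Artin representation (Tate, Rohrlich),
\[
W(\sigma\otimes\sp(2)) = W(\sigma)^2\cdot\det\sigma(-1)\cdot(-1)^{\dim\sigma^{I_K}},
\]
applied to $\sigma=\rho_T$, the factor $(-1)^{\dim\rho_T^{I_K}}$ differs from $(-1)^{\langle\rho_T,\mathds{1}\rangle}$ only by signs coming from Frobenius eigenvalues equal to $-1$ on $\rho_T^{I_K}$; these, together with $W(\rho_T)^2$ and $\det\rho_T(-1)$, are (products of) root numbers of quadratic twists, which I will collect into the $W_{q,2}^{m_T}$ term. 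Thus one isolates the sign $(-1)^{\langle\rho_T,\mathds{1}\rangle}$ and is left to compute the root number of a genuine monodromy-free tame virtual Artin representation $\sigma_0$ (built from $\rho_B$ and the leftover quadratic pieces of $\rho_T$) with unramified determinant.

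The second step is to decompose $\sigma_0$ into induced characters. Because the inertia subgroup of $\Gal(L/K)$ is cyclic, Clifford theory for the tame quotient of the Weil group writes every irreducible constituent of $\sigma_0$ as $\Ind$ from a tamely ramified character $\chi$ of $W_{K_i}$ for an intermediate field $K_i/K$; inductivity of $\varepsilon$-factors in degree zero (valid here precisely because $\det\sigma_0$ is unramified, so the global constants cancel) yields $W(\sigma_0)=\prod_i W(\chi_i)$. In the hyperelliptic case this decomposition is already made explicit by Theorem \ref{repthm}, where $\Ind_{\Is}^I V_{\ss}$ and $\Ind_{\Is}^I\es$ are expressed as sums of the $\rho_d$, i.e.\ of full packets of order-$d$ characters. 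Finally one evaluates $W(\chi)$ for a tame character over a field with residue cardinality $q'$: the constituents come in conjugate pairs $\{\chi,\bar\chi\}$ by self-duality, with $W(\chi)W(\bar\chi)=\chi(-1)$, and the residual quadratic Gauss sums that survive are computed by Stickelberger's theorem and classical cyclotomic Gauss-sum evaluations; the residue degree $f$ of $K_i/K$ converts $\left(\tfrac{\cdot}{q'}\right)$ into $\left(\tfrac{\cdot}{q}\right)^f$. Collating by the order $e$ of the character reproduces the table of $W_{q,e}$ (the cases $e=l^k$, $e=2l^k$, $e=4$, $e=2^k$ with $k\ge 3$ reflecting, respectively, the quadratic subfield of $\mathbb{Q}(\zeta_{l^k})$, the twist by $\zeta_2$, and the two quadratic subfields $\mathbb{Q}(\sqrt{-2})$, $\mathbb{Q}(\sqrt{2})$ of the $2$-power cyclotomic tower), with $m_e$ recording the multiplicity of order $e$ in $\sigma_0$.

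The main obstacle is this last step: organising the Gauss-sum evaluations uniformly in $e$ and in the ramification/residue data of the $K_i$, and in particular the $2$-primary part, where the answer genuinely depends on $q\bmod 8$ and the pairing $\chi\leftrightarrow\bar\chi$ degenerates for quadratic $\chi$. Everything else is bookkeeping once the inductive structure and the $\sp(2)$-twist identity are in hand.
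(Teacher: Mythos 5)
A preliminary remark: this paper does not prove Theorem \ref{rootthm} at all --- it is imported verbatim from \cite[Theorem 1.4]{Bis19} --- so the only proof to measure your sketch against is the one in that reference. Your overall plan (split off the $\sp(2)$-twisted toric piece, reduce the remaining tame part to induced tame characters, evaluate those by Gauss sums) is the same general route, but two steps of your write-up are genuinely wrong or missing.

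First, the $\sp(2)$ identity you quote is incorrect. The standard formula (Tate, Rohrlich) is $W(\sigma\otimes\sp(2))=W(\sigma)^2\cdot\operatorname{sign}\det\left(-\Phi \mid \sigma^{I_K}\right)$, with $\Phi$ a Frobenius; testing your version on an unramified quadratic $\sigma$ (nonsplit toric reduction) gives $-1$ instead of the correct $+1$. This is not a cosmetic slip, because the sign $(-1)^{\langle\rho_T,\mathds{1}\rangle}$ in the statement comes exactly from $\det(-\Phi\mid\rho_T^{I_K})$: a Frobenius eigenvalue $+1$ contributes $-1$, an eigenvalue $-1$ contributes $+1$, and complex pairs contribute $+1$, so the sign is $(-1)^{\langle\rho_T,\mathds{1}\rangle}$ and not $(-1)^{\dim\rho_T^{I_K}}$ up to corrections. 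Your plan to absorb the discrepancy (``signs coming from Frobenius eigenvalues equal to $-1$'') into $W_{q,2}^{m_T}$ cannot work: those signs depend on the Frobenius action (split versus nonsplit pieces), whereas $W_{q,2}=\left(\frac{-1}{q}\right)$ is $+1$ whenever $q\equiv 1 \bmod 4$, so no power of it can supply the needed $-1$. The clean argument is $W(\rho_T\otimes\sp(2))=W(\rho_T)^2\,(-1)^{\langle\rho_T,\mathds{1}\rangle}$ with $W(\rho_T)^2=\det\rho_T(-1)$ by self-duality, and $\det\rho_T(-1)=\left(\frac{-1}{q}\right)^{m_T}$ where $m_T$ counts the ramified order-two constituents (this is exactly how $m_T$ is used in Proposition \ref{toricprop}).

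Second, the part of the theorem that carries its actual content --- the explicit table of $W_{q,e}$ --- is asserted rather than derived. ``Stickelberger plus classical Gauss sums reproduces the table'' skips the real work: one must count Frobenius orbits of tame characters of each order $e$ (the parity of that count, governed by the multiplicative order of $q$ modulo $l^k$, is where $\left(\frac{q}{l}\right)$ enters for $e=l^k$), use $W(\chi)W(\bar\chi)=\chi(-1)$ (Fr\"ohlich--Queyrut) for conjugate pairs, and evaluate the self-conjugate quadratic characters by the explicit quadratic Gauss sum, which is where $\left(\frac{-1}{q}\right)$, $\left(\frac{\pm 2}{q}\right)$ and the dependence on $q \bmod 8$ appear; none of this is in your sketch, and it is precisely what distinguishes the cases $e=2l^k$ with $l\equiv 3\bmod 4$, $e=4$ and $e=2^k$, $k\geqslant 3$. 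Relatedly, your justification of $W(\sigma_0)=\prod_i W(\chi_i)$ via ``inductivity in degree zero, valid because $\det\sigma_0$ is unramified'' is off: degree-zero inductivity of $\varepsilon$-factors is unconditional, and what actually needs controlling are the induction constants ($\lambda$-factors) attached to each $\Ind$, which contribute signs of their own and must be tracked (or avoided, as in \cite{Bis19}, by working with the full rational packets $\rho_d$ rather than individual characters).
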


We shall further examine the contribution to the root number from the toric part, i.e. the $\varepsilon$ terms. As mentioned previously, the depths do not encode the action of Frobenius so we are unable to recover $\langle \rho_T, \mathds{1} \rangle$ to distinguish between, for example, split and nonsplit multiplicative reduction. All that remains is to determine $m_T$, which counts the number of order $2$ characters in $\rho_T$.

\begin{proposition}
\label{toricprop}
	Assume $I$ is cyclic. Let $m_T(\ss)$ denote the number of quadratic characters in $\Ind_{\Is}^I \es$ and let $n_{\ss}=[I:\Is]$. Then $m_T(\ss) \leqslant 1$ and moreover:
	\begin{enumerate}
		\item If $\es=0$, then $m_T(\ss)=0$;
		\item If $\es=\mathds{1}$, then $m_T(\ss) \not\equiv n_{\ss} \bmod{2}$;
		\item If $\es$ has order $2$, then $m_T(\ss) \equiv n_{\ss} \bmod{2}$.
	\end{enumerate}
\end{proposition}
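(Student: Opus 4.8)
The plan is to analyze $\Ind_{\Is}^I \es$ directly from the explicit description given just above in Theorem~\ref{repthm}, using the fact that $I$ is cyclic of order $|I| = [I:\Is] \cdot |\Is| = n_\ss |\Is|$, so every quotient and subgroup is cyclic. Since $\rho_m$ is by definition the sum of the $\varphi(m)$ characters of order exactly $m$ of $C_m$, a quadratic (order two) character appears in $\rho_m$ precisely when $m = 2$, and then with multiplicity $\varphi(2) = 1$. Hence $m_T(\ss)$ counts how many of the divisors $m$ occurring in the relevant index set equal $2$, which is at most one; this immediately gives $m_T(\ss) \leqslant 1$.

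First I would dispose of case (i): if $\es = 0$ then $\Ind_{\Is}^I \es = 0$ by the formula, so $m_T(\ss) = 0$, and there is nothing more to prove. For case (ii), when $\es = \mathds{1}$ we have $\Ind_{\Is}^I \es = \bigoplus_{m \mid n_\ss} \rho_m$, so $m_T(\ss) = 1$ iff $2 \mid n_\ss$ and $m_T(\ss) = 0$ iff $n_\ss$ is odd; in both subcases $m_T(\ss) \not\equiv n_\ss \bmod 2$, as claimed. For case (iii), when $\es$ has order $2$ we have $\Ind_{\Is}^I \es = \bigoplus_{m \mid 2n_\ss,\ m \nmid n_\ss} \rho_m$; a quadratic character occurs iff $m = 2$ lies in this index set, i.e.\ iff $2 \mid 2n_\ss$ (always true) and $2 \nmid n_\ss$, i.e.\ iff $n_\ss$ is odd, giving $m_T(\ss) \equiv n_\ss \bmod 2$ again.

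To make this airtight I would first justify the three-case formula for $\Ind_{\Is}^I \es$ itself in the cyclic setting, since Proposition~\ref{toricprop} is stated under the cyclic hypothesis whereas Theorem~\ref{repthm} is stated more generally: with $I$ cyclic and $\es$ a character of $\Is$ of order $1$ or $2$, the induced representation $\Ind_{\Is}^I \es$ decomposes as the sum of all characters $\chi$ of $I \cong C_{|I|}$ restricting to $\es$ on $\Is$, by Frobenius reciprocity and the self-duality of the group algebra; grouping these characters by order recovers exactly $\bigoplus_{m \mid n_\ss} \rho_m$ when $\es$ is trivial (the characters trivial on $\Is$ are the characters of $I/\Is \cong C_{n_\ss}$) and $\bigoplus_{m \mid 2n_\ss,\, m \nmid n_\ss} \rho_m$ when $\es$ is the unique order-two character of $\Is$ (the characters of $I$ restricting to this one are those of order exactly $2m'$ with $m' \mid n_\ss$ in a suitable parametrization, equivalently order $m$ with $m \mid 2n_\ss$, $m \nmid n_\ss$). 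The only mild subtlety is checking that $\Is$ is of even order in case (iii) so that an order-two character exists --- but this is forced by $\es$ having order $2$, which is part of the hypothesis.

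The main obstacle, such as it is, is purely bookkeeping: confirming that in the cyclic case the denominator/order of $\es$ being $2$ forces $2 \mid |\Is|$, and that the parametrization of characters of $I$ by divisors $m$ of $|I|$ interacts correctly with restriction to $\Is$ so that ``order $2$ appears'' is equivalent to the stated parity condition on $n_\ss$. Everything else is a one-line reading off of the formula in Theorem~\ref{repthm}. I would also remark that the bound $m_T(\ss) \leqslant 1$ and the parities together show $m_T(\ss)$ is determined by $\es$ and the parity of $n_\ss$ alone, which is the form in which this proposition will be used when assembling the root number via Theorem~\ref{rootthm}.
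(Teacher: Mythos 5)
Your proof is correct and follows essentially the same route as the paper: the paper's proof simply identifies $\Ind_{\Is}^I \es$ as $0$, $\cc[C_{n_{\ss}}]$, or $\cc[C_{2n_{\ss}}] \ominus \cc[C_{n_{\ss}}]$ and reads off the (at most one, by cyclicity) quadratic constituent, which is exactly the character-level decomposition you derive via Frobenius reciprocity. Your care in re-deriving the three-case formula for $\Ind_{\Is}^I \es$ rather than quoting Theorem \ref{repthm} is well placed, since the paper's proof of that theorem itself invokes this proposition for the $\es$-term, so quoting it here would be circular.
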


\begin{proof}
	Since $I$ is cyclic, $m_T(\ss) \leqslant 1$ necessarily. Now $\Ind_{\Is}^I \es$ is either $0$, $\cc[C_{n_{\ss}}]$ or $\cc[C_{2n_{\ss}}] \ominus \cc[C_{n_{\ss}}]$ respectively, from which the proposition follows.
\end{proof}

\begin{corollary}
	Let $X^{ev}$ denote the set of proper even, non-\"{u}bereven clusters and let $\ord_2$ denote the $2$-adic valuation. Let $\mu =1+ \ord \varepsilon_{\cR}$ if $\cR$ is even or a cotwin and $0$ otherwise, where $\ord \varepsilon_{\cR}$ is the order of $\varepsilon_{\cR}$ as a character of $I$. Then $$m_T \equiv \mu + \# \left\lbrace  \ss \in X^{ev}/I: \ord_2\left(n_{\ss}(v(c_f)+\sum\limits_{r \not\in \ss}d_{\{r\} \wedge \ss}) \right) \geq 1 \right\rbrace  + \sum\limits_{\ss \in X^{ev}/I} \!\!\! n_{\ss}  \quad \bmod{2},$$ where $\{r\} \wedge \ss$ is the smallest cluster containing $\{r\}$ and $\ss$.
\end{corollary}

\begin{proof}
Note that $m_T=-m_T(\cR) + \sum\limits_{\ss \in X^{ev}/I} m_T(\ss)$ and hence $m_T \equiv m_T(\cR) + |\{\ss \in X^{ev}/I: \es=\mathds{1} \}| + \sum\limits_{\ss \in X^{ev}/I} n_{\ss} \bmod{2}$ by Proposition \ref{toricprop}. If $\varepsilon_{\cR}$ is not the zero character (i.e. $\ss$ is even or a cotwin), then $m_T(\cR)=1$ if $\varepsilon_{\cR}$ is quadratic and $m_T(\cR)=1$ if $\varepsilon_{\cR}$ is trivial on inertia, hence $m_T(\cR) \equiv \mu \bmod{2}$.

The statement now follows since for $\ss$ in $X^{ev}$, $\es=\mathds{1}$ is equivalent to $\ord_2\left(n_{\ss}(v(c_f)+\sum\limits_{r \not\in \ss}d_{\{r\} \wedge \ss}) \right) \geq 1$ by \cite[Remark 1.13]{DDMM}.
\end{proof}

\begin{remark}
	Note that $\mu \equiv v(c_f) \bmod{2}$ if $\cR$ is even and $\mu=0$ if $\cR$ is odd and not a cotwin. If $\cR$ is an odd cotwin, then one can still determine $\cR^*$ from the cluster picture and hence determine $\mu$ as well, but this does not have such a compact formulation.
\end{remark}

\section{Elliptic curves}

In this section, we return to the more familiar territory of elliptic curves. We will use the cluster pictures to recover the inertia representation of $E$, along with its Kodaira type. Ideally, we would apply Theorem \ref{M2D2rep} for this purpose, except it is only stated in \cite{DDMM} for curves of genus at least $2$. Their proof should generalise to this situation,\footnote{The reason for this assumption is to avoid genus one curves without a rational point.} but we instead choose to prove it directly with a more elementary approach. For these reasons, we shall only consider elliptic curves in the form $$y^2= f(x),$$ where $f$ is a cubic. In this setting, there are precisely two cluster configurations of the roots, namely:
\vspace*{5pt}

\noindent\begin{minipage}{0.45\linewidth}
\begin{flushright}
\clusterpicture
\Root(0.00,2)B(a1);     
\Root(0.40,2)B(a2);     
\Root(0.80,2)B(a3);      
\ClusterL(c1){(a1)(a2)(a3)}{};
\endclusterpicture
\end{flushright}
\end{minipage}
\begin{minipage}{0.1\linewidth}
\begin{center}
and
\end{center}
\end{minipage}
\begin{minipage}{0.45\linewidth}
\clusterpicture
\Root(0.00,2)B(a1);     
\Root(0.40,2)B(a2);     
\Root(1.00,2)A(a3);      
\ClusterL(c1){(a1)(a2)}{};
\ClusterL(c3){(c1)(a3)}{};
\endclusterpicture .
\end{minipage}

\vspace*{5pt}

These two possibilities of cluster configurations happen to distinguish between potentially good and potentially multiplicative reduction, as we shall now prove.

\begin{lemma}
\label{potred}
	Let $E/K:y^2=f(x)$ be an elliptic curve and suppose that $f$ is monic. Moreover, assume that $p \neq 2$. Then:
	\begin{enumerate}
		\item The cluster picture (without depths) of $E$ is independent of the choice of model;
		\item $E$ has potentially good reduction if and only if the roots of $f$ are equidistant.
	\end{enumerate}
\end{lemma}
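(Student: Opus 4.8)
The plan is to handle the two parts separately, with part (i) essentially a consequence of the classification of $\overline{K}$-isomorphisms of such models together with invariance of the valuation under the relevant coordinate changes, and part (ii) a translation of potential good reduction into a statement about the cluster picture via the $j$-invariant.

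For part (i), recall that any two monic models $y^2 = f(x)$, $y^2 = g(x)$ of the same elliptic curve over $\overline{K}$ differ by a substitution $x \mapsto u^2 x + r$, $y \mapsto u^3 y$ with $u \in \overline{K}^\times$, $r \in \overline{K}$ (this is where $p \neq 2$ is used, so that every Weierstrass model can be put in this form and the only admissible changes are of this shape). Under such a change the roots $r_1, r_2, r_3$ of $f$ transform to $u^{-2}(r_i - r)$, so the pairwise valuations $v(r_i - r_j)$ all shift by the constant $-2v(u)$. Since the cluster picture without depths records only the \emph{relative} order of the three pairwise distances (equivalently, whether two of them are strictly larger than the third), this data is unchanged. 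I would just note that there are only the two configurations listed, and both are stable under a uniform shift of all depths.

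For part (ii), the cleanest route is through the $j$-invariant: $E$ has potentially good reduction if and only if $v(j_E) \geq 0$. Writing $j_E$ in terms of the roots, one has $j_E = c \cdot \frac{(\text{symmetric function of differences})^3}{\prod_{i<j}(r_i-r_j)^2}$ — concretely, with $\lambda$ a cross-ratio of the three roots and $\infty$, $j = 2^8 \frac{(\lambda^2-\lambda+1)^3}{\lambda^2(\lambda-1)^2}$, and $v(\lambda - \text{anything})$ is controlled by the pairwise valuations of the $r_i$. The computation splits into the two cluster cases. If the roots are equidistant, say $v(r_i - r_j) = \delta$ for all $i \neq j$ (after translating, using part (i), we may assume the depth is any convenient value), then $\lambda$ and $1-\lambda$ are both units times $p^0$-scale quantities with $v(\lambda) = v(\lambda - 1) = 0$, so both numerator and denominator of $j$ have valuation $0$ and $v(j) = 0 \geq 0$: potentially good reduction. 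Conversely, in the non-equidistant case two roots, say $r_1, r_2$, are strictly closer to each other than to $r_3$; then $\lambda \to 0$ or $\infty$ $p$-adically, $v(\lambda^2(\lambda-1)^2) < 0$ while the numerator stays bounded, forcing $v(j) < 0$ and hence potentially multiplicative reduction. I would present this as: $E$ has potentially multiplicative reduction $\iff v(j)<0 \iff$ some pairwise distance among the roots strictly exceeds another $\iff$ the roots are not equidistant.

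The main obstacle is bookkeeping in part (ii): making sure the valuation of $j$ (or of the cross-ratio) is computed correctly and that the "strictly closer" configuration genuinely forces $v(j) < 0$ rather than merely $\leq 0$, including the degenerate-looking case where two differences coincide but the third is smaller. This is a short but slightly fiddly valuation estimate; everything else is formal. One should also be mildly careful that "potentially good reduction $\iff v(j) \geq 0$" is exactly the criterion one wants (true for $p \neq 2,3$; for $p = 2$ excluded and $p = 3$ it still holds), and that translating to a monic cubic loses no generality — which is already granted by the hypothesis.
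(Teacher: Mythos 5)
Your proposal is correct in outline, and part (i) is essentially the paper's argument (coordinate changes $x \mapsto u^2x+r$, $y \mapsto u^3y$ shift all pairwise valuations by $-2v(u)$, preserving the depth-free configuration). For part (ii), however, you take a genuinely different route: the paper does not use the $j$-invariant at all. It works with the discriminant criterion — good reduction iff there is a model with integral roots and unit discriminant — notes that $v(\Delta_E)=2\sum_{i<j}v(r_i-r_j)$ since $p\neq 2$, concludes potential multiplicativity in the non-equidistant case from part (i), and in the equidistant case explicitly constructs a model $Y^2=F(X)$ over an extension $L$ whose ramification degree is divisible by $2\denom v(r_1-r_2)$, making the root distances integral and then zero, hence unit discriminant. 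Your $j$-invariant/cross-ratio argument also works: with $\lambda=(r_3-r_1)/(r_2-r_1)$ one gets $v(j)\geq 0$ in the equidistant case and $v(j)=2v(\lambda)<0$ (or $-2v(\lambda)<0$ in the other normalisation) otherwise, so the equivalence with potential good reduction via integrality of $j$ gives the lemma. Two small inaccuracies to fix if you write it up: in the equidistant case you can only claim $v(j)\geq 0$, not $v(j)=0$, since $\lambda^2-\lambda+1$ may have positive valuation (e.g.\ when $\lambda$ reduces to a primitive sixth root of unity, as for $j=0$); and in the non-equidistant case with $\lambda\to\infty$ the numerator does not stay bounded — it has valuation $6v(\lambda)$ against $4v(\lambda)$ for the denominator, which is what forces $v(j)=2v(\lambda)<0$. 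What each approach buys: yours is shorter and avoids constructing models over extensions, relying only on the standard integrality criterion for $j$; the paper's construction is more hands-on but produces the explicit extension over which good reduction is attained, which feeds directly into the later computation of the ramification degree and the Kodaira-type table in Theorem \ref{ecgood}.
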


\begin{proof}
i) Observe that any other model for $E$ of the form $y_1^2=f_1(x_1)$ is obtained via a substitution of the form $y_1=u^3y, x_1=u^2x+s$, $u,s \in K, u \neq 0$. This changes the $p$-adic distances between the roots by a factor of $v(u^2)$, hence preserves whether or not the roots are equidistant.

ii) First note that $E$ has good reduction if and only if there exists a model of $E$ where the roots are integral and the discriminant $\Delta_E$ is a unit. As $p \neq 2$, $v(\Delta_E)=2(v(r_1-r_2)+v(r_1-r_3)+v(r_2-r_3))$ where $r_1,r_2,r_3$ are the roots of $f$. Note that if the roots are integral, then all valuations are non-negative. Suppose the roots are not equidistant. Then the discriminant $\Delta_E$ is not a unit and this is true for all models of $E$ by i). Moreover, the cluster picture is unaffected by base change (only depths are affected) and hence $E$ must have potentially multiplicative reduction.

Now assume that the roots $r_1, r_2, r_3$ of $f$ are equidistant. We shall find a minimal Weierstrass model of $E$ over some finite extension $M$ which has unit discriminant. Observe that $v_M(\Delta_E)=6v_M(r_1-r_2)$ so $v_M(\Delta_E)=0$ if and only if $v_M(r_1-r_2)=0$, where $v_M$ is the normalised valuation of $M$.

Let $L \supset K(\cR)$ have ramification degree divisible by $2\denom v(r_1-r_2)$ over $K$ and note that $v_L(r_1-r_2)=2m \in 2\zz$. We claim that $E$ has good reduction over $L$.

Let $\pi_L$ be a uniformiser of $L$. Make the change of variables $X=\pi_L^{-2m}x, Y=\pi_L^{-3m}y$ to get an integral model for $E:Y^2=F(X)$ over $L$. Note the roots $r_1', r_2', r_3'$ of $F$ are still equidistant by i), and that $v_L(r_1'-r_2')=0$. Hence $E$ has unit discriminant and good reduction over $L$.
\end{proof}

We shall now study each of the two cluster pictures individually to infer more information.

\subsection{Potentially good}

We start with the case of potentially good reduction and hence have the cluster picture below. We have a single proper cluster $\cR$ with depth $d_{\cR}$.

\begin{center}
\clusterpicture
\Root(0.00,2)B(a1);     
\Root(0.40,2)B(a2);     
\Root(0.80,2)B(a3);      
\ClusterL(c1){(a1)(a2)(a3)}{$d_{\cR}$};
\endclusterpicture
\end{center}

\begin{theorem}
\label{ecgood}
	Suppose $p \geqslant 5$ and let $E/K$ be an elliptic curve with potentially good reduction. Let $d_{\cR}$ be the depth of the single proper cluster. Then the Kodaira type, inertia representation $H^1_{\acute{e}t}(E/\overline{K},\qq_{\ell})$, $\ell \neq p$, and root number of $E/K$ is given in the following table.
	\begin{center}
	\begin{tabular}{c|c|c|c}
		$d_{\cR} \mod{2}$ & Kodaira type & $H^1_{\acute{e}t}(E/\overline{K},\qq_{\ell})$ & $W(E/K)$ \\ \hline 
		$0$ & $I_0$ & $2\rho_1$ & 1 \\
		$\frac{1}{3}$ & $II$ & $\rho_6$ & $\left(\frac{-1}{q}\right)$ \\
		$\frac{1}{2}$ & $III$ & $\rho_4$ & $\left(\frac{-2}{q}\right)$ \\
		$\frac{2}{3}$ & $IV$ & $\rho_3$ & $\left(\frac{-3}{q}\right)$ \\
		$1$ & $I_0^*$ & $2\rho_2$ & $\left(\frac{-1}{q}\right)$ \\
		$\frac{4}{3}$ & $IV^*$ & $\rho_3$ & $\left(\frac{-3}{q}\right)$ \\
		$\frac{3}{2}$ & $III^*$ & $\rho_4$ & $\left(\frac{-2}{q}\right)$ \\
		$\frac{5}{3}$ & $II^*$ & $\rho_6$ & $\left(\frac{-1}{q}\right)$
	\end{tabular}
	\end{center}
\end{theorem}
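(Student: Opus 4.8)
The plan is to reduce everything to the minimal Weierstrass model over the minimal field where $E$ attains good reduction, and then read off the three invariants from the ramification degree of that field, which is controlled by $d_{\cR}$. First I would note that by Lemma \ref{potred}, the three roots $r_1,r_2,r_3$ of $f$ are equidistant, so $\mu := v(r_i-r_j)$ is independent of $i\ne j$; moreover the discriminant satisfies $v(\Delta_E)=6\mu$ (for $p\ne 2,3$ one may work with the short Weierstrass form and the usual formula $\Delta = 16\prod_{i<j}(r_i-r_j)^2$). The depth of the single proper cluster is exactly $d_{\cR}=\mu$. Following the construction in the proof of Lemma \ref{potred}ii), pass to a totally ramified extension $L/K$ of degree $n:=\denom(d_{\cR})$; after the change of variables $X=\pi_L^{-2m}x$, $Y=\pi_L^{-3m}y$ (where $m$ is chosen so that $nm \equiv$ the $L$-valuation of $r_i-r_j$ becomes $\equiv 0$), one gets a model with unit discriminant, hence good reduction over $L$. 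Crucially $n\in\{1,2,3,4,6\}$ since $\denom(d_{\cR})$ is the order of a tame inertia quotient acting on a genus-one curve, and these are exactly the orders appearing; more concretely, $n=\denom(d_{\cR})$ takes the value $1,6,4,3,2,3,4,6$ as $d_{\cR}\bmod 2$ runs through $0,\tfrac13,\tfrac12,\tfrac23,1,\tfrac43,\tfrac32,\tfrac53$, which accounts for the eight rows of the table.

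Next I would identify the inertia representation. The inertia group $I$ acts through its tame quotient, which is cyclic of order $n=\denom(d_{\cR})$: indeed $E$ acquires good reduction precisely over $L$, so $I$ factors through $\Gal(L/K)\cong C_n$, and by the criterion of N\'eron--Ogg--Shafarevich the action on $H^1_{\acute{e}t}(E/\overline K,\qq_\ell)$ is faithful on this quotient. For $n=1$ (type $I_0$) the action is trivial, giving $2\rho_1$. For $n=2$ (type $I_0^*$) the quadratic twist has good reduction, so $H^1$ is the $2$-dimensional representation on which $C_2$ acts by $-1$, i.e. $2\rho_2$. For $n=3,4,6$ the representation is $2$-dimensional, acted on faithfully by $C_n$, and since the Weil pairing forces $\det = $ the cyclotomic character which is unramified here, $H^1\cong\rho_n$ for $n=3,4$ and $\cong\rho_6$ for $n=6$ — here one uses that $\rho_3,\rho_4$ are the unique faithful $2$-dimensional rational representations of $C_3,C_4$ with trivial determinant on inertia, and that the two eigenvalues of a generator are primitive $n$-th roots of unity multiplying to $1$, forcing them to be $\zeta_n,\zeta_n^{-1}$ (possible only for $n\le 6$, $n\ne 5$). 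The Kodaira type is then determined by Tate's algorithm via $(v(\Delta_E),n)$: $v(\Delta_E)=6d_{\cR}\bmod 12$ takes values $0,2,3,4,6,8,9,10$ in the eight rows, and $(v(\Delta_E)\bmod 12, n)$ pins down $I_0, II, III, IV, I_0^*, IV^*, III^*, II^*$ respectively by the standard classification of potentially-good reduction types in residue characteristic $\ge 5$.

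Finally I would compute the root number by plugging into Theorem \ref{rootthm}: here $\rho_T=0$ (potentially good reduction has no toric part), $\rho_B = H^1_{\acute{e}t}$, so $W(E/K)=\prod_e W_{q,e}^{m_e}$ where $m_e$ counts the $\rho_e$-summands. For $I_0$, $W=1$; for $I_0^*$ (one copy of $\rho_2$, $e=2$) $W=\left(\tfrac{-1}{q}\right)$; for $II,II^*$ ($e=6=2\cdot 3$, $3\equiv 3\bmod 4$) $W=\left(\tfrac{-1}{q}\right)$; for $III,III^*$ ($e=4$) $W=\left(\tfrac{-2}{q}\right)$; for $IV,IV^*$ ($e=3$) $W=\left(\tfrac{q}{3}\right)=\left(\tfrac{-3}{q}\right)$ by quadratic reciprocity. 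This matches the last column. The main obstacle is the middle step: pinning down which rational $2$-dimensional representation of $C_n$ actually occurs (rather than just its order), i.e. ruling out that a generator of inertia could act with eigenvalues $\zeta_n, \zeta_n$ or other non-conjugate pairs; this is where I would lean on the determinant/Weil-pairing constraint together with the fact that $H^1$ is defined over $\qq_\ell$ hence its character is rational-valued, which together force the eigenvalue pair $\{\zeta_n,\zeta_n^{-1}\}$ and hence $\rho_n$. An alternative, perhaps cleaner, route for this step is simply to exhibit for each row an explicit elliptic curve over $K$ (e.g. $y^2=x^3+\pi_K^a x$ or $y^2=x^3+\pi_K^b$ for suitable $a,b$) realising that depth, compute its $H^1$ and Kodaira type directly from Tate's algorithm, and invoke the fact that these invariants depend only on $d_{\cR}\bmod 2$ (which itself follows from Lemma \ref{potred} together with the observation that twisting by $\pi_K$ shifts $d_{\cR}$ by an integer and cycles the Kodaira type $X \leftrightarrow X^*$).
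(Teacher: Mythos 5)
Your overall strategy is the same as the paper's (normalise the model, pass to the minimal extension of good reduction, note that its ramification degree determines $H^1_{\acute{e}t}$ as a tame cyclic inertia representation, get the Kodaira type from the discriminant table and the root number from Theorem \ref{rootthm}), but the pivotal quantitative step is wrong as written. You set $n:=\denom(d_{\cR})$ and claim a totally ramified extension $L/K$ of degree $n$ yields good reduction; this fails whenever the numerator of $d_{\cR}$ is odd, because the substitution $X=\pi_L^{-2m}x$, $Y=\pi_L^{-3m}y$ can only remove an \emph{even} amount from $v_L(r_i-r_j)$ --- this is exactly why the proof of Lemma \ref{potred} takes ramification degree divisible by $2\denom v(r_1-r_2)$, not $\denom v(r_1-r_2)$. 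Concretely, for $d_{\cR}\equiv 1$ you have $\denom(d_{\cR})=1$ yet the curve is type $I_0^*$ and needs a ramified quadratic extension; for $d_{\cR}\equiv\tfrac12$ you have $\denom(d_{\cR})=2$ yet type $III$ needs degree $4$. Indeed $\denom(d_{\cR})\in\{1,2,3\}$ always (Theorem \ref{main}i applied to three roots), so your assertion that ``$n=\denom(d_{\cR})$ takes the values $1,6,4,3,2,3,4,6$'' is internally inconsistent: those are the correct minimal ramification degrees, but they are not the denominators, and your argument never actually derives them. The paper closes precisely this gap by citing Rohrlich: the degree of the minimal extension is $12/\gcd(12,v(\Delta_f))=12/\gcd(12,6d_{\cR})$ (equivalently $\lcm(\denom d_{\cR},\denom\tfrac{3d_{\cR}}{2})$, as in the remark following the theorem); alternatively you could repair your argument by demanding $6e_Ld_{\cR}\equiv 0 \bmod{12}$, i.e.\ $e_Ld_{\cR}\in 2\zz$, and taking the minimal such $e_L$.

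Once the correct degrees $1,6,4,3,2,3,4,6$ are in hand, the rest of your argument (faithfulness of the cyclic quotient via N\'eron--Ogg--Shafarevich, rationality/self-duality forcing the eigenvalue pair $\{\zeta_n,\zeta_n^{-1}\}$, Kodaira type from $v(\Delta)\bmod 12$ on a minimal model, root numbers via Theorem \ref{rootthm}) is sound and in fact supplies detail the paper leaves implicit. Two smaller points: the type $I_0^*$ row has $H^1=2\rho_2$, not ``one copy of $\rho_2$'', and since Theorem \ref{rootthm} as quoted does not specify the integers $m_e$, your bookkeeping of ``$m_e$ counts the $\rho_e$-summands'' needs justification (the final values agree with Rohrlich's classical table, so the column is correct); also the identity $v(\Delta)=6d_{\cR}$ feeding into the Kodaira-type step requires the normalisation $0\leqslant d_{\cR}<2$ together with minimality of the resulting model, which the paper arranges explicitly before invoking the table in Silverman.
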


\begin{proof}
	Observe that by a change of variables over $K$, we may alter $d_{\cR}$ by an element of $2\zz$ and hence assume that assume that $0 \leqslant d_{\cR}<2$. Moreover, by a similar argument to that used in Lemma \ref{potred}, we may further assume that our model is a minimal Weierstrass model.

	With this assumption, we can now use the table in \cite[p.365]{Sil13} to recover the Kodaira type from the discriminant of the model. Note that the root number is computable from $H^1_{\acute{e}t}(E/\overline{K},\qq_{\ell})$ via Theorem \ref{rootthm}, so we only concentrate on computing $H^1_{\acute{e}t}(E/\overline{K},\qq_{\ell})$.
	
	By assumption on $p$, $E$ attains good reduction over a tamely ramified extension. Moreover, the ramification degree of a minimal extension completely determines $H^1_{\acute{e}t}(E/\overline{K},\qq_{\ell})$ as an inertia representation.

Applying a result of Rohrlich \cite[Proposition 2(v)]{Roh93}, the ramification degree is equal to $$\dfrac{12}{\gcd(12, v(\Delta_f))}=\dfrac{12}{\gcd(12,6d_{\cR})},$$ where $\Delta_f$ is the discriminant of $f$. Since $\denom d_{\cR} \leqslant 3$ by Theorem \ref{main}i, we can break into cases to see how $H^1$ depends on $d_{\cR}$; considering each case now yields the result.
\end{proof}

\begin{remark}
	One could also calculate $H^1_{\acute{e}t}(E/\overline{K},\qq_{\ell})$ by computing the ramification degree of $K(E[2],\Delta_f^{1/4})$ (cf. \cite[p.362]{Kra90}), together with Theorem \ref{main}i. The output in this case is $\lcm (\denom d_{\cR}, \denom \frac{3d_{\cR}}{2})$ which agrees with our computation.
\end{remark}

\subsection{Potentially multiplicative}

We now consider the potentially multiplicative case, where the cluster picture is as follows:

\begin{center}
\clusterpicture
\Root(0.00,2)B(a1);     
\Root(0.40,2)B(a2);     
\Root(1.00,2)A(a3);      
\ClusterL(c1){(a1)(a2)}{};
\ClusterL(c3){(c1)(a3)}{};
\endclusterpicture
\end{center}

We label the roots $r_1,r_2,r_3$ from left to right and define the two proper clusters as $\cR$, $\ss_1=\{r_1,r_2\}$.  Note that when $E$ has potentially multiplicative reduction, the representation $H^1_{\acute{e}t}(E/\overline{K},\qq_{\ell})$ is necessarily tamely ramified whenever $p \neq 2$ so we do not need to impose any further restrictions.

\begin{theorem}
\label{ecmult}
	Let $E/K: y^2=f(x)$ be an elliptic curve and suppose that $f$ is monic. Then $E/K$ has multiplicative reduction if and only if $d_{\cR} \in 2\zz$.
\end{theorem}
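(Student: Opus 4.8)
The plan is to reduce everything to a single numerical invariant: the valuation of the $j$-invariant, or equivalently the valuation of the discriminant $\Delta_f$ of the cubic $f$. Recall that for a monic cubic $f$ with roots $r_1,r_2,r_3$ we have $\Delta_f = (r_1-r_2)^2(r_1-r_3)^2(r_2-r_3)^2$, so $v(\Delta_f) = 2\bigl(v(r_1-r_2)+v(r_1-r_3)+v(r_2-r_3)\bigr)$ since $p\neq 2$. By Lemma \ref{potred}, the potentially multiplicative case is exactly the case where the roots are \emph{not} equidistant, which for a cubic means precisely that two of the pairwise valuations agree and the third is strictly larger; in the labelling above, $v(r_1-r_2) = d_{\ss_1} > d_{\cR} = v(r_1-r_3) = v(r_2-r_3)$. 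Thus $v(\Delta_f) = 2(d_{\ss_1} + 2d_{\cR})$, and this is the key bookkeeping identity.

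First I would recall the standard criterion (e.g.\ from the theory of the Tate curve, or from Silverman): an elliptic curve $E/K$ with potentially multiplicative reduction has split or nonsplit \emph{multiplicative} reduction (as opposed to additive) if and only if $v(\Delta_E) \in 12\zz$ for a minimal model, equivalently if and only if the minimal extension over which $E$ acquires multiplicative reduction is $K$ itself. The clean way to phrase this intrinsically is: $E$ has (potentially) multiplicative reduction already over $K$ if and only if $v(j_E) < 0$ and $-c_6/\Delta$ (or the relevant quadratic twist class) is trivial; but since the theorem only asks for multiplicative vs.\ additive, the cleanest route is via the valuation. Concretely, for a Weierstrass model $y^2 = f(x)$ with $f$ monic the naive discriminant is $16\Delta_f$, a unit times $\Delta_f$; applying an admissible change of variables $x \mapsto u^2 x$, $y \mapsto u^3 y$ multiplies $\Delta$ by $u^{-12}$, so the class of $v(\Delta_f)$ modulo $12$ is the model-independent invariant. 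Then $E/K$ has multiplicative (not additive) reduction precisely when this class is $0$, i.e.\ $v(\Delta_f) \in 12\zz$.

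Combining the two displays: $E/K$ has multiplicative reduction $\iff v(\Delta_f) = 2(d_{\ss_1} + 2d_{\cR}) \in 12\zz \iff d_{\ss_1} + 2d_{\cR} \in 6\zz$. It remains to see that this last condition is equivalent to $d_{\cR} \in 2\zz$. Here I would invoke Theorem \ref{main}(i): in the potentially multiplicative configuration, the only proper clusters are $\cR$ and $\ss_1$, so $\lcm(\denom d_{\cR}, \denom d_{\ss_1}) = |I|$, and more precisely the structure of the cluster picture forces $\denom d_{\cR} \mid \denom d_{\ss_1}$ and $\denom d_{\ss_1} \mid 2$ (the cluster $\ss_1$ has two children, both singletons, which are swapped or fixed by $I_{\ss_1}$, so $\denom(d_{\ss_1} n_{\ss_1}) \in \{1,2\}$; unwinding the possibilities with Theorem \ref{main}(iii)-(iv) shows $\denom d_{\cR}, \denom d_{\ss_1} \in \{1,2\}$). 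Given $d_{\ss_1} \in \tfrac12\zz$, one checks directly: if $d_{\cR} \in 2\zz$ then we still need $d_{\ss_1} + 2d_{\cR} \equiv d_{\ss_1} \pmod 6$ to be integral, which holds iff $d_{\ss_1} \in \zz$ — so one must also rule out $d_{\ss_1} \in \tfrac12 + \zz$ in the multiplicative case, or absorb it by choosing the model appropriately. This is the delicate point: the honest statement (and presumably what the author intends) is that after the harmless translation making $0 \le d_{\cR} < 2$ and clearing any half-integer contribution into a twist, $E/K$ has multiplicative reduction iff $d_{\cR}$ itself is an even integer. So the main obstacle I anticipate is not the geometry but pinning down exactly which normalisation of the model is being used and verifying that the half-integer depth of $\ss_1$ does not create a spurious additive case — i.e.\ checking that the congruence $d_{\ss_1} + 2d_{\cR} \in 6\zz$ collapses cleanly to $d_{\cR} \in 2\zz$ under the standing hypotheses on $f$. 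Everything else is the discriminant computation above together with citations to the standard multiplicative-reduction criterion.
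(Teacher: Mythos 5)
There is a genuine gap: the criterion you build the whole argument on is false. For a curve with potentially multiplicative reduction, the minimal discriminant valuation is $-v(j_E)$ in the multiplicative case and $-v(j_E)+6$ in the additive case, so it is an essentially arbitrary positive integer; multiplicative reduction is \emph{not} detected by $v(\Delta_f)\in 12\zz$. Concretely, $y^2=x(x-1)(x-p)$ over $\qp$ has $d_{\cR}=0$, $d_{\ss_1}=1$, hence $v(\Delta_f)=2(d_{\ss_1}+2d_{\cR})=2\notin 12\zz$, yet it is a minimal model with a nodal reduction, i.e.\ multiplicative reduction (type $I_2$). You seem to have transplanted the potentially \emph{good} criterion (Rohrlich's $12/\gcd(12,v(\Delta))$, which the paper does use in Theorem \ref{ecgood}) into the potentially multiplicative setting, where it does not apply. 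The residual $d_{\ss_1}$-dependence you flag at the end (``$d_{\ss_1}+2d_{\cR}\in 6\zz$'') is a symptom of this: the correct answer cannot depend on $d_{\ss_1}$ at all, and no choice of normalisation will make your congruence collapse to $d_{\cR}\in 2\zz$, because $v(\Delta_f)$ alone simply does not carry the multiplicative-versus-additive information.

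To repair the argument you must bring in more than the discriminant. One valuation-theoretic fix: for this model $v(c_4)\equiv 2d_{\cR}\pmod 4$, and among potentially multiplicative curves the reduction is multiplicative iff $v(c_4)\equiv 0\pmod 4$ (minimal models have $v(c_4)=0$ for $I_n$ and $v(c_4)=2$ for $I_n^*$), which gives exactly $d_{\cR}\in 2\zz$. The paper instead argues directly: Theorem \ref{main}iii forces $d_{\cR}\in\zz$ since both children of $\cR$ are fixed; a substitution $x\mapsto \pi^{-2k}x$, $y\mapsto\pi^{-3k}y$ shifts $d_{\cR}$ by $2\zz$, so one may take $d_{\cR}\in\{0,1\}$; the root $r_3\notin\ss_1$ is Galois-invariant, hence in $K$, and $y^2=f(x+r_3)$ is a minimal integral model, whose reduction has a node (two distinct roots) iff $d_{\cR}=0$ and a cusp iff $d_{\cR}=1$. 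Either of these routes closes the argument; yours as written does not.
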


\begin{proof}
	Observe first that $d_{\cR} \in \zz$ by Theorem \ref{main}iii since both childern of $\cR$ are necessarily fixed. We now show that we may suppose that $f \in \mathcal{O}_K[x]$ and $d_{\cR} \in \{0,1\}$. Note that we may alter $d_{\cR}$ by an element of $2\zz$ via a substitution of the form $x_1=\pi^{-2k}x, y_1=\pi^{-3k}y$, $k \in \zz$, and moreover this is the only method of changing the depths. Hence we may suppose that $d_{\cR} \in \{0,1\}$. Consider $r_3$, the unique root not in $\ss_1$. This is necessarily Galois-invariant and hence $r_3 \in K$; the model $y^2=f(x+r_3)$ is the required integral model for $E/K$ and furthermore is a minimal Weierstrass model. 
	
	By Lemma \ref{potred}ii, $E/K$ has potentially multiplicative reduction. Moreover, it is multiplicative if and only if the reduction of $f$ has two distinct roots in the (algebraic closure) of the residue field of $K$; this is true if and only if $d_{\cR}=0$.
\end{proof}

\begin{remark}
\label{kodmult}
	Since the depths only ``see'' the inertia action, we are unable to distinguish between split and nonsplit multiplicative reduction. Indeed, in \cite{DDMM}, the authors introduce extra labelling on the cluster picture to record the Frobenius action which would then suffice to differentiate between these two reduction types. We may however recover the Kodaira type again using the valuation of the discriminant.
\end{remark}

Before we close this section, we briefly note that we have proved the validity of the inertia formula of Theorem \ref{M2D2rep} for elliptic curves.

\begin{theorem}
\label{kodthm}
	Suppose $p \geqslant 5$. Then the inertia representation formula given in Theorem \ref{M2D2rep} for $H^1_{\acute{e}t}(E/\overline{K},\qq_{\ell})$, $\ell \neq p$, also holds for elliptic curves $E/K$. Moreover, the depths of the associated cluster picture uniquely determine the Kodaira type of $E/K$.
\end{theorem}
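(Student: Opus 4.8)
The plan is to combine the two preceding subsections, which have already done the substantive work, into a single bookkeeping argument. For the inertia representation formula of Theorem~\ref{M2D2rep} in the elliptic case, I would split along the two cluster configurations identified at the start of \S\ref{rootnosection} (equidistant roots versus the nested picture), exactly as in Lemmas~\ref{potred}, \ref{ecgood} and Theorem~\ref{ecmult}. In the potentially good case, Theorem~\ref{ecgood} already computes $H^1_{\acute{e}t}(E/\overline{K},\qq_\ell)$ directly as $\rho_m$ (or $2\rho_1$, $2\rho_2$) for the appropriate $m=\denom d_{\cR}$ (resp.\ $\lcm$-type expression); I would then check that feeding the single proper cluster $\cR$ into the formula of Theorem~\ref{M2D2rep}/Theorem~\ref{repthm} produces the same answer. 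Here $\ss=\cR$ is odd of size $3$, so $\es=0$, $\ss^{odd}$ is empty or a singleton depending on parity of $d_{\cR}$, $n_{\cR}=1$, and $\lambda_{\cR}=\tfrac12\mu_{\cR}+\tfrac12 d_{\cR}|\cR^{odd}|$ — one reads off $\gamma_{\cR}$ of order $t=\denom \lambda_{\cR}$ and verifies that $\Ind_{I_{\cR}}^I V_{\cR}=\gamma_{\cR}\otimes(\cc[\cR^{odd}]\ominus\mathds 1)$ collapses to the tabulated $\rho_t$. In the potentially multiplicative case, $\cR$ and $\ss_1$ are both even, $\cR$ is übereven (its children, of sizes $2$ and $1$… — wait, $\{r_3\}$ is a singleton, so $\cR$ is in fact \emph{not} übereven), so $X=\{\ss_1\}$ possibly together with $\cR$; one checks $\es$ for each, notes $n_{\ss_1}=1$ since $\ss_1$ is the unique child of $\cR$ that can be nontrivially permuted only by an element fixing $\cR$, and matches the $\sp(2)$ contribution against the known behaviour of the Tate curve.

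For the Kodaira-type claim, the strategy is to pass to a minimal Weierstrass model (possible over $K$ after the integral substitutions used in Lemmas~\ref{potred} and Theorem~\ref{ecmult}, since the root $r_3$ not in $\ss_1$, resp.\ the generic argument, lets us translate into $\mathcal O_K[x]$) and invoke Tate's algorithm via the discriminant valuation. In the potentially good case, Theorem~\ref{ecgood} already exhibits the map $d_{\cR}\bmod 2 \mapsto$ Kodaira type through $v(\Delta_f)=6d_{\cR}$ and the table in \cite[p.365]{Sil13}; since $\denom d_{\cR}\leqslant 3$ by Theorem~\ref{main}(i) the eight possibilities $d_{\cR}\in\{0,\tfrac13,\tfrac12,\tfrac23,1,\tfrac43,\tfrac32,\tfrac53\}$ are exhaustive, and the map is visibly injective. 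In the potentially multiplicative case, $d_{\cR}\in\zz$ by Theorem~\ref{main}(iii); Theorem~\ref{ecmult} shows $d_{\cR}\in 2\zz$ detects multiplicative versus additive (potentially multiplicative) reduction, and on a minimal model $v(\Delta)$ equals (up to the bounded correction from the minimal translation) a fixed affine function of $d_{\cR}$ — in the additive potentially multiplicative case this puts $E$ in type $I_n^*$ with $n$ read off from $d_{\cR}$, while $d_{\cR}=0$ gives $I_0$/$I_n$ with $n$ invisible to the depths. So the depths determine the Kodaira \emph{symbol} in the good and $I_n^*$ cases outright, and the only ambiguity (split vs.\ nonsplit, and the subscript $n$ in genuine multiplicative reduction) is precisely the Frobenius information the depths provably cannot see, as flagged in Remark~\ref{kodmult}; I would phrase the theorem's conclusion accordingly, or simply note that ``Kodaira type'' here means the reduction type up to that unavoidable Frobenius ambiguity.

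The main obstacle I anticipate is not conceptual but the verification that the general formula of Theorem~\ref{repthm}, with all of its $S_{d,t}$, $\alpha_{d,t,s}$, $\beta$ and floor-function machinery, genuinely degenerates to the one-line answers in the two elliptic tables — in particular confirming that the second ($\sp(2)$-flavoured) summand and the $\ominus\Ind\es$ term vanish or combine correctly when $n_{\ss}\in\{1\}$ and $|\ss^{odd}|\leqslant 1$. This is a finite check over at most the eight depth-classes above, so it is routine but must be done carefully; I would tabulate it rather than argue in prose. A secondary subtlety is ensuring that the elementary proofs of Lemmas~\ref{potred}, \ref{ecgood} and Theorem~\ref{ecmult} together actually \emph{establish} Theorem~\ref{M2D2rep} for elliptic curves rather than merely being consistent with it — i.e.\ that every tame elliptic $E/K$ with $p\geqslant 5$ falls into one of the two configurations and that no case was silently omitted (notably the übereven/$I_0^*$ borderline and the $d_{\cR}\equiv 0$ trivial case); a short exhaustiveness remark referencing Lemma~\ref{potred}(ii) handles this.
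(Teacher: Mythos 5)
Your overall strategy -- a case-by-case check of the formula of Theorem~\ref{M2D2rep}/\ref{repthm} against Theorems~\ref{ecgood} and \ref{ecmult}, split along the two possible cluster configurations, with the Kodaira types read off from the discriminant valuation on a minimal model -- is exactly the route the paper takes. However, your treatment of the Kodaira-type claim in the potentially multiplicative case contains a genuine error that causes you to prove a strictly weaker statement than the theorem. You assert that in genuine multiplicative reduction the subscript $n$ of type $I_n$ is ``invisible to the depths'' and propose rephrasing the conclusion accordingly. This is false: in the nested configuration the cluster picture has \emph{two} proper clusters, $\cR$ and the size-two cluster $\ss_1$, and on the integral minimal model produced in the proof of Theorem~\ref{ecmult} (where $d_{\cR}\in\{0,1\}$) one has $v(\Delta)=2\bigl(d_{\ss_1}+2d_{\cR}\bigr)$, so the type is $I_{2d_{\ss_1}}$ when $d_{\cR}=0$ and $I^*_{2(d_{\ss_1}-1)}$ when $d_{\cR}=1$; the subscript is therefore completely determined by the depth of $\ss_1$, which you appear to have forgotten carries its own depth. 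The only genuinely invisible datum is split versus nonsplit multiplicative reduction, which is Frobenius information and is not part of the Kodaira symbol, so no weakening of the statement is needed -- this is precisely the content of Remark~\ref{kodmult}, which says the Kodaira type \emph{is} recoverable via the valuation of the discriminant.

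Two smaller slips in the representation-theoretic check, which you would presumably catch in the tabulation you propose: in the potentially good case the children of $\cR$ are the three singletons, so $|\cR^{odd}|=3$ always (it is not ``empty or a singleton depending on parity of $d_{\cR}$''), and with $f$ monic $\mu_{\cR}=0$, giving $\lambda_{\cR}=\tfrac{3}{2}d_{\cR}$ and hence $t=\denom\tfrac{3d_{\cR}}{2}$; it is this $t$ that must be matched against the column of Theorem~\ref{ecgood}. In the multiplicative case your identification of $X$ self-corrects mid-sentence to $X=\{\cR,\ss_1\}$, which is right. With the Kodaira argument repaired as above, the rest of your plan coincides with the paper's proof.
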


\begin{proof}
	This follows by a simple case-by-case check of the formula against the results of Theorems \ref{ecgood} and \ref{ecmult}, as well as Remark \ref{kodmult} for the Kodaira types.
\end{proof}

\section{An extended example}
\label{egsection}

Throughout this section, we will apply our various results to the following cluster picture. 

\begin{center}
\clusterpicture
\Root(0.00,2)A(a1);     
\Root(0.40,2)A(a2);     
\Root(0.80,2)A(a3);     
\Root(1.20,2)A(a4);
\Root(2.00,2)A(b1);
\Root(2.40,2)A(b2);
\Root(2.80,2)A(b3);
\Root(3.20,2)A(b4);
\Root(4.00,2)A(c1);
\Root(4.40,2)A(c2);
\Root(4.80,2)A(c3);
\Root(5.20,2)A(c4);
\Root(6.00,2)A(d1);
\Root(6.40,2)A(d2);
\Root(6.80,2)A(d3);
\Root(7.20,2)A(d4);
\ClusterL(s1){(a1)(a2)(a3)(a4)}{$\frac{4}{9}$};
\ClusterL(s2){(b1)(b2)(b3)(b4)}{$\frac{4}{9}$};
\ClusterL(s3){(c1)(c2)(c3)(c4)}{$\frac{4}{9}$};
\ClusterL(s4){(d1)(d2)(d3)(d4)}{$\frac{1}{2}$};
\ClusterL(s5){(s1)(s2)(s3)(s4)}{$\frac{1}{3}$};
\endclusterpicture
\end{center}

We order the roots from left to right as $r_1,\cdots, r_{16}$. We label the proper clusters as 
\begin{eqnarray*}
	\cR, \qquad \ss_1=\{r_1,r_2,r_3,r_4\}, \qquad \ss_2&=&\{r_5,r_6,r_7,r_8 \}, \\
	\ss_3=\{r_9,r_{10},r_{11},r_{12} \}, \qquad \ss_4&=&\{r_{13},r_{14},r_{15},r_{16}\},
\end{eqnarray*}
where the number in the bottom left corner of the cluster denotes its depth.

We split the example into three subsections to demonstrate our three main results that we can apply:
\begin{itemize}
	\item Determine the inertia action and identify orphans; 
	\item Construct a polynomial that gives rise to the cluster picture;
	\item Compute the inertia representation of the corresponding hyperelliptic curve.
\end{itemize} 

\subsection{Inertia action and orphans}
We begin the analysis of our example by determining the inertia action on all clusters with their stabilisers and the identification of orphans to demonstrate the application of Theorem \ref{main}. A priori, this does assume that we know the cluster picture is already of polynomial type, but the observant reader will notice that this is equivalent to checking that the cluster picture satisfies Hypothesis H. Hence the assumption is moot since if we arrive at a contradiction then it would not be of polynomial type for $p \geqslant 17$.

Firstly, we observe that $|I|=\lcm(9,2,3)=18$. We now proceed with determining the inertia action. Note that $\denom d_{\cR}=3$ so the non-orphans of $\cR$ have orbit length $3$, however $\cR$ has four children so it must have an orphan. Since valuations are Galois-invariant, the orphan is necessarily $\ss_4$. Without loss of generality, we assume that a generator of inertia $i$ acts as the cycle $(\ss_1,\ss_2,\ss_3)$.

Now consider $\ss_1$. This has index $[I:I_{\ss_1}]=3$ so the orbit length of its non-orphans is $\denom \frac{3 \times 4}{9}=3$, hence there is once again a orphan. Since all children are isomorphic\footnote{We say two clusters are isomorphic if there is a depth-preserving bijection between them and all subclusters.}, it does not matter which child we choose to be the orphan and we therefore select $r_4$. Without loss of generality, we hence suppose that $i^3$ acts as $(r_1,r_2,r_3)$. As $\ss_2$ and $\ss_3$ are inertia-conjugate to $\ss_1$ they also have an orphan, which we again assume to be the rightmost root, with $i^3$ otherwise acting left to right as before.

Finally we study the action on $\ss_4$. Since this was an orphan, we have $[I:I_{\ss_4}]=1$ and hence the orbit length on its children is $\denom \frac{1}{2}=2$. As it has four children, there are no orphans and therefore we may assume that $i$ acts as $(r_{13},r_{14})(r_{15},r_{16})$.

The complete inertia action on roots is therefore given as $$I \cong \langle (r_1,r_5,r_9,r_2,r_6,r_{10},r_3,r_7,r_{11})(r_4,r_8,r_{12})(r_{13},r_{14})(r_{15},r_{16}) \rangle.$$

To encapsulate the extra information of the orphans, we colour them green to produce what we shall henceforth refer to as an \emph{orphan cluster picture}. Our example as an orphan cluster picture now appears below.

\begin{center}
\clusterpicture
\Root(0.00,2)A(a1);     
\Root(0.40,2)A(a2);     
\Root(0.80,2)A(a3);      
\Root(1.20,2)G(a4);
\Root(2.00,2)A(b1);
\Root(2.40,2)A(b2);
\Root(2.80,2)A(b3);
\Root(3.20,2)G(b4);
\Root(4.00,2)A(c1);
\Root(4.40,2)A(c2);
\Root(4.80,2)A(c3);
\Root(5.20,2)G(c4);
\Root(6.00,2)A(d1);
\Root(6.40,2)A(d2);
\Root(6.80,2)A(d3);
\Root(7.20,2)A(d4);
\ClusterL(s1){(a1)(a2)(a3)(a4)}{$\frac{4}{9}$};
\ClusterL(s2){(b1)(b2)(b3)(b4)}{$\frac{4}{9}$};
\ClusterL(s3){(c1)(c2)(c3)(c4)}{$\frac{4}{9}$};
\ClusterG(s4){(d1)(d2)(d3)(d4)}{$\frac{1}{2}$};
\ClusterL(s5){(s1)(s2)(s3)(s4)}{$\frac{1}{3}$};
\endclusterpicture
\end{center}

With this extra information, we shall verify Theorem \ref{main}iv for three of the roots.
\begin{eqnarray*}
	[I:I_{r_1}] &=& \lcm(\denom \ss_1,\denom \cR ) = 9; \\ \relax
	[I:I_{r_4}] &=& \lcm(1,\denom \cR ) = 3; \\ \relax
	[I:I_{r_{13}}] &=& \lcm(\denom \ss_4,1 ) = 2.
\end{eqnarray*}

This check is sufficient to show that the cluster picture does indeed satisfy Hypothesis H and is therefore of polynomial type.

\subsection{Construction of the polynomial}
Now that we know our cluster picture is of polynomial type, we shall associate a polynomial to it via Construction \ref{polycons}. We begin by choosing a compatible set $Y$ of representatives for roots modulo the inertia action; this surmounts to choosing any pair of non-conjugate roots of the orphan cluster $\ss_4$ and otherwise selecting one of $\ss_1,\ss_2,\ss_3$ and using both the orphan root and any non-orphan root of that cluster. Recalling our left-to-right ordering of the roots, we choose $$Y=\{r_1,r_4,r_{13},r_{15}\}.$$

This leads us to finding a compatible set of coefficients $a(y,\ss)$ for the following:
\begin{eqnarray*}
	\alpha(r_1) &=& a(r_1,\ss_1)\pi_K^{4/9} + a(r_1,\cR)\pi_K^{1/3}, \\
	\alpha(r_4) &=& a(r_4,\ss_1)\pi_K^{4/9} + a(r_4,\cR)\pi_K^{1/3}, \\
	\alpha(r_{13}) &=& a(r_{13},\ss_4)\pi_K^{1/2} + a(r_{13},\cR)\pi_K^{1/3}, \\
	\alpha(r_{15}) &=& a(r_{15},\ss_4)\pi_K^{1/2} + a(r_{15},\cR)\pi_K^{1/3}. \\
\end{eqnarray*}

First we note that Construction \ref{polycons}(ii)(a) implies $a(r_1,\cR)=a(r_4,\cR)$ and $a(r_{13},\cR)=a(r_{15},\cR)$. Moreover, $a(r_4,\ss_1)=a(r_{13},\cR)=0$ by Construction \ref{polycons}(iii). All that remains is to make the remaining terms sufficiently different; indeed we find that the following choices work with the product $f_Y$ of their minimal polynomials giving an isomorphic cluster picture. 

\begin{eqnarray*}
	\alpha(r_1) &=& \pi_K^{4/9} + \pi_K^{1/3}, \\
	\alpha(r_4) &=& \pi_K^{1/3}, \\
	\alpha(r_{13}) &=& \pi_K^{1/2}, \\
	\alpha(r_{15}) &=& 2\pi_K^{1/2}. \\
\end{eqnarray*}

The original bound on the primes we gave stipulated this works for $p>16$, however we can see that these choices are still valid for $p>3$ (cf. Remark \ref{primecon}). If $K=\qq_{19}, \pi_K=19$, then this yields $f_Y$ as the polynomial $$(x^2-19)(x^2-76)(x^3-19)(x^9-57x^6-6498x^4+1083x^3-61731x^2-61731x-137180).$$

\subsection{Inertia representation of the curve}

Recall the orphan cluster picture we used earlier, given below. We shall compute the corresponding inertia representation on $H^1(C)=H^1_{\acute{e}t}(C/\overline{K},\qq_{\ell})$ of the associated hyperelliptic curve $C$.

\begin{center}
\clusterpicture
\Root(0.00,2)A(a1);     
\Root(0.40,2)A(a2);     
\Root(0.80,2)A(a3);      
\Root(1.20,2)G(a4);
\Root(2.00,2)A(b1);
\Root(2.40,2)A(b2);
\Root(2.80,2)A(b3);
\Root(3.20,2)G(b4);
\Root(4.00,2)A(c1);
\Root(4.40,2)A(c2);
\Root(4.80,2)A(c3);
\Root(5.20,2)G(c4);
\Root(6.00,2)A(d1);
\Root(6.40,2)A(d2);
\Root(6.80,2)A(d3);
\Root(7.20,2)A(d4);
\ClusterL(s1){(a1)(a2)(a3)(a4)}{$\frac{4}{9}$};
\ClusterL(s2){(b1)(b2)(b3)(b4)}{$\frac{4}{9}$};
\ClusterL(s3){(c1)(c2)(c3)(c4)}{$\frac{4}{9}$};
\ClusterG(s4){(d1)(d2)(d3)(d4)}{$\frac{1}{2}$};
\ClusterL(s5){(s1)(s2)(s3)(s4)}{$\frac{1}{3}$};
\endclusterpicture
\end{center}

Observe first that $X/I=\{ \ss_1, \ss_4 \}$ since $\cR$ is \"{u}bereven. We first tabulate a few of the invariants associated to each of these clusters, which one can extract through the orphan cluster picture as we have previously done.

\begin{center}
\begin{tabular}{c|ccccccc}
$\ss$ & $n_{\ss}$ & $n'_{\ss}$ & $|\ss^{odd}|$ & $\left\lfloor \dfrac{|\ss^{odd}|}{n'_{\ss}} \right\rfloor$ & Orphan? & $\ord \gamma_{\ss}$ & $\ord \es$ \\ \hline
$\ss_1$ & $3$ & $3$ & $4$ & $1$ & Yes & $3$ & $1$ \\
$\ss_4$ & $1$ & $2$ & $4$ & $2$ & No & $1$ & $1$
\end{tabular}
\end{center}

We begin with $\ss_1$. Now since $\varepsilon_{\ss_1}=\mathds{1}$, we have $\Ind_{I_{\ss_1}}^I \varepsilon_{\ss_1} =\bigoplus\limits_{m|3} \rho_m = \rho_1 \oplus \rho_3$. Moreover, since $\ss_1$ has an odd orphan, there is no contribution from the middle line of the formula in Theorem \ref{repthm}. Lastly, since $n'_{\ss}=3$, we need to consider the sets $S_{1,3}=\{3\}$ and $S_{3,3}=\{1,3\}$; for transparency of the computation we display the information in a table. Since the divisor $n_1$ only affects the order and not the multiplicity of the representation, we have incorporated all possible choices into the table as well.

\begin{center}
\begin{tabular}{c|ccccc}
	$(d,t,s)$ & $\gcd(n_{\ss},s^{\infty})$ & $\alpha_{d,t,s}$ & $\beta(n_{\ss},s)$ & $n_1$ & $s\gcd(n_{\ss},s^{\infty})$ \\ \hline
	$(1,3,3)$ & $3$ & $\frac{1}{2}$ & $1$ & $1$ & $9$ \\
	$(3,3,1)$ & $1$ & $1$ & $1$ & $1,3$ & $1,3$ \\
	$(3,3,3)$ & $3$ & $\frac{1}{2}$ & $1$ & $1$ & $9$
\end{tabular}
\end{center}

Hence $$\Ind V_{\ss_1} = \frac{1}{2}\rho_9 \oplus \rho_1 \oplus \rho_3 \oplus \frac{1}{2}\rho_9 \ominus (\rho_1 \oplus \rho_3)=\rho_9.$$

We now proceed to do a similar computation for $\ss_4$. In this case $\Ind_{I_{\ss_4}}^I \varepsilon_{\ss_4}=\rho_1$ and we have a contribution from the middle line as there is no orphan; however it is straightforward to compute that this is simply $\ominus \rho_1$ since $n_{\ss_4}=\ord \gamma_{\ss_4}=1$. We now reproduce the table above for this cluster where the sets of interest are $S_{1,1}=\{1\}$ and $S_{2,1}=\{2\}$.

\begin{center}
\begin{tabular}{c|ccccc}
	$(d,t,s)$ & $\gcd(n_{\ss},s^{\infty})$ & $\alpha_{d,t,s}$ & $\beta(n_{\ss},s)$ & $n_1$ & $s\gcd(n_{\ss},s^{\infty})$ \\ \hline
	$(1,1,1)$ & $1$ & $1$ & $1$ & $1$ & $1$ \\
	$(2,1,2)$ & $1$ & $1$ & $1$ & $1$ & $2$
\end{tabular}
\end{center}

Recalling that $\left\lfloor \dfrac{|\ss_4^{odd}|}{n'_{\ss_4}} \right\rfloor =2$, we hence have $$\Ind V_{\ss_4} = 2(\rho_1 \oplus \rho_2) \ominus \rho_1 \ominus \rho_1 = 2\rho_2.$$

Lastly, using our previous computations for $U_{\ss}$ along with the fact that $\varepsilon_{\cR}=\mathds{1}$ we finally have that $$H^1(C) = \underbrace{2\rho_2 \oplus \rho_9}_{H^1_{ab}} \oplus ((\underbrace{\rho_1 \oplus \rho_3}_{H^1_t}) \otimes \sp (2)).$$

\appendix
\section{Proof of Theorem \ref{main}}

In this section, we shall prove Theorem \ref{main} through a series of lemmas, from which the theorem will result.\footnote{When life hands you lemmas, make lemma-nade.} We briefly restate the theorem below and set-up a little bit more notation that we shall use throughout this section for the proof.

\begin{theorem}
\label{mainpf}
	Assume the inertia group $I$ is tame.
	\begin{enumerate}
		\item[]
    	\item $\lcm_{\ss} \denom \ds = |I|$, where the $\lcm$ runs over all proper clusters.
    	\item Fix a cluster $\ss$. Then the orbits of non-orphans of $\ss$ under $\Is$ all have equal length.
    	\item Let $\ss'$ be a child of $\ss$ which is not an orphan. Then the length of its orbit under $\Is$ is $\denom(\ds [I:\Is])$.
    	\item Let $\ss$ be a cluster. Then $$[I:\Is] = \lcm_{\ss \subsetneq \ss'} \denom d_{\ss'}^*,$$ where for a cluster $\ss' \supsetneq \ss$, $$d_{\ss'}^*=\begin{cases}
		1 &\text{ if the child of $\ss'$ containing $\ss$ is an orphan}, \\
		d_{\ss'} &\text{ else}.
	\end{cases}$$
	\end{enumerate}
\end{theorem}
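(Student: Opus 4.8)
The plan is to work with the tame inertia group $I$, which is cyclic (since the splitting field $K(\cR)/K$ is tamely ramified), and to exploit the basic fact that for any $r, r' \in \overline{K}$, the valuation $v(r - r')$ is fixed by the decomposition group and in particular $v(\sigma r - \sigma r') = v(r - r')$ for $\sigma \in I$; hence depths of clusters are $I$-invariant, and $I$-conjugate clusters have equal depth. The key quantitative input is the following: if $L/K$ is the minimal extension over which $r$ becomes rational (so $[L:K] = [I : I_r]$ in the tame setting, $I_r$ the stabiliser of the root $r$), then $r$ lies in the totally ramified extension of degree $e = [L:K]$, and the "denominators of the valuations appearing in $r$'s coordinates" are governed by $e$. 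More precisely I would set up, for a root $r$ in a chain of clusters $r \in \ss_k \subsetneq \ss_{k-1} \subsetneq \cdots \subsetneq \ss_0 = \cR$, the nested structure $v(\sigma r - r)$ as $\sigma$ ranges over $I$: this valuation equals $d_{\ss_j}$ precisely when $\sigma$ fixes $\ss_{j+1}$ but moves $\ss_j$... refined by the orphan phenomenon. This gives a dictionary between the cycle structure of $I$ on roots and the depths, which is the engine for all four parts.

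I would prove the parts in a convenient logical order rather than the stated one. First, part (iii): fix $\ss$ with stabiliser $\Is$ and a non-orphan child $\ss'$. The orbit of $\ss'$ under $\Is$ has length $[\Is : \Stab_{\Is}(\ss')]$. Pick $r' \in \ss'$ and $r'' \in \ss \setminus \ss'$ with $v(r' - r'') = \ds$ (possible since $\ds$ is the min over the cluster and is achieved across some pair of distinct children, and since $\ss'$ is not the orphan one can arrange $r'$ in $\ss'$). For $\sigma \in \Is$, $\sigma$ fixes $\ss'$ iff $v(\sigma r' - r'')$... the point is that $\sigma \in \Is$ sends $r'$ to an element at distance $> \ds$ from $r'$ iff $\sigma \ss' = \ss'$; otherwise the distance drops to exactly $\ds$. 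Translating into the cyclic generator $i$ of $\Is$ acting like multiplication by a root of unity on the relevant uniformiser-power $\pi^{\ds}$-term, $\sigma = i^k$ fixes $\ss'$ iff $\denom(\ds \cdot [I:\Is]) \mid k$; hence the orbit length is exactly $\denom(\ds[I:\Is])$. Part (ii) is then immediate since this value depends only on $\ss$, not on the chosen non-orphan child (and the orphan, if present, has orbit length $1$ by definition, so is genuinely excluded). Part (i) follows by taking the union over all clusters of the contributions: $|I| = [I : I_r]$ for a root $r$ in a "generic" position, and $[I:I_r] = \lcm$ of the orbit-length data along its cluster chain, which telescopes to $\lcm_\ss \denom \ds$; one must check that every prime power dividing some $\denom \ds$ actually divides $|I|$ (clear, as $\Is \le I$ and the orbit length divides $|I|$) and conversely that nothing is missed (choose a chain realising the full ramification). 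Part (iv) is the relative version of (i): replacing $I$ by the "quotient" seen above $\ss$, $[I : \Is]$ is the lcm of the orbit lengths of the $\ss$-containing children of the clusters $\ss' \supsetneq \ss$, and an orphan child contributes a trivial factor (orbit length $1$), which is exactly what the definition of $d_{\ss'}^*$ (set to $1$ in the orphan case, $d_{\ss'}$ otherwise, after absorbing the index correction) records — here one uses (iii) applied one level up, plus the transitivity of indices $[I:\Is] = [I:I_{\ss'}][I_{\ss'}:\Is]$.

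The main obstacle I expect is the careful bookkeeping around \emph{orphans}: the subtlety is that when $\Is$ fixes a unique child $\ss'$, the generator of $\Is$ acts on the $\pi^{d_{\ss'}}$-scale term trivially even though $d_{\ss'}$ may have a nontrivial denominator, because the would-be rotation is "used up" elsewhere in the tower — concretely, the orphan's depth does not force any new ramification relative to $\Is$, so it must be excised from the lcm (hence $d_{\ss'}^* = 1$) and from the "equal orbit length" statement in (ii). Making this precise requires a clean description of how a cyclic tamely-ramified group acts on a root written as $\sum_\ss a(r,\ss)\pi^{\ds}$ (essentially the normal form from Construction \ref{polycons} / Lemma \ref{polylem}), and verifying that the orphan is exactly the obstruction to the naive lcm formula. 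I would isolate this in a preliminary lemma: \emph{for $\sigma \in \Is$ of order $m$, $\sigma$ fixes the child $\ss'$ of $\ss$ iff $m \mid \denom(d_{\ss'}[I:\Is])$ unless $\ss'$ is the orphan, in which case $\sigma$ always fixes $\ss'$}; everything else is then extracting consequences. A secondary, more routine obstacle is justifying the reduction that $v(\sigma r - r)$ takes the stated nested-depth values — this is where tameness (so that $K(\cR)/K$ is a tower of the expected form and $\pi^{1/e}$-type generators suffice) and the residue-characteristic hypothesis are genuinely used.
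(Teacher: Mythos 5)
The genuine gap is in your part (i). You deduce it by telescoping (iv) along the cluster chain of a single root ``in generic position'' with $[I:I_r]=|I|$, equivalently by ``choosing a chain realising the full ramification''. No such root need exist: for the paper's own example $f=(x^2-p)(x^3-p^2)\in\qp[x]$ (the remark following Lemma \ref{mainiv}) the inertia group is cyclic of order $6$, but the two roots of $x^2-p$ form an orbit of length $2$ and the three roots of $x^3-p^2$ an orbit of length $3$, so no single root, and hence no single chain, sees the full ramification, even though $\lcm_{\ss}\denom \ds=\lcm(2,3)=6=|I|$ as (i) asserts. The paper avoids this by proving (i) directly, independently of (ii)--(iv) and prime by prime: Lemma \ref{thm1} shows a root of orbit length $n$ satisfies $n=\lcm_{k}\denom v(r-i^k(r))$; then for each prime power $q^a$ exactly dividing $|I|$ one picks a root whose orbit length is divisible by $q^a$ and gets a cluster (the smallest one containing $r$ and $i^{k_r}(r)$) whose depth has denominator divisible by $q^a$, while the reverse divisibility is immediate since depths are valuations of elements of $K(\cR)$. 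Your route can be repaired without Lemma \ref{thm1}: replace the single generic root by $\lcm_r[I:I_r]$, which equals $|I|$ because the cyclic group $I$ acts faithfully on $\cR$, and note that every proper cluster has at least one non-orphan child (the orphan, if any, is unique among at least two children), so by (iii)/(iv) its depth denominator divides $[I:I_r]$ for a root $r$ of that child. But as written the step is false.

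For (ii)--(iv) your mechanism is essentially the paper's (the Claim inside the proof of (ii), Lemma \ref{staborder}, Corollary \ref{lcmcor}, and the induction of Lemma \ref{mainiv}): a generator of $\Is$ multiplies the coefficient at the $\varpi^{N}$ scale (where $\ds=N/e$ and $\varpi$ is a uniformiser of $K(\cR)$) by a root of unity of order $\denom(\ds[I:\Is])$, the orphan is exactly the child whose coefficient there vanishes, and (iv) follows by walking up the chain of ancestors using (iii) together with $[I:\Is]=[I:I_{\ss'}][I_{\ss'}:\Is]$. Two cautions, though. First, an arbitrary root does not come with the normal form $\sum_{\ss}a(r,\ss)\pi_K^{\ds}$ you invoke; that expansion is what Construction \ref{polycons}/Lemma \ref{polylem} manufacture when building a polynomial from an abstract cluster picture. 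The rigorous substitute is the plain $\varpi$-adic expansion of the root together with the paper's Claim that membership of $\ss$ (resp.\ of a fixed child) is equivalent to congruence modulo $\varpi^{N}$ (resp.\ $\varpi^{N+1}$). Second, your closing ``preliminary lemma'' is misstated: the orbit of a child of $\ss$ under $\Is$ is governed by the parent depth $\ds$, not by $d_{\ss'}$, and the correct criterion is the one in your (iii) paragraph ($i^k$ fixes a non-orphan child iff $\denom(\ds[I:\Is])\mid k$, with $i$ a generator of $\Is$), not ``$m\mid\denom(d_{\ss'}[I:\Is])$''; similarly, in (iv) the index $[I:\Is]$ is the \emph{product} of the successive orbit lengths along the chain, and it is only after the $\lcm$/$\gcd$ bookkeeping of Corollary \ref{lcmcor} that it becomes $\lcm_{\ss\subsetneq\ss'}\denom d_{\ss'}^*$.
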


\noindent \textbf{Notation.} \\
\begin{tabular}{cl}
$v_q$ & $q$-adic valuation for a prime $q$; \\
$i$ & generator of the (cyclic) inertia group $I$; \\
$e$ & $=|I|$; \\
$\varpi$ & a uniformiser of $K(\cR)$.
\end{tabular}

\begin{lemma}
\label{thm1}
	Let $r$ be a root and suppose it has an orbit of length $n$ under $I$. Then $$n =\lcm_{1 \leqslant k \leqslant n-1} \denom v(r-i^k(r)).$$ 
\end{lemma}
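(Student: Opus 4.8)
The plan is to analyze the action of the cyclic group $I = \langle i \rangle$ on the orbit of a fixed root $r$, and relate the orbit length $n$ to the valuations $v(r - i^k(r))$ via ramification theory. Since $I$ is tame, we have $I \cong \Gal(K(\cR)/K^{\mathrm{unr}})$ cyclic of order $e$, and the key point is that the orbit of $r$ corresponds to a subextension: setting $H = \Stab_I(r)$, the orbit has length $n = [I:H]$, and $K^{\mathrm{unr}}(r)/K^{\mathrm{unr}}$ is totally (tamely) ramified of degree $n$. The uniformiser $\varpi$ of $K(\cR)$ can be chosen so that $i(\varpi) = \zeta_e \varpi$ for a primitive $e$th root of unity $\zeta_e$, and then for any $x \in K(\cR)$ one computes $v(x - i^k(x))$ in terms of the $\varpi$-adic expansion of $x$.

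First I would reduce to understanding $v(r - i^k(r))$ concretely. Write $r = \sum_{j} c_j \varpi^j$ (a Hahn/Laurent-type expansion with $c_j$ in the residue field, or more carefully work with the fact that $r$ generates the degree-$n$ totally ramified subextension). Then $i^k(r) - r = \sum_j c_j(\zeta_e^{jk} - 1)\varpi^j$, so $v(r - i^k(r)) = \min\{ j/e : c_j \neq 0,\ \zeta_e^{jk} \neq 1\} = \min\{j/e : c_j \neq 0,\ e \nmid jk\}$. The crucial structural input is that $r$ generates $K^{\mathrm{unr}}(r)$ of degree $n$ over $K^{\mathrm{unr}}$: this forces the set $\{ j : c_j \neq 0\}$ to generate, together with $e$, a subgroup of $\zz$ whose behaviour pins down $n$. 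Concretely, $\Stab_I(r) = \{i^k : i^k(r) = r\} = \{i^k : e \mid jk \text{ for all } j \text{ with } c_j \neq 0\}$, which is the cyclic group of order $e/n$ generated by $i^{n}$; equivalently $n = e / \gcd(e, \{j : c_j \neq 0\})$, i.e. $n$ is the order of the subgroup of $\zz/e\zz$ generated by the exponents appearing in $r$.

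Next I would compute the right-hand side. For each $k$ with $1 \leqslant k \leqslant n-1$, $\denom v(r - i^k(r)) = \denom(j_k/e) = e/\gcd(e, j_k)$ where $j_k = \min\{j : c_j \neq 0,\ e \nmid jk\}$. Taking the lcm over $k$ amounts to taking, for each prime power $q^a \| n$, a witness $k$ and index $j$ such that $q^a \mid e/\gcd(e,j)$; the structural description of $n$ above guarantees that every prime power dividing $n$ is realised as $\denom v(r - i^k(r))$ for a suitable $k$, while conversely each $\denom v(r-i^k(r))$ divides $n$ since $i^k(r) - r$ lies in the degree-$n$ extension $K^{\mathrm{unr}}(r)$ whose ramification index is $n$ (so all valuations of its nonzero elements have denominator dividing $n$). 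Combining the two inclusions gives the equality.

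The main obstacle I anticipate is making the "$\varpi$-adic expansion of $r$" rigorous: a priori $r$ lies in $K(\cR)$ which may be a larger extension than $K^{\mathrm{unr}}(r)$, and one must be careful that the expansion coefficients and the set of exponents $\{j : c_j \neq 0\}$ genuinely control $[K^{\mathrm{unr}}(r):K^{\mathrm{unr}}] = n$ rather than something about the ambient field. The clean way around this is to work inside $K^{\mathrm{unr}}(r)$ directly: it is totally tamely ramified of degree $n$, so it equals $K^{\mathrm{unr}}(\pi^{1/n})$ for a uniformiser $\pi$ of $K^{\mathrm{unr}}$, its Galois closure over $K^{\mathrm{unr}}$ sits inside $K(\cR)$, and the conjugates $i^k(r)$ are governed by multiplication of the $\pi^{1/n}$-part by $n$th roots of unity. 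I would then run the valuation computation there, where the denominators are manifestly divisors of $n$ and the argument that every prime-power divisor of $n$ appears is a direct counting argument on which $k \in \{1, \dots, n-1\}$ kill which powers of $\zeta_e$. I expect the rest to be routine bookkeeping with the $q$-adic valuations.
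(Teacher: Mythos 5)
Your proposal is correct and follows essentially the same route as the paper: expand $r$ in powers of a uniformiser on which a generator of tame inertia acts by a root of unity, read off $v(r-i^k(r))$ as the smallest surviving exponent, bound every denominator by $n$ via the degree-$n$ totally ramified extension generated by $r$, and exhibit prime-by-prime witnesses $k$ realising each prime power dividing $n$. The only (cosmetic) difference is that you expand with respect to a uniformiser of $K(\cR)$ (of valuation $1/e$) and then pass to $K^{\mathrm{unr}}(r)$, whereas the paper works from the start in the splitting field of the minimal polynomial of $r$, where the uniformiser has valuation $1/n$ and the witness $k$ in the prime-power step is immediate; your sketched reduction is sound since the structural identity $n=e/\gcd(e,\{j: c_j\neq 0\})$ forces every exponent to be divisible by $e/n$, which makes the witness $k\equiv n/q$ work.
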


\begin{proof}
	As the orbit length of $r$ is $n$, the splitting field $F \subset K(\cR)$ of the minimal polynomial of $r$ has ramification degree $n$. Write $r=\sum\limits_{j \geqslant 0} a_j\pi_F^j$ where the nonzero $a_j$ have valuation $0$, $\pi_F$ is a uniformiser of $F$ and note that $v(\pi_F)=\frac{1}{n}$. Then $r-i^k(r)=\sum\limits_{j \geqslant 0} a_j(1-\zeta^{kj})\pi_F^j$ for some primitive $n^{th}$ root of unity $\zeta$.
	
	Suppose that $n=q^m$ is a prime power. Then any nonzero term of $r-i^{q^{m-1}}(r)$ has valuation $\frac{a}{q^m}$ for some positive integer $a$ coprime to $q$ so the statement holds. Considering the prime factorisation yields the general case.
	
	
\end{proof}

\begin{remark}
Note that one cannot drop the $\lcm$ from the preceding lemma. For example, $r=p^{1/2}+p^{2/3}$ has orbit $6$ under the absolute inertia group of $\qp$ but $v(r-i(r))$ never has denominator $6$ for any inertia element $i$. This is also clear from the cluster picture of the minimal polynomial of $r$ given below.

\begin{center}
\clusterpicture
\Root(0.00,2)A(a1);
\Root(0.40,2)A(a2);
\Root(0.80,2)A(a3);
\Root(1.80,2)A(a4);
\Root(2.20,2)A(a5);
\Root(2.60,2)A(a6)
\ClusterL(s1){(a1)(a2)(a3)}{$\frac{2}{3}$};
\ClusterL(s2){(a4)(a5)(a6)}{$\frac{2}{3}$};
\ClusterL(s5){(s1)(s2)}{$\frac{1}{2}$};
\endclusterpicture
\end{center}
\end{remark}

\begin{lemma}[=Theorem \ref{mainpf}i]
	$\lcm_{\ss} \denom \ds = e$, where the $\lcm$ runs over all proper clusters.
\end{lemma}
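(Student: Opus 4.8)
The plan is to establish both inequalities between $\lcm_{\ss}\denom\ds$ and $e = |I|$. For the divisibility $\lcm_{\ss}\denom\ds \mid e$: every proper cluster $\ss$ has depth $\ds = v(r-r')$ for suitable $r,r' \in \ss \subseteq \cR$, and since $r-r' \in K(\cR)$ which is totally tamely ramified of degree $e$ over its maximal unramified subextension, $v(r-r') \in \frac{1}{e}\zz$; hence $\denom\ds \mid e$ for each $\ss$, and the same holds for the $\lcm$. For the reverse direction I would invoke Lemma \ref{thm1}: pick a root $r$ of maximal orbit length $n$ under $I$. Since $I$ is cyclic of order $e$, the orbit of $r$ has length $e/|\!\Stab_I(r)|$; if I choose $r$ lying in no orphan chain — more precisely, a root whose stabiliser is trivial — then $n = e$. (Such a root exists: the intersection of all point stabilisers is trivial since $I$ acts faithfully on $\cR$, and for a cyclic group the intersection of the proper subgroups $\Stab_I(r)$ being all of $I$ is impossible unless some $\Stab_I(r)$ is already trivial, because a cyclic group is not the union of its proper subgroups when... — this needs care, see below.) Then Lemma \ref{thm1} gives $e = n = \lcm_{1\le k \le n-1}\denom v(r-i^k(r))$.

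The bridge from that $\lcm$ to $\lcm_{\ss}\denom\ds$ is the key geometric observation: for any two distinct roots $r, r'$, the quantity $v(r-r')$ equals $d_{r\wedge r'}$, the depth of the smallest cluster containing both (this is essentially the definition of depth together with the ultrametric inequality — $v(r-r'')\ge \min(v(r-r'),v(r'-r''))$ forces the minimum in the definition of $\ds$ to be achieved, and the value is constant across all pairs realising the smallest enclosing cluster). Applying this with $r' = i^k(r)$, each term $\denom v(r - i^k(r)) = \denom d_{r \wedge i^k(r)}$ divides $\lcm_{\ss}\denom\ds$. Combining: $e = \lcm_k \denom v(r-i^k(r)) \mid \lcm_{\ss}\denom\ds$, and together with the first paragraph this gives equality.

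The main obstacle I anticipate is the existence of a root $r$ with orbit length exactly $e$ (equivalently, trivial $I$-stabiliser). This is false in general — e.g.\ if every root lies in some orphan that is fixed by a nontrivial subgroup — so the clean argument above needs repair. The fix is to avoid requiring a single full-orbit root: instead, for \emph{each} prime power $q^{m} \| e$, produce a root $r_q$ whose orbit length is divisible by $q^m$ (take any root moved by $i^{e/q}$; such a root exists since $i^{e/q} \ne 1$ acts nontrivially on $\cR$), note its orbit length $n_q$ satisfies $v_q(n_q) = m$, apply Lemma \ref{thm1} to $r_q$ to get $q^m \mid n_q = \lcm_k \denom v(r_q - i^k(r_q)) = \lcm_k \denom d_{r_q \wedge i^k(r_q)}$, so $q^m \mid \lcm_{\ss}\denom\ds$. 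Ranging over all prime powers dividing $e$ yields $e \mid \lcm_{\ss}\denom\ds$, completing the proof. A secondary (routine) point to verify carefully is that $i^k(r) \wedge r$ is a cluster in the strict sense of Definition \ref{clusterdef} and that its depth genuinely equals $v(r-i^k(r))$; this follows from the ultrametric property and is exactly the kind of standard fact I would state without belabouring.
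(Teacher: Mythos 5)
Your proposal is correct and follows essentially the same route as the paper: the divisibility $\denom \ds \mid e$ comes from $\ds = v(r-r')$ with $r-r' \in K(\cR)$, and the reverse divisibility is obtained prime-by-prime by choosing, for each prime power $q^m \parallel e$, a root whose orbit length has $q$-adic valuation $m$ (your "root moved by $i^{e/q}$" makes the paper's existence claim explicit), then applying Lemma \ref{thm1} and identifying $v(r-i^k(r))$ with the depth of the smallest cluster containing $r$ and $i^k(r)$. Your self-correction abandoning the full-orbit root in favour of the prime-by-prime argument is exactly the right repair, and matches the paper's proof.
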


\begin{proof}
	Fix a prime $q$ and let $v_q(e)=a$. Since $e$ is the order of the inertia subgroup of $\Gal(K(\cR)/K)$, there exists $r \in \cR$ such that the order of its inertia orbit is divisible by $q^a$. Hence by Lemma \ref{thm1}, there exists an integer $k_r$ such that $v_q(\denom v(r-i^{k_r}(r)))=a$. Let $\ss$ be the smallest cluster containing $r$ and $i^{k_r}(r)$. Then $q^a| \denom \ds$. Iterating this for each prime divisor of $e$, we see that $e| \lcm_{\ss} \denom \ds$.
	
	Conversely, suppose $\denom \ds= \denom v(r-r')=m$ for some $r,r' \in \ss$. Since $r-r' \in K(\cR)$, we necessarily have $m|e$ and the result follows.
\end{proof}

\begin{remark}
	This gives a necessary condition for the extension $K(\cR)/K$ to be tamely ramified, namely that $p \nmid \denom \ds$ for all proper clusters $\ss$. This is not sufficient however, since the polynomial $x^3+3x+3 \in \zz_3[x]$ has a (wild) $S_3$ inertia action, but the depth of its only proper cluster is $\frac{1}{2}$.
\end{remark}

\begin{remark}
\label{Galval}
	Note that as valuations are Galois invariant, the depths of $\ss$ and $i(\ss)$ are equal for all clusters $\ss$ and $i \in I$.
\end{remark}

\begin{lemma}[=Theorem \ref{mainpf}ii]
	Let $\ss$ be a cluster. Then the orbits of the children of $\ss$ under $\Is$ all have equal length, except for possibly one fixed child.
\end{lemma}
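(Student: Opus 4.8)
The statement to prove is Theorem~\ref{mainpf}ii: for a fixed cluster $\ss$, all orbits of children of $\ss$ under $\Is$ have equal length, with at most one exception (which would be a fixed child, i.e.\ an orphan). The natural approach is to exploit the cyclic structure of $I$ and hence of $\Is$, together with the rigid structure of cluster pictures (children are either disjoint or nested, so $\Is$ genuinely \emph{permutes} the set of children of $\ss$). Write $\Is=\langle i^{[I:\Is]}\rangle$, a cyclic group, say of order $n$. The key point is that a cyclic group acting on a finite set partitions it into orbits whose sizes are all divisors of $n$, and I want to show that among the children of $\ss$ at most one orbit has size strictly smaller than the maximum.

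First I would reduce to understanding when two children can lie in orbits of different sizes. Suppose $\ss_1$ is a child in an orbit of size $m_1<n$, so its stabiliser in $\Is$ is the unique subgroup $H_1\leqslant\Is$ of index $m_1$, generated by $i^{[I:\Is]m_1}$. The crucial observation is that this generator of $H_1$ fixes $\ss_1$ \emph{setwise}, hence permutes the grandchildren (children of $\ss_1$); and $\ss_1$ carries a depth $d_{\ss_1}>\ds$. I would then invoke Lemma~\ref{thm1} / the depth-denominator machinery: if $\ss_1$ has a grandchild moved nontrivially, the relevant valuation differences force $\denom d_{\ss_1}$ to divide $m_1$ in a way that pins things down; if $\ss_1$ is \"ubereven with all grandchildren fixed, it behaves like a single fixed point. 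The heart of the matter is a \emph{pigeonhole} argument: two children $\ss_1,\ss_2$ with $\ss_1$ fixed (or in a short orbit) each ``use up'' the unique largest proper subgroup of the cyclic $\Is$, and there is only room for one such because a cyclic group has a unique subgroup of each index.

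More concretely, I expect the argument to run as follows. Let $n=|\Is|$ and let $m$ be the maximal orbit length among children of $\ss$; pick a child $\ss^*$ realising it. For any other child $\ss'$ with orbit length $m'<m$, its stabiliser in $\Is$ is the subgroup of index $m'$; in particular $\ss'$ together with all its conjugates is fixed by $i^{[I:\Is]m'}$. I then claim there can be at most one $\Is$-orbit of children of length $<m$: if $\ss',\ss''$ were two such, consider the largest prime power $q^a\,\|\,n$ not dividing $\gcd$ of the two orbit lengths — using that $\Is$ cyclic has a unique subgroup of index $q^b$ for each $b$, one shows the fixed-point sets are forced to coincide, contradicting that $\ss'$ and $\ss''$ lie in distinct orbits unless one of the orbits is actually trivial (a fixed child). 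The "one fixed child" exception then appears exactly as the orphan in Definition~\ref{clusterdef}.

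\textbf{Main obstacle.} The delicate part is not the group theory but linking orbit lengths of children to depths: a child $\ss'$ might be fixed by a large subgroup of $\Is$ for "trivial" reasons (it is \"ubereven and the action is entirely internal to it), versus being genuinely in a short orbit. I would handle this by passing to representative roots: choose $r\in\ss'$ and track $v(r-i^k(r))$ as $k$ ranges, using Lemma~\ref{thm1} to convert orbit-length statements into statements about $\denom$ of valuation differences, and then using property (iii) of cluster pictures ($d_{\mathfrak t}>d_{\ss}$ for $\mathfrak t\subsetneq\ss$) to separate "the root moved within $\ss'$" from "$\ss'$ moved as a block". The technical care needed to make the prime-by-prime reduction interact correctly with the nesting of depths is, I expect, where most of the work lies; the final uniqueness of the exceptional child is then a clean consequence of cyclicity.
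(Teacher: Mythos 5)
There is a genuine gap here: the mechanism you place at the heart of the argument --- cyclicity of $\Is$, uniqueness of the subgroup of each index, and a pigeonhole excluding two ``short'' orbits --- cannot prove the statement. A cyclic group acting on an abstract finite set can perfectly well have orbits of several different lengths (e.g.\ orbits of sizes $1,2,4$ under $C_4$); stabilisers are not ``used up'' by one orbit, and nothing forces the fixed-point sets of two children in short orbits ``to coincide''. The conclusion is an arithmetic fact about valuations, not a combinatorial one, and your sketch never isolates the input that makes it true. The paper's proof expands a representative root $\alpha=\sum_{j\geqslant 0}a_j\varpi^j$ of a child in powers of a uniformiser $\varpi$ of $K(\cR)$, with $\ds=N/e$: lying in $\ss$ (resp.\ in a given child of $\ss$) is equivalent to congruence modulo $\varpi^{N}$ (resp.\ $\varpi^{N+1}$), so a generator $\sigma$ of $\Is$ moves the child exactly when $a_N\neq 0$, and in that case the orbit length equals the order of $\sigma$ acting on $\varpi^{N}$ (tameness giving $\sigma(\varpi)=\zeta_e^{[I:\Is]}\varpi$ up to higher order), which is visibly independent of which child one takes. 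That uniformity is what makes all non-exceptional orbits equal. Moreover, roots in two distinct children of $\ss$ are at distance exactly $\ds$, so their $\varpi^{N}$-coefficients differ; hence at most one child can have $a_N=0$, and that child is fixed. This gives both the uniqueness and the ``fixed'' nature of the exception, neither of which your pigeonhole argument delivers (your sketch would at best bound the number of short orbits, not show the exceptional orbit is a single fixed child).

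Two secondary issues. Your appeal to grandchildren of a child $\ss_1$, its depth $d_{\ss_1}$, and \"ubereven-ness is a red herring: whether and how $\Is$ moves $\ss_1$ is governed by data at the depth of the parent (the coefficient of $\varpi^{e\ds}$ in a representative root), not by the internal structure of $\ss_1$, and the claim that a moved grandchild forces $\denom d_{\ss_1}$ to divide the orbit length of $\ss_1$ has no justification. Second, Lemma \ref{thm1} concerns the orbit of a single root under all of $I$ via an lcm of denominators; it does not by itself separate ``the root moved within $\ss'$'' from ``$\ss'$ moved as a block'', which is precisely the dichotomy the $\varpi$-adic expansion (equivalently, the computation later used in Lemma \ref{staborder}) is needed to settle. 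As written, the proposal would need to be rebuilt around that expansion to become a proof.
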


\begin{proof}
We will use the following claim:

\begin{claim}
	Let $\alpha, \beta \in \cR$. Let $\ss$ be a cluster of depth $\ds=\frac{N}{e}$ and suppose that $\alpha \in \ss$. Then $\beta \in \ss$ if and only if $\alpha \equiv \beta \bmod{\varpi^N}$. 
	
	Moreover, if $\beta \in \ss$, then $\alpha$ and $\beta$ are in the same child of $\ss$ if and only if $\alpha \equiv \beta \bmod{\varpi^{N+1}}$.
\end{claim}

	Let $\ss'$ be the child containing $\alpha=\sum\limits_{j \geqslant 0}a_j\varpi^j$ where the nonzero $a_j$ ahve valuation $0$. Write $\Is=\langle \sigma \rangle$ and note that $\sigma(\alpha)=\sum\limits_{j=0}^{N-1} a_j \varpi^j + \sum\limits_{j \geqslant N} a_j \sigma(\varpi^j)$ since $\sigma \in \Is$.
	
	If $a_N=0$, then $a_N\varpi^N=a_N\sigma(\varpi^N)$ so $\ss'$ is fixed by $\Is$ by the second part of the claim. Otherwise $a_N \neq 0$ and the size of the orbit of $\ss'$ is equal to the size of the orbit of $\sigma$ on $\varpi^N$; this is independent of $a_N$ and hence the orbits of all such children will have equal length.
	
	All that remains is to prove the claim. Since $v(\varpi)=\frac{1}{e}$, we have that
$$\beta \in \ss \Leftrightarrow v(\alpha-\beta) \geqslant \ds=\frac{N}{e} \Leftrightarrow\sum\limits_{j=0}^{N-1} a_j \varpi^j = \sum\limits_{j=0}^{N-1} b_j \varpi^j.$$ 
	
	Similarly, $$\beta \in \ss' \Leftrightarrow v(\alpha-\beta)>\ds \Leftrightarrow \sum\limits_{j=0}^N a_j \varpi^j = \sum\limits_{j=0}^N b_j \varpi^j.$$
\end{proof}

\begin{remark}
	The existence of a child with $a_N=0$ is precisely what we refer to as a \emph{orphan}, which proves its uniqueness if it exists.
\end{remark}

\begin{lemma}[=Theorem \ref{mainpf}iii]
\label{staborder}
	Let $\ss'$ be a non-orphan of $\ss$. Then the length of the orbit of $\ss'$ under $\Is$ is equal to $\denom (\ds [I:\Is])$.
\end{lemma}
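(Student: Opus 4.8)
The strategy is to analyse the action of a generator $\sigma$ of $\Is$ directly on the $\varpi$-adic expansion of a representative, building on the Claim proved in the previous lemma. Fix a non-orphan child $\ss'$ of $\ss$, write $\ds = \frac{N}{e}$, and pick $\alpha = \sum_{j\geqslant 0} a_j \varpi^j \in \ss'$ with the nonzero $a_j$ of valuation $0$. Since $\ss'$ is not an orphan, the Claim from the proof of Theorem \ref{mainpf}ii tells us that the $\varpi^N$-coefficient of $\alpha$ (relative to $\ss$) is nonzero, and the length of the orbit of $\ss'$ under $\Is = \langle \sigma \rangle$ equals the length of the orbit of $\sigma$ acting on the monomial $\varpi^N$. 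So the entire lemma reduces to computing: given that $\Is$ has order $|\Is| = \frac{e}{[I:\Is]}$, what is the orbit length of $\varpi^N$ under $\Is$?

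First I would pin down how $\sigma$ acts on $\varpi$. As $K(\cR)/K$ is tamely totally ramified of degree $e$ after passing to the unramified closure, a generator $i$ of $I$ satisfies $i(\varpi) = \zeta_e \varpi \cdot u$ for some primitive $e$-th root of unity $\zeta_e$ and a $1$-unit $u$; for the purpose of computing valuations/denominators one may as well take $i(\varpi) = \zeta_e\varpi$, and then $\sigma = i^{[I:\Is]}$ acts by $\sigma(\varpi) = \zeta_e^{[I:\Is]}\varpi$, i.e. by a primitive $|\Is|$-th root of unity. Then $\sigma(\varpi^N) = \zeta_e^{N[I:\Is]}\varpi^N$, so the orbit length of $\varpi^N$ under $\Is$ is the multiplicative order of $\zeta_e^{N[I:\Is]}$, which is
$$\frac{|\Is|}{\gcd(|\Is|,\, N[I:\Is]/e \cdot |\Is|)} \;=\; \frac{e/[I:\Is]}{\gcd\!\left(e/[I:\Is],\, N\right)} \;=\; \denom\!\left(\frac{N[I:\Is]}{e}\right) \;=\; \denom(\ds[I:\Is]),$$
where the middle equality is the elementary fact that for a primitive $m$-th root of unity $\zeta_m$, the order of $\zeta_m^{k}$ is $m/\gcd(m,k)$, applied with $m = |\Is| = e/[I:\Is]$ and $k = N[I:\Is]\cdot |\Is|/e = N$. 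This identity $\frac{m}{\gcd(m,N)} = \denom(N/m)$ is exactly the statement that dividing out the gcd gives a reduced fraction.

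The one genuine subtlety — and the step I expect to require the most care — is justifying that replacing the true uniformiser action $i(\varpi) = \zeta_e\varpi u$ by the clean $i(\varpi) = \zeta_e\varpi$ does not change the answer, i.e. that the higher-order $1$-unit correction terms cannot alter the valuation of $\sigma(\alpha) - \alpha$ at the critical order $N$. Here the point is that $\alpha \equiv \sum_{j=0}^{N}a_j\varpi^j \bmod \varpi^{N+1}$, and in this truncation the coefficients $a_0,\dots,a_{N-1}$ are fixed by $\sigma$ (by the first part of the Claim, since $\sigma\in\Is$), so $\sigma(\alpha) - \alpha \equiv a_N(\sigma(\varpi^N) - \varpi^N) \bmod \varpi^{N+1}$; whether this is zero modulo $\varpi^{N+1}$, and for which powers of $\sigma$, depends only on whether $\sigma$ fixes $\varpi^N$ modulo $\varpi^{N+1}$, which in turn depends only on the root-of-unity scalar by which $\sigma$ multiplies $\varpi$ — the $1$-unit $u$ contributes only at strictly higher order and so is irrelevant. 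Once this is spelled out, the orbit length of $\ss'$ equals the orbit length of $\varpi^N$ under $\Is$, and the computation above finishes the proof.
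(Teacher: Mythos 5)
Your proof is correct and follows essentially the same route as the paper: it reduces, via the Claim from the proof of part (ii), to the orbit of $\varpi^N$ under $\Is$, uses $i(\varpi)=\zeta_e\varpi$ to see that a generator of $\Is$ scales $\varpi^N$ by $\zeta_e^{N[I:\Is]}$, and identifies the order of this root of unity with $\denom(\ds[I:\Is])$. The only differences are cosmetic: the paper verifies the final gcd/denominator identity by writing $\ds=a/b$ in lowest terms instead of invoking the order of a power of a primitive root of unity, and it simply takes a uniformiser with $i(\varpi)=\zeta_e\varpi$ (available in the tame case), so your extra discussion of the $1$-unit correction is unnecessary but harmless.
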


\begin{proof}
	First let $\ds=\frac{N}{e}$, $\alpha = \sum\limits_{j \geqslant 0} a_j\varpi^j \in \ss'$ and $[I:\Is]=k$ so $\Is=\langle i^k \rangle$. As $\ss'$ is not an orphan, $a_N \neq 0$. Then
	\begin{eqnarray*}
		(i^k)^l(\alpha) \in \ss' &\Leftrightarrow & i^{kl}(\varpi^N)=\varpi^N, \\
								  &\Leftrightarrow & e|klN  \qquad \qquad \text{ as }i(\varpi)=\zeta_e\varpi, \\
								  &\Leftrightarrow & \frac{e}{\gcd(e,kN)} \, | l,
	\end{eqnarray*}
	and hence the orbit length is precisely $\frac{e}{\gcd(e,kN)}$. Now let $\ds=\frac{a}{b}=\frac{N}{e}$ with $\gcd(a,b)=1$ and therefore $N=a\gcd(N,e), e=b\gcd(N,e)$. Hence
	
	\begin{eqnarray*}
	  \frac{e}{\gcd(e,kN)} &=& \frac{b \gcd(N,e)}{\gcd(b\gcd(N,e),ak\gcd(N,e))}, \\
	  &=& \frac{b}{\gcd(b,ak)}, \\
	  &=& \frac{b}{\gcd(b,k)},
	\end{eqnarray*}
	since $\gcd(a,b)=1$. As $\denom(\ds [I:\Is])=\frac{b}{\gcd(b,k)}$, we are done.
\end{proof}

\begin{corollary}
\label{lcmcor}
	Let $\ss$ be a cluster and $\ss'$ a non-orphan of $\ss$. Then the orbit of $\ss'$ under $I$ is equal to $\lcm (\denom \ds, [I:\Is])$.
\end{corollary}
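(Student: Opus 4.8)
The plan is to relate the $I$-orbit of $\ss'$ to its $\Is$-orbit via the tower of stabilisers, and then to reduce to the elementary identity $n\cdot\denom(\tfrac{a}{b}n)=\lcm(n,b)$ valid whenever $\gcd(a,b)=1$.

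First I would observe that $\Stab_I(\ss')\subseteq\Is$. Indeed, any $\sigma\in I$ induces an automorphism of the cluster picture preserving inclusions, so it sends $P(\ss')$ --- the unique smallest cluster strictly containing $\ss'$ --- to the unique smallest cluster strictly containing $\sigma(\ss')$. If $\sigma$ fixes $\ss'$ then this forces $\sigma(P(\ss'))=P(\ss')=\ss$, i.e. $\sigma\in\Is$. Consequently $\Stab_I(\ss')=\Stab_{\Is}(\ss')$, and by multiplicativity of indices the length of the $I$-orbit of $\ss'$ is
\[ [I:\Stab_I(\ss')] = [I:\Is]\cdot[\Is:\Stab_{\Is}(\ss')] = [I:\Is]\cdot\bigl(\text{length of the }\Is\text{-orbit of }\ss'\bigr). \]
Since $\ss'$ is a non-orphan of $\ss$, Lemma \ref{staborder} gives that the $\Is$-orbit of $\ss'$ has length $\denom(\ds [I:\Is])$, so the $I$-orbit of $\ss'$ has length $[I:\Is]\cdot\denom(\ds [I:\Is])$.

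It then remains to check the purely arithmetic identity $[I:\Is]\cdot\denom(\ds [I:\Is])=\lcm(\denom\ds,[I:\Is])$. Writing $n=[I:\Is]$ and $\ds=\tfrac{a}{b}$ with $\gcd(a,b)=1$, one has $\denom(\ds n)=\denom(\tfrac{an}{b})=\tfrac{b}{\gcd(b,n)}$, using that $\gcd\bigl(a,\tfrac{b}{\gcd(b,n)}\bigr)=1$; hence $n\cdot\denom(\ds n)=\tfrac{nb}{\gcd(n,b)}=\lcm(n,b)=\lcm([I:\Is],\denom\ds)$, as required.

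I do not expect a genuine obstacle here, since the statement is a short corollary of Lemma \ref{staborder}. The only points needing care are (i) justifying $\Stab_I(\ss')\subseteq\Is$ from the fact that $I$ acts by automorphisms of the cluster picture, and (ii) keeping track of the coprimality of $a$ and $b$ in the final gcd manipulation so as not to cancel spuriously.
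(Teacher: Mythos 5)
Your proposal is correct and follows the same route as the paper: apply Lemma \ref{staborder} to get the $\Is$-orbit length $\denom(\ds[I:\Is])$, multiply by $[I:\Is]$, and simplify via $[I:\Is]\cdot\frac{\denom\ds}{\gcd(\denom\ds,[I:\Is])}=\lcm(\denom\ds,[I:\Is])$. The only difference is that you spell out the step the paper leaves implicit, namely $\Stab_I(\ss')\subseteq\Is$ via parents being preserved, which is a worthwhile (and correct) clarification.
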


\begin{proof}
	From Lemma \ref{staborder}, the length under $I$ is $$[I:\Is] \times \denom(\ds[I:\Is])= [I:\Is]\dfrac{\denom \ds}{\gcd(\denom \ds, [I:\Is])}.$$
\end{proof}

\begin{lemma}[=Theorem \ref{mainpf}iv]
\label{mainiv}
	Let $\ss$ be a cluster. Then $$[I:\Is] = \lcm_{\ss \subsetneq \ss'} \denom d_{\ss'}^*,$$ where for a cluster $\ss' \supsetneq \ss$, $$d_{\ss'}^*=\begin{cases}
		1 &\text{ if the child of $\ss'$ containing $\ss$ is an orphan}, \\
		d_{\ss'} &\text{ else}.
	\end{cases}$$
\end{lemma}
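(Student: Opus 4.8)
\textbf{Proof proposal for Lemma \ref{mainiv} (= Theorem \ref{mainpf}iv).}

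The plan is to prove the two divisibility relations $[I:\Is] \mid \lcm_{\ss \subsetneq \ss'} \denom d_{\ss'}^*$ and $\lcm_{\ss \subsetneq \ss'} \denom d_{\ss'}^* \mid [I:\Is]$ separately, working prime-by-prime and reducing everything to the orbit-length computations already established in Lemma \ref{staborder} and Corollary \ref{lcmcor}. The key observation is that $[I:\Is]$ is exactly the orbit length of $\ss$ under $I$, and that this orbit factors through the chain of ancestors $\ss \subsetneq \ss' \subsetneq \ss'' \subsetneq \cdots \subsetneq \cR$: an inertia element fixes $\ss$ if and only if it fixes every cluster in this chain and, at each level, fixes $\ss$ as a child of its parent. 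So I would induct on the number of proper clusters strictly containing $\ss$, the base case being $\ss = \cR$ where both sides are $1$.

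For the inductive step, let $\ss' = P(\ss)$ be the parent. Then $\Stab_I(\ss) = \Stab_{I_{\ss'}}(\ss)$, so $[I:\Is] = [I:I_{\ss'}] \cdot [I_{\ss'}:\Is]$. The first factor is handled by induction: $[I:I_{\ss'}] = \lcm_{\ss' \subsetneq \ss''} \denom d_{\ss''}^{*}$, where the starred depths are computed relative to $\ss'$. The second factor $[I_{\ss'}:\Is]$ is the orbit length of $\ss$ as a child of $\ss'$ under $I_{\ss'}$: if $\ss$ is an orphan of $\ss'$ this is $1 = \denom d_{\ss'}^{*}$ (matching the definition), and if $\ss$ is a non-orphan this is $\denom(d_{\ss'}[I:I_{\ss'}])$ by Lemma \ref{staborder}. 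The remaining task is purely arithmetic: show that $\lcm_{\ss \subsetneq \ss''} \denom d_{\ss''}^{*}$ (taken over ancestors of $\ss$, with starred depths relative to $\ss$) equals $[I:I_{\ss'}] \cdot [I_{\ss'}:\Is]$. For an ancestor $\ss'' \supsetneq \ss'$, the child of $\ss''$ containing $\ss$ is an orphan if and only if the child of $\ss''$ containing $\ss'$ is an orphan — both are the same cluster — so the starred depth $d_{\ss''}^{*}$ is the same whether computed relative to $\ss$ or relative to $\ss'$, and the $\ss'' \supsetneq \ss'$ part of the $\lcm$ is unchanged. Thus it suffices to check $\lcm\big([I:I_{\ss'}],\, \denom d_{\ss'}^{*}\big) = [I:I_{\ss'}] \cdot [I_{\ss'}:\Is]$, which in the non-orphan case is exactly the content of Corollary \ref{lcmcor} (with $\ss'$ in place of the base cluster and $\ss$ its non-orphan child), and in the orphan case is the trivial identity $[I_{\ss'}:\Is]=1$.

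The main obstacle, I expect, is bookkeeping the subtlety flagged in the Remark after Theorem \ref{main}: $d_{\ss'}^{*}$ depends on $\ss$, not just on $\ss'$, so the two $\lcm$'s being compared are over starred depths computed with respect to \emph{different} base clusters, and I need the observation above that they nevertheless agree on the common range $\ss'' \supsetneq \ss'$ because ``is an orphan'' is a property of a child of $\ss''$ and the children of $\ss''$ containing $\ss$ and containing $\ss'$ coincide. A secondary care point is that Lemma \ref{staborder} and Corollary \ref{lcmcor} are stated for orbits under $\Is$ and under $I$ respectively, whereas in the induction I apply them with the ambient group $I_{\ss'}$; this is harmless since $I_{\ss'}$ is itself cyclic and the proofs of those lemmas only use cyclicity of the acting group and the relation $i(\varpi) = \zeta_e \varpi$, but it should be remarked on explicitly. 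Once the orphan/non-orphan dichotomy is threaded correctly, the rest is a routine $\lcm$ manipulation of the kind already appearing in the proof of Corollary \ref{lcmcor}.
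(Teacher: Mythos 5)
Your proposal is correct and is essentially the paper's own argument: a top-down induction over the ancestors of $\ss$, using Lemma \ref{staborder}/Corollary \ref{lcmcor} for the non-orphan step and the identity $\Is=I_{P(\ss)}$ when $\ss$ is an orphan, with the observation that the starred depths agree for ancestors of $P(\ss)$ playing the role of the paper's ``omit the term and iterate inwards''. Your secondary care point is moot: Lemma \ref{staborder} is already stated for the orbit of a child under its parent's stabiliser, so no re-derivation with a smaller ambient group is needed.
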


\begin{proof}
	First suppose that neither $\ss$ nor any ancestor of $\ss$ is an orphan. If $\ss$ is the top cluster then the result follows trivially. Otherwise, by Corollary \ref{lcmcor}, the orbit length of $\ss$ (equivalently the index of the stabiliser) under $I$ is $\lcm ([I:I_{P(\ss)}], \denom d_{P(\ss)}).$ By induction, we hence have $$[I:\Is]=\lcm_{\ss \subsetneq \ss'} \denom d_{\ss'}.$$
	
	Now suppose that $\ss$ is an orphan, but no ancestor\footnote{Continuing the familial nomenclature, an ancestor of $\ss$ is a cluster properly containing $\ss$.} of $\ss$ is. Then $$[I:\Is]=[I:I_{P(\ss)}]=\lcm _{P(\ss) \subsetneq \ss'} \denom d_{\ss'} = \lcm _{\ss \subsetneq \ss'} \denom d_{\ss'}^*.$$
	
	More generally, we see that if some ancestor $\ss'$ is an orphan, then $[I:I_{\ss'}]=[I:I_{P(\ss')}]$ and so we omit the term $d_{P(\ss')}$ and iterate inwards.
\end{proof}

\begin{remark}
	In \cite{DDMM}, the authors also introduce the notion of relative depth of a (proper) cluster, defined by $$\delta_{\ss}=\begin{cases} d_{\ss}-d_{P(\ss)} &\text{ if } \ss \neq \cR, \\
	d_{\cR} &\text{ if } \ss=\cR.
	\end{cases}$$
	One can then ask whether the results of Theorem \ref{mainpf} hold when the depth is replaced by the relative depth instead. In fact, it holds that $\lcm_{\ss} \denom \delta_{\ss} = |I|$ since for two distinct rational numbers $\alpha_1,\alpha_2$, we have the equality $$\lcm (\denom \alpha_1, \denom \alpha_2) = \lcm (\denom \alpha_1, \denom (\alpha_1-\alpha_2)).$$ Unfortunately, the analogous version of Theorem \ref{mainpf}iii (and hence also iv) fail; a counterexample is given by the polynomial $$f=(x^2-p)(x^3-p^2) \in \qp[x],$$ whose orphan cluster picture (with standard depths) is below.
	
\begin{center}
\clusterpicture
\Root(0.00,2)A(a1);     
\Root(0.40,2)A(a2);     
\Root(1.00,2)A(d1);
\Root(1.40,2)A(d2);
\Root(1.80,2)A(d3);
\ClusterG(s4){(d1)(d2)(d3)}{$\frac{2}{3}$};
\ClusterL(s5){(a1)(a2)(s4)}{$\frac{1}{2}$};
\endclusterpicture
\end{center}

Now the orphan cluster $\ss$ trivially has stabiliser $I$, and all roots in $\ss$ have order $3$ under $I_{\ss}$. However, $\delta_{\ss}=\frac{1}{6}$ which proves our counterexample as $3 \neq \denom(\delta_{\ss}[I:\Is])$.
\end{remark}

\section{Proof of Theorem \ref{repthm}}
In this section, we shall prove the closed formula for the inertia representation. In particular, given a cluster $\ss$, we are interested in a closed formula for the induced representation $\Ind_{\Is}^I V_{\ss}$, where $$V_{\ss} = \gamma_{\ss} \otimes (\cc[\ss^{odd}] \ominus \mathds{1}) \ominus \epsilon_{\ss}.$$

We restate the main theorem below to begin with, before proving a couple of representation theoretic lemmas and circling back to the theorem.

\begin{theorem}
\label{repthmpf}
	Let $C/K: y^2=f(x)$ be a hyperelliptic curve with tame reduction. Let $\ss$ be a cluster and suppose that $\gamma_{\ss}$ has order $t$. Then 
	\begin{eqnarray*}
		\Ind_{\Is}^I V_{\ss} &=&  \left\lfloor \dfrac{|\ss^{odd}|}{n'_{\ss}} \right\rfloor \bigoplus\limits_{d|n'_{\ss}} \bigoplus\limits_{s \in S_{d,t}} \bigoplus\limits_{n_1|\frac{n_{\ss}}{\gcd(n_{\ss},s^{\infty})}} \alpha_{d,t,s} \beta(n_{\ss},s) \rho_{s\gcd(n_{\ss},s^{\infty})n_1} \\
		&\oplus& \left( |\ss^{odd}| - n'_{\ss}\left\lfloor \dfrac{|\ss^{odd}|}{n'_{\ss}} \right\rfloor -1 \right) \frac{\beta(n_{\ss},t)}{\varphi(t)} \bigoplus\limits_{n_2|\frac{n_{\ss}}{\gcd(n_{\ss},t^{\infty})}} \rho_{tn_2\gcd(n_{\ss},t^{\infty})} \\
		&\ominus & \Ind_{\Is}^I \es, 
	\end{eqnarray*}	
	where 
	
	$$\Ind_{\Is}^I \es = \begin{cases}
	0 &\text{if $\es=0$}; \\
	\bigoplus\limits_{m|n_{\ss}} \rho_m &\text{ if $\es$ is trivial}; \\
	\bigoplus\limits_{\substack{m|2n_{\ss} \\ m\nmid n_{\ss}}} \rho_m &\text{ if $\es$ has order $2$}.
	\end{cases}$$
\end{theorem}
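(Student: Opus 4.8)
The plan is to compute the character of $\Ind_{\Is}^I V_{\ss}$ directly and match it against the claimed decomposition into the representations $\rho_m$. Since $I$ is cyclic (tameness) and every $\rho_m$ is the sum of all faithful characters of $C_m$, it suffices to know, for each divisor $m$ of $e=|I|$, the multiplicity with which a faithful character of $C_m$ appears in $\Ind_{\Is}^I V_{\ss}$; by symmetry this multiplicity is the same for all $\varphi(m)$ such characters, so the representation is automatically a non-negative integer combination of the $\rho_m$ once we check the bookkeeping. I would first dispose of the $\ominus \Ind_{\Is}^I \es$ term: since $\es$ is $0$, $\mathds{1}$, or an order-two character of $\Is$, and $[I:\Is]=n_{\ss}$, the induced representation is respectively $0$, $\cc[C_{n_{\ss}}]=\bigoplus_{m|n_{\ss}}\rho_m$, or $\cc[C_{2n_{\ss}}]\ominus\cc[C_{n_{\ss}}]=\bigoplus_{m|2n_{\ss},\,m\nmid n_{\ss}}\rho_m$ (using that an order-two character of $\Is$ exists only when $n_{\ss}$ is even — wait, rather when $2n_{\ss}$ admits the relevant faithful characters; in any case this is the elementary computation recorded in Proposition~\ref{toricprop}'s proof). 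This matches the stated formula for $\Ind_{\Is}^I\es$ verbatim.

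The substantive part is $\Ind_{\Is}^I\big(\gamma_{\ss}\otimes(\cc[\ss^{odd}]\ominus\mathds{1})\big)$. Here I would split $\cc[\ss^{odd}]$ according to the $\Is$-orbits on $\ss^{odd}$. By Theorem~\ref{mainpf}ii the non-orphan children of $\ss$ break into $\lfloor |\ss^{odd}|/n'_{\ss}\rfloor$ (or one more, absorbing the possible orphan) orbits of equal length $n'_{\ss}=\denom(\ds n_{\ss})$, plus possibly a single fixed child (the orphan, when it is odd). So as an $\Is$-representation $\cc[\ss^{odd}]$ is a sum of $\lfloor|\ss^{odd}|/n'_{\ss}\rfloor$ copies of $\cc[\Is/\Stab]$ where $\Stab$ is the index-$n'_{\ss}$ subgroup, plus $\big(|\ss^{odd}|-n'_{\ss}\lfloor|\ss^{odd}|/n'_{\ss}\rfloor\big)$ copies of the trivial representation of $\Is$ coming from the orphan (if present). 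Tensoring with $\gamma_{\ss}$ and subtracting the $\gamma_{\ss}\otimes\mathds{1}=\gamma_{\ss}$ from the $\ominus\mathds{1}$ factor accounts for the ``$-1$'' and ``$-\frac{\beta(n_{\ss},t)}{\varphi(t)}\bigoplus\rho_{\cdots}$'' features; then $\Ind_{\Is}^I$ is applied term by term using induction in stages: $\Ind_{\Is}^I\Ind_{\Stab}^{\Is}(\gamma_{\ss}|_{\Stab})=\Ind_{\Stab}^I(\gamma_{\ss}|_{\Stab})$.

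The crux is therefore a single clean lemma: for a subgroup $H\le I=C_e$ of index $n$ and a character $\chi$ of $H$ of order $t$, decompose $\Ind_H^I\chi$ into $\rho_m$'s. One checks by Frobenius reciprocity that a faithful character of $C_m$ ($m|e$) occurs iff $H\subseteq C_m$, i.e.\ iff $(e/m)\mid n$, and its restriction to $H$ equals $\chi$; counting which $m$ satisfy this — with the constraint that the order of $\chi$ (namely $t$) must divide $m$ and that $m/t$ must be built from primes dividing $\gcd(n,t^\infty)$ times an arbitrary divisor of $n/\gcd(n,t^\infty)$ coprime to $t$ — produces exactly the index sets $\bigoplus_{s\in S_{d,t}}\bigoplus_{n_1\mid n/\gcd(n,t^\infty)}$ and the multiplicity factor $\beta(n,t)/\varphi(t)$, where $s$ tracks the ``$\gcd$-overlap'' combinatorics encoded in $S_{d,t}$ and the $\alpha_{d,t,s}$ weights record how the various orbit-lengths $d\mid n'_{\ss}$ contribute. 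The main obstacle I anticipate is precisely managing this combinatorics of which divisors $m$ arise and with what multiplicity: keeping track simultaneously of the orbit length $n'_{\ss}$ (through its divisors $d$), the character order $t$, and the index $n_{\ss}$, and verifying that the messy-looking coefficients $\alpha_{d,t,s}\beta(n_{\ss},s)$ are exactly the ones produced by Frobenius reciprocity and collapse to genuine integers after summing over $d$ and $s$ — this is where all the definitions of $A_{d,t}$, $S_{d,t}$, $Q_{q,k}$, and $\beta$ have to be reverse-engineered to fit. Everything else (the orphan case, induction in stages, the $\es$ term) is routine once this lemma is in hand.
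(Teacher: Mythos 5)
Your skeleton is essentially the paper's: the paper also begins by splitting $\cc[\ss^{odd}]\ominus\mathds{1}$ into $\Is$-orbits with the orphan contributing trivial summands (Lemma \ref{orbsplit}), and it disposes of $\Ind_{\Is}^I\es$ exactly as you do, via the elementary computation in Proposition \ref{toricprop}. Where you genuinely diverge is the main term: you use the projection formula and transitivity to rewrite $\gamma_{\ss}\otimes\cc[C_{n'_{\ss}}]$ induced to $I$ as a single induction $\Ind_{H}^{I}(\gamma_{\ss}|_{H})$, with $H\leqslant\Is$ the stabiliser of a non-orphan odd child, to be expanded by Frobenius reciprocity; the paper instead keeps the two steps separate, first proving the twisting identity $\gamma_t\otimes\rho_d=\bigoplus_{s\in S_{d,t}}\alpha_{d,t,s}\rho_s$ (Lemmas \ref{twist} and \ref{gentwist}) and then the induction identity for $\rho_s$ (Lemma \ref{ind}, Corollary \ref{genind}). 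Your route would compute the correct representation in principle.

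The gap is that the step you defer is the entire quantitative content of the theorem, and nothing in your sketch produces it. Concretely: the coefficient $\alpha_{d,t,s}$, and in particular the factor $\frac{q-2}{q-1}$ coming from $Q_{q,k}$, arises because two characters of the \emph{same} prime-power order $q^r$ can multiply to a character of any lower order $q^j$ (Lemma \ref{twist}ii); in your Frobenius-reciprocity formulation this resurfaces as the fact that a character $\psi$ of $I$ with $\psi|_{H}=\gamma_{\ss}|_{H}$ need not have order divisible by $t$ at the primes of $A_{d,t}$, and enumerating such $\psi$ by order and multiplicity is precisely the computation you acknowledge not having done. Moreover, the character you actually induce has order $t/\gcd(t,n'_{\ss})$ and the index of $H$ in $I$ is $n_{\ss}n'_{\ss}$, so your ``crux lemma'' as stated (index $n$, character of order $t$) is not the statement the application needs; recovering the theorem's grouping over $d\mid n'_{\ss}$, $s\in S_{d,t}$ and $n_1\mid n_{\ss}/\gcd(n_{\ss},s^{\infty})$ requires a regrouping that effectively re-proves the paper's Lemmas \ref{gentwist} and \ref{ind}. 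Finally, your opening reduction --- ``by symmetry the multiplicity is the same for all $\varphi(m)$ characters of order $m$, so the representation is automatically an integer combination of the $\rho_m$'' --- is false for the individual pieces you induce: $\Ind_{\Is}^{I}\gamma_{\ss}$ enters with weight $\frac{1}{\varphi(t)}$ and single terms carry weights such as $\frac{q-2}{q-1}$ or $\frac{1}{2}$ (see the extended example, where $\Ind V_{\ss_1}$ is assembled from two copies of $\frac{1}{2}\rho_9$); integrality holds only for the assembled $\Ind_{\Is}^{I}V_{\ss}$ and is a conclusion of the computation, not an a priori symmetry, so the bookkeeping reduction itself needs justification.
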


To assist the reader in navigating this notational minefield, we will reintroduce the extra notation as required. First, recall that for an integer $d$, we let $\rho_d$ denote the direct sum of all characters of order $d$ of $\cc[C_d]$. We will begin by first describing the orders of the tensor product of characters, before examining how the induction of a character across cyclic groups breaks up.

\begin{lemma}
\label{twist}
Let $A$ be a non-trivial finite cyclic group and let $\gamma_n, \gamma'_m$ be characters of $A$ of prime power orders $n$ and $m$ respectively.
\begin{enumerate}
	\item If $n \neq m$, then $\gamma_n \otimes \gamma'_m$ has order $\lcm(n,m)$;
	\item If $n=m=q^r$ for some prime $q$, then $\gamma_n \otimes \gamma'_m$ has order dividing $n$. More generally, $$\gamma_n \otimes \rho_n = \frac{q-2}{q-1}\rho_n \oplus \bigoplus\limits_{j=0}^{r-1} \rho_{q^j}.$$
	
	Equivalently, $\gamma_n \otimes \rho_n$ consists exactly of $\varphi(q^k)$ characters of order $q^k$ for each $0 \leqslant k \leqslant r-1$ and $(q-2)q^{r-1}$ characters of order $q^r$, with every character summand distinct.
	\end{enumerate}
\end{lemma}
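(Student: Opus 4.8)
The plan is to work character-by-character, since the orders of products of characters of a cyclic group are governed entirely by the $q$-adic valuations of their orders. Fix an isomorphism $A \twoheadrightarrow \mu_{|A|}$ and identify a character of order $d$ with a primitive $d$-th root of unity via this quotient; then the tensor product corresponds to multiplication of roots of unity, and the order of $\gamma_n \otimes \gamma'_m$ is the order of the product $\zeta_n \zeta'_m$ in $\cc^\times$. For part (i): if $n = q^a$ and $m = \ell^b$ are coprime prime powers, then $\zeta_n \zeta'_m$ has order $\lcm(n,m)=nm$ because its $q$-part has order exactly $q^a$ and its $\ell$-part has order exactly $\ell^b$; if instead $n = q^a \neq q^b = m$ with $a > b$ say, then $\zeta_n\zeta'_m$ has order exactly $q^a = \lcm(n,m)$, since in the group $\mu_{q^a}$ a generator times any element of strictly smaller order is still a generator. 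This disposes of (i).

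For part (ii), the key computation is to count, for each $0 \le k \le r$, how many of the $\varphi(q^r)$ characters $\gamma_n \otimes \chi$ (as $\chi$ ranges over the characters of order $q^r$ making up $\rho_n$) have order exactly $q^k$. Working multiplicatively in $\mu_{q^r} \cong \zz/q^r\zz$, write $\gamma_n \leftrightarrow a$ where $\gcd(a,q)=1$ (fixed), and $\chi \leftrightarrow x$ with $\gcd(x,q)=1$; then $\gamma_n\otimes\chi \leftrightarrow a + x$, and I want the distribution of $v_q(a+x)$ as $x$ ranges over units of $\zz/q^r\zz$. The count of units $x$ with $v_q(a+x) \ge j$ (equivalently $x \equiv -a \bmod q^j$) is $q^{r-j}$ for $1 \le j \le r$ minus a correction only when $j \ge 1$ forces $x$ itself non-unit — but $a$ is a unit so $x \equiv -a$ is automatically a unit, hence the count is exactly $q^{r-j}$ for each $1 \le j \le r$. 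Therefore the number of $x$ with $v_q(a+x) = j$ is $q^{r-j} - q^{r-j-1} = \varphi(q^{r-j})$ for $1 \le j \le r-1$, and the number with $v_q(a+x) = r$ (i.e. $a+x \equiv 0$) is $q^0 = 1$... wait, recount: $v_q(a+x) \ge r$ has $q^{r-r}=1$ solution, $v_q(a+x)\ge r-1$ has $q$ solutions, so $v_q = r-1$ occurs $q-1$ times and $v_q \ge r$ (the zero class, corresponding to the trivial character $\rho_1 = \mathds{1}$, appearing with some multiplicity) occurs once; meanwhile $v_q(a+x) = 0$ occurs $\varphi(q^r) - \sum_{j\ge 1}(\#\{v_q = j\})$ times. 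Re-indexing $k = r - j$, a character of order $q^k$ (for $0 < k < r$) arises $\varphi(q^k)$ times, one character of order $q^0 = 1$ arises once, and order $q^r$ arises $\varphi(q^r) - \sum_{k=0}^{r-1}\varphi(q^k) = q^r - q^{r-1} - q^{r-1} = (q-2)q^{r-1}$ times. Collecting these gives exactly $\gamma_n \otimes \rho_n = \frac{q-2}{q-1}\rho_n \oplus \bigoplus_{j=0}^{r-1}\rho_{q^j}$, since $(q-2)q^{r-1} = \frac{q-2}{q-1}\varphi(q^r)$.

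The distinctness of the summands is automatic once one checks that the map $\chi \mapsto \gamma_n \otimes \chi$ is injective on characters (it is, being translation by a fixed element in the character group), so all $\varphi(q^r)$ products $\gamma_n\otimes\chi$ are pairwise distinct and the multiplicities above record an honest direct sum decomposition. The only genuinely delicate point is the bookkeeping in part (ii) — making sure the valuation-$j$ counts are $\varphi(q^{r-j})$ and that the leftover count at $j=0$ comes out to $(q-2)q^{r-1}$ rather than $(q-1)q^{r-1}$; the subtlety is precisely that one unit ($x \equiv -a$) is ``lost'' to the trivial character, which is what produces the $q-2$ in place of $q-1$. Everything else is formal manipulation with roots of unity in a cyclic group, and the general (non-prime-power) order statement is not needed here since the lemma is only claimed for prime powers; later uses will chain these two cases together via the Chinese remainder theorem on the order.
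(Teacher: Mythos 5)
Your proposal is correct and follows essentially the same route as the paper: identify characters of the cyclic group with elements of $\zz/q^r\zz$ (equivalently roots of unity), note that tensoring corresponds to addition by a fixed unit, and count the $q$-adic valuations of $a+x$ as $x$ runs over units, which yields the multiplicities $\varphi(q^k)$ for $k<r$ and $(q-2)q^{r-1}$ for $k=r$. The paper leaves this count implicit and handles the equal-prime case of (i) by a short "order cannot drop" argument, whereas you compute orders of products of roots of unity directly, but these are the same idea; your explicit bookkeeping fills in exactly the step the paper compresses.
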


\begin{proof}
i): If $n,m$ are coprime, then $\gamma_n \otimes \gamma_m$ is necessarily a primitive character of $C_n \times C_m \cong C_{nm}$. Otherwise assume they are a power of the same prime and $n>m$ without loss of generality. Then $\gamma_n \otimes \gamma_m$ also has order $n$ since otherwise the order of $\gamma_n = \gamma_m^{-1} \otimes (\gamma_m \otimes \gamma_n)$ would be strictly smaller. Collating these two cases, we find the order is $\lcm(m,n)$. \\
ii): Identify the summands of $\rho_n$ with elements of $(\zz/q^r\zz)^{\times}$ and suppose without loss of generality that $\gamma_n=1$ under this identification. The tensor product then corresponds to addition so we are left to count the orders (under addition) of $a+1 \in \zz/q^r\zz$ when $a$ is a unit; this gives the claimed formula.
\end{proof}

We now want to describe $\gamma_t \otimes \rho_d$ in general. The second part of the above lemma is the reason for introducing the sets $A_{d,t}$ and $S_{d,t}$ to keep track of these cases. \\

\noindent \textbf{Notation.}

\begin{tabular}{ll}
$A_{d,t}$ & $= \{ q \text{ prime} \, | \, v_q(d)=v_q(t)>0 \}$; \\
$S_{d,t}$ &  $=\left\lbrace \left. \dfrac{\lcm(d,t)}{\prod\limits_{q_i \in A_{d,t}} q_i^{m_i}} \, \right| \, 0 \leqslant m_i \leqslant v_{q_i}(t) \right\rbrace;$ \\
$Q_{q,k}(s)$ & $=\begin{cases}
	\frac{q-2}{q-1} &\text{ if } v_q(s)=k \qquad \text{ where $q$ is prime}, \\
	1 &\text{ else }. \end{cases}$ \\
\end{tabular}
\vspace{10pt}

If $s \in S_{d,t}$, then we further define

\begin{tabular}{ll}
$\alpha_{d,t,s}$ & $=\dfrac{\varphi(d)}{\varphi(\lcm(d,t))}\prod\limits_{q_i \in A_{d,t}} Q_{q_i,v_{q_i}(t)}(s).$
\end{tabular}

\begin{lemma}
\label{gentwist}
Let $\rho_d$ be the direct sum of all characters of order $d$ of $\cc[C_d]$ and let $\gamma_t$ be a character of order $t$. Then $$\gamma_t \otimes \rho_d = \bigoplus\limits_{s \in S_{d,t}} \alpha_{d,t,s} \rho_s.$$
\end{lemma}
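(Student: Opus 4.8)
The plan is to reduce the general statement to the prime-power case handled in Lemma \ref{twist} by factoring everything according to the primes dividing $dt$. Write $d = \prod_q q^{a_q}$ and $t = \prod_q q^{b_q}$, and decompose the cyclic group $A$ (which I may take to be $C_{\lcm(d,t)}$, as all characters of orders dividing $\lcm(d,t)$ factor through it) as a product of its Sylow subgroups $C_{q^{c_q}}$, where $c_q = \max(a_q,b_q)$. A character of order $d$ is exactly a tensor product $\bigotimes_q \gamma^{(q)}$ where $\gamma^{(q)}$ has order $q^{a_q}$, and similarly $\gamma_t = \bigotimes_q \gamma_t^{(q)}$ with $\gamma_t^{(q)}$ of order $q^{b_q}$. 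Hence $\rho_d = \bigotimes_q \rho_{q^{a_q}}$ (external tensor product over the Sylow factors) and $\gamma_t \otimes \rho_d = \bigotimes_q \bigl( \gamma_t^{(q)} \otimes \rho_{q^{a_q}} \bigr)$. So it suffices to compute each factor $\gamma_t^{(q)} \otimes \rho_{q^{a_q}}$ and then recombine.

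For a fixed prime $q$, there are three cases. If $q \mid d$ but $a_q \neq b_q$ (including $b_q = 0$), then by Lemma \ref{twist}(i) every character summand of $\gamma_t^{(q)} \otimes \rho_{q^{a_q}}$ has order exactly $q^{\max(a_q,b_q)} = q^{c_q}$, and there are $\varphi(q^{a_q})$ of them, all distinct (twisting by a fixed character is a bijection on characters); so this factor is $\frac{\varphi(q^{a_q})}{\varphi(q^{c_q})}\rho_{q^{c_q}}$. If $q \nmid d$, the factor is trivial and contributes the single character $\gamma_t^{(q)}$ of order $q^{b_q} = q^{c_q}$, i.e. $\rho_{q^{c_q}}$, and $\varphi(q^{a_q}) = \varphi(1) = 1$ so the same formula $\frac{\varphi(q^{a_q})}{\varphi(q^{c_q})}\rho_{q^{c_q}}$ holds only when $b_q = 0$ too — one must be a little careful and simply note $q \notin A_{d,t}$ here so no ``splitting'' occurs. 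The interesting case is $q \in A_{d,t}$, i.e. $a_q = b_q = c_q =: r > 0$: then Lemma \ref{twist}(ii) gives $\gamma_t^{(q)} \otimes \rho_{q^r} = \frac{q-2}{q-1}\rho_{q^r} \oplus \bigoplus_{j=0}^{r-1}\rho_{q^j}$, i.e. a sum $\sum_{j=0}^{r} Q_{q,r}(q^j)\,\rho_{q^j}$ after rescaling by $\varphi(q^j)$ — this is precisely where the factor $Q_{q_i,v_{q_i}(t)}(s)$ in $\alpha_{d,t,s}$ comes from, recording how much of each order $q^0,\dots,q^r$ appears. Assembling the product over all $q$, a summand of $\gamma_t \otimes \rho_d$ has order $s = \prod_{q \notin A_{d,t}} q^{c_q} \cdot \prod_{q_i \in A_{d,t}} q_i^{m_i}$ with $0 \le m_i \le r_i = v_{q_i}(t)$; since $\prod_{q \notin A_{d,t}} q^{c_q} = \lcm(d,t)/\prod_{q_i\in A_{d,t}}q_i^{r_i}$, this is exactly the description of $S_{d,t}$ (after the re-indexing noted in the ``Determining cyclicity'' subsection). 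The multiplicity of $\rho_s$ is then $\frac{\varphi(d)}{\varphi(\lcm(d,t))}$ (the product of the $\frac{\varphi(q^{a_q})}{\varphi(q^{c_q})}$ over primes not in $A_{d,t}$, noting $\varphi$ is multiplicative and the $A_{d,t}$-part of this ratio is $1$) times $\prod_{q_i \in A_{d,t}} Q_{q_i, v_{q_i}(t)}(s)$, which is $\alpha_{d,t,s}$.

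The one genuine obstacle is purely bookkeeping rather than conceptual: verifying that the naive count from the prime-by-prime product matches $\alpha_{d,t,s}$ on the nose, in particular that distinctness of character summands is preserved under the external tensor product (so multiplicities multiply and nothing collapses), and that the edge cases $b_q = 0$ and $a_q = 0$ slot correctly into the formula $\frac{\varphi(d)}{\varphi(\lcm(d,t))}$ without being absorbed into $A_{d,t}$. I would dispatch this by writing $\varphi(\lcm(d,t)) = \prod_q \varphi(q^{c_q})$, splitting the product over $q \in A_{d,t}$ and $q \notin A_{d,t}$, and checking that for $q \notin A_{d,t}$ one has $\varphi(q^{a_q}) / \varphi(q^{c_q})$ contributing to the coefficient while for $q \in A_{d,t}$ the $\varphi(q^{a_q}) = \varphi(q^r)$ cancels against the $\varphi(q^j)$ hidden in the definition $a\rho_{q^j} = \rho_{q^j}^{\oplus a}$ to leave exactly the $Q$-factor. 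Everything else is a direct appeal to Lemma \ref{twist}.
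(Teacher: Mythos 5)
Your proposal is correct and follows essentially the same route as the paper: both reduce to Lemma \ref{twist}, applying part (i) at primes where $v_q(d)\neq v_q(t)$ and part (ii) at the primes of $A_{d,t}$, and then recombine the prime-local counts to recover the index set $S_{d,t}$ and the multiplicities $\alpha_{d,t,s}$. The only difference is organisational --- the paper twists in one step by the part of $\gamma_t$ of order coprime to the primes in $A_{d,t}$, obtaining $\frac{\varphi(d)}{\varphi(\lcm(d,t))}\rho_{\lcm(d,t)}$, and then iterates Lemma \ref{twist}(ii) over $A_{d,t}$, whereas you work with the full Sylow decomposition throughout; also, your worry about the case $q\nmid d$, $v_q(t)>0$ is unnecessary, since the coefficient $\varphi(q^{a_q})/\varphi(q^{c_q})$ is already correct there under the fractional convention $a\rho_d=\rho_d^{\oplus a}$.
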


\begin{proof}
	Write $\gamma_t=\gamma_{t/t_A} \otimes \gamma_{t_A}$, where $t_A$ is the maximal divisor of $t$ not divisible by any prime in $A_{d,t}$. Then by Lemma \ref{twist}i, $\gamma_{t_A} \otimes \rho_d =\frac{\varphi(d)}{\varphi(l)} \rho_{l}$, where $l=\lcm(d,t_A)=\lcm(d,t)$. Now $\gamma_{t/t_A} = \bigotimes\limits_{q_i \in A_{d,t}} \gamma_{q_i^{a_i}}$, where $a_i=v_{q_i}(t)$.
	Using Lemma \ref{twist}ii, we observe that each time we twist by a $\gamma_{q_i^{a_i}}$, the order of the character is divided by all possible powers of $q_i$, and hence the list of orders of the summands of $\gamma_t \otimes \rho_d$ is precisely the set $S_{d,t}$. Moreover the multiplicities of each component representation $\rho_s$ is equal to $\alpha_{d,t,s}$ noting that it is equal to $\frac{\varphi(d)}{\varphi(l)}$, unless $v_{q_i}(s)=a_i$ is maximal.
\end{proof}

We now look at how the order of a character changes under induction. For integers $n,d>0$, we define the greatest common divisor of $n$ and $d^{\infty}$ to be $\gcd(n,d^{\infty}):= \lim\limits_{r \rightarrow \infty} \gcd(n,d^r)$.

\begin{lemma}
\label{ind}
	Let $\chi$ be a character of order $d$, $n>0$ a positive integer. Let $\tau= \Ind_{C_d}^{C_{dn}} \chi$. Let $g=\gcd(n,d^{\infty})$. Then $\tau$ consists of precisely $g\varphi(n_1)$ characters of order $dgn_1$ for each positive divisor $n_1$ of $\frac{n}{g}$.
\end{lemma}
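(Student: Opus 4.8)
The plan is to compute $\tau = \Ind_{C_d}^{C_{dn}}\chi$ completely explicitly, exploiting that the ambient group is abelian. By Frobenius reciprocity, for every character $\psi$ of $C_{dn}$ one has $\langle \Ind_{C_d}^{C_{dn}}\chi,\psi\rangle = \langle \chi,\psi|_{C_d}\rangle$, which equals $1$ if $\psi|_{C_d}=\chi$ and $0$ otherwise. Hence $\tau$ is the multiplicity-free sum of the $[C_{dn}:C_d]=n$ distinct characters of $C_{dn}$ restricting to $\chi$, and the whole statement reduces to determining the orders of these $n$ extensions.

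To do this I would fix a generator $\sigma$ of $C_{dn}$ (so $C_d=\langle\sigma^n\rangle$) and a primitive $(dn)$-th root of unity $\zeta$. Writing $\chi(\sigma^n)=\zeta^{nc}$ with $\gcd(c,d)=1$ (possible since $\chi$ has order $d$), a character $\psi$ with $\psi(\sigma)=\zeta^a$ extends $\chi$ exactly when $a\equiv c\bmod d$, i.e. $a = c+jd$ for $j=0,1,\dots,n-1$; this character, call it $\psi_j$, has order $dn/\gcd(c+jd,dn)$. The key elementary computation is that $\gcd(c+jd,dn)=\gcd(c+jd,n/g)$ for $g=\gcd(n,d^\infty)$: since $\gcd(c+jd,d)=1$, no prime dividing $d$ divides $c+jd$, while for a prime $q\nmid d$ we have $v_q(dn)=v_q(n)=v_q(n/g)$; comparing prime by prime (and noting $n/g$ is coprime to $d$) gives the identity. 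Setting $m=n/g$, the order of $\psi_j$ is therefore $dg\cdot m/\gcd(c+jd,m)$.

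It then remains to count, for each divisor $n_1$ of $m$, the number of $j\in\{0,\dots,n-1\}$ with $\gcd(c+jd,m)=m/n_1$. Because $\gcd(d,m)=1$, the residue $c+jd\bmod m$ depends only on $j\bmod m$ and runs bijectively over $\zz/m\zz$ as $j$ runs over a complete residue system mod $m$; since $n=gm$, each residue class mod $m$ is attained exactly $g$ times by $j\in\{0,\dots,n-1\}$. As the number of $x\in\zz/m\zz$ with $\gcd(x,m)=m/n_1$ is $\varphi(n_1)$, exactly $g\varphi(n_1)$ of the $\psi_j$ have order $dgn_1$, and these are distinct characters, which is the assertion. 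As a consistency check one has $\sum_{n_1\mid m}g\varphi(n_1)=gm=n=\dim\tau$.

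I expect no genuine obstacle here: the two points needing a little care are the reduction $\gcd(c+jd,dn)=\gcd(c+jd,n/g)$ and the observation that $j\mapsto (c+jd)\bmod m$ is exactly $g$-to-one on $\{0,\dots,n-1\}$, and both are routine once phrased as above. The only mild subtlety is keeping straight which factors of $n$ are absorbed into $g$ versus $m$, i.e. separating the primes dividing $d$ from those not.
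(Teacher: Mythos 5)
Your argument is correct, but it takes a genuinely different route from the paper. You compute $\Ind_{C_d}^{C_{dn}}\chi$ in one shot: Frobenius reciprocity identifies it as the multiplicity-free sum of the $n$ characters of $C_{dn}$ extending $\chi$, and you then determine their orders via the explicit parametrisation $\psi_j(\sigma)=\zeta^{c+jd}$, the key identity $\gcd(c+jd,dn)=\gcd(c+jd,n/g)$ (valid since $\gcd(c+jd,d)=1$ and $n/g$ carries exactly the prime-to-$d$ part of $n$), and the observation that $j\mapsto c+jd \bmod (n/g)$ is exactly $g$-to-one on $\{0,\dots,n-1\}$, which yields $g\varphi(n_1)$ characters of order $dgn_1$ for each $n_1\mid n/g$. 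The paper instead induces in two stages: first from $C_d$ to $C_{dn'}$ with $n'=n/g$, where coprimality gives $\Ind_{C_d}^{C_{dn'}}\chi=\chi\otimes\cc[C_{n'}]$ and hence $\varphi(n_1)$ characters of order $dn_1$ for each $n_1\mid n'$, and then from $C_{dn'}$ to $C_{dn}$, arguing that each summand of $\Ind_{C_{dn_1}}^{C_{dgn_1}}$ of a primitive character must itself be primitive because every prime dividing $g$ divides $dn_1$; this produces the same count of $g$ characters of order $dgn_1$ per summand. The paper's stage-wise argument is shorter and more structural, but its primitivity step is terse; your version is more computational yet completely explicit (it even identifies the constituents as precisely the extensions of $\chi$, multiplicity one, with the dimension check $\sum_{n_1\mid n/g} g\varphi(n_1)=n$), so both are valid and the difference is one of packaging rather than substance.
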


\begin{proof}
We do this by first inducing $\chi$ to $C_{dn'}$ where $n'=\frac{n}{g}$. Now $\Ind_{C_d}^{C_{dn'}} \chi = \chi \otimes \cc[C_{n'}]$, which consists of $\varphi(n_1)$ characters of order $dn_1$ for each divisor $n_1$ of $n'$. Now observe that each prime divisor $q$ of $g$ is also a divisor of $d$ and hence all $dn_1$. Hence if $\psi$ is a primitive character of $C_{dn_1}$, then each summand of $\Ind_{C_{dn_1}}^{C_{dgn_1}} \psi$ is necessarily primitive so yields $g$ characters of order $dgn_1$ and we are done.
\end{proof}

\begin{corollary}
\label{genind}
Let $n>0$ be an integer, $\rho_d$ the direct sum of all characters of order $d$ of $\cc[C_d]$. Let $g=\gcd(n,d^{\infty})$. Then $$\Ind_{C_d}^{C_{dn}} \rho_d = \bigoplus\limits_{n_1| \frac{n}{g}} \frac{g\varphi(d)}{\varphi(gd)} \rho_{dgn_1}.$$
\end{corollary}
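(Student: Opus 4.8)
The plan is to reduce immediately to Lemma~\ref{ind} by decomposing $\rho_d$ into its constituent characters. By definition $\rho_d=\bigoplus_{\chi}\chi$, where $\chi$ runs over the $\varphi(d)$ characters of $C_d$ of order exactly $d$, so by additivity of induction $\Ind_{C_d}^{C_{dn}}\rho_d=\bigoplus_{\chi}\Ind_{C_d}^{C_{dn}}\chi$. Lemma~\ref{ind} describes each summand: with $g=\gcd(n,d^{\infty})$, the representation $\Ind_{C_d}^{C_{dn}}\chi$ consists of exactly $g\varphi(n_1)$ characters of order $dgn_1$ for each positive divisor $n_1$ of $n/g$, and (since these already account for $\sum_{n_1\mid n/g}g\varphi(n_1)=n=\dim\Ind_{C_d}^{C_{dn}}\chi$ summands) of no others.

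Next I would check that there is no hidden multiplicity when we add these up over $\chi$. Since $[C_{dn}:C_d]=n$ and both groups are abelian, Frobenius reciprocity gives $\Ind_{C_d}^{C_{dn}}\chi=\bigoplus_{\psi}\psi$, the sum running over the $n$ distinct characters $\psi$ of $C_{dn}$ with $\psi|_{C_d}=\chi$; in particular each such induction is multiplicity-free. Moreover a character $\psi$ of $C_{dn}$ occurs in $\Ind_{C_d}^{C_{dn}}\chi$ for at most one $\chi$, namely $\chi=\psi|_{C_d}$, so the summands coming from distinct $\chi$'s are pairwise distinct. Hence $\Ind_{C_d}^{C_{dn}}\rho_d$ is itself multiplicity-free and contains exactly $\varphi(d)\,g\,\varphi(n_1)$ characters of order $dgn_1$ for each $n_1\mid n/g$, and no characters of any other order.

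Comparing with $\rho_{dgn_1}$, which is by definition the sum of all $\varphi(dgn_1)$ characters of order $dgn_1$, the multiplicity of $\rho_{dgn_1}$ in $\Ind_{C_d}^{C_{dn}}\rho_d$ is $\dfrac{\varphi(d)\,g\,\varphi(n_1)}{\varphi(dgn_1)}$. To finish I would simplify this: since $n_1\mid n/g$ and $g$ is by construction the minimal divisor of $n$ with $\gcd(n/g,d)=1$, we have $\gcd(n_1,d)=1$; and every prime divisor of $g$ divides $d$ (the Remark following the definition of $\gcd(n,d^{\infty})$), so also $\gcd(n_1,g)=1$, whence $\gcd(n_1,dg)=1$. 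Multiplicativity of $\varphi$ then gives $\varphi(dgn_1)=\varphi(dg)\varphi(n_1)$, so the multiplicity is $\dfrac{g\varphi(d)}{\varphi(dg)}$, exactly the stated coefficient (one may note in passing that, as $dg$ has the same prime divisors as $d$, this coefficient is in fact $1$). None of this is deep; the only point needing a moment's care is the multiplicity bookkeeping in the second paragraph — verifying that the characters produced by Lemma~\ref{ind} for different order-$d$ characters $\chi$ never overlap, so that summing the counts is legitimate — and after that it is the routine $\varphi$-and-$\gcd$ computation isolated above.
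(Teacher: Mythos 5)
Your proof is correct and follows essentially the same route as the paper: the paper deduces the corollary directly from Lemma~\ref{ind} together with the observation that $\gcd(gd,n_1)=1$ gives $\varphi(gdn_1)=\varphi(gd)\varphi(n_1)$, which is exactly your final computation. The only difference is that you spell out the multiplicity bookkeeping (multiplicity-freeness of each $\Ind_{C_d}^{C_{dn}}\chi$ and disjointness across distinct $\chi$ via Frobenius reciprocity), which the paper leaves implicit; your parenthetical remark that the coefficient $\frac{g\varphi(d)}{\varphi(gd)}$ equals $1$ is also correct.
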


\begin{proof}
This follows from Lemma \ref{ind}, noting that $\gcd(gd,n_1)=1$ so $\varphi(gdn_1)=\varphi(gd)\varphi(n_1)$.
\end{proof}

With those lemmas out of the way, we can now start involving clusters and begin piecing the representation together. For a proper cluster $\ss$, we let $\ss^{odd}$ denote the set of odd children of $\ss$ and $n'_{\ss}$ be the orbit length of non-orphan children of $\ss$ under the inertia stabiliser $I_{\ss}$ of $\ss$.

\begin{lemma}
\label{orbsplit}
	Let $\ss$ be a proper cluster. Then $$\cc[\ss^{odd}] \ominus \mathds{1} = \left\lfloor \dfrac{|\ss^{odd}|}{n'_{\ss}} \right\rfloor \cc[C_{n'_{\ss}}] \oplus \left( |\ss^{odd}| - n'_{\ss}\left\lfloor \dfrac{|\ss^{odd}|}{n'_{\ss}} \right\rfloor -1 \right)\mathds{1}.$$
\end{lemma}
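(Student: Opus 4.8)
The plan is to identify $\cc[\ss^{odd}]$ with the permutation representation of $\Is$ on the set of odd children of $\ss$, and then to decompose it orbit by orbit using Theorem \ref{mainpf}ii.

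First I would record the structural input. The group $\Is$ is cyclic, being a subgroup of the tame (hence cyclic) inertia group $I$. If a cyclic group $\Is$ acts transitively on a set of size $\ell$, the point stabiliser has index $\ell$, the action factors through the regular action of $\Is/\Stab \cong C_\ell$, and the associated permutation representation is $\cc[C_\ell]$. Hence $\cc[\ss^{odd}]$ is the direct sum, over the $\Is$-orbits of odd children of $\ss$, of the representations $\cc[C_\ell]$ where $\ell$ is the length of the orbit.

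Next I would feed in Theorem \ref{mainpf}ii: every orbit of children of $\ss$ under $\Is$ has length $n'_{\ss}$, with the single possible exception of a fixed child, namely the orphan. Passing to odd children, let $\delta \in \{0,1\}$ record whether the orphan exists and is odd, and let $m \geqslant 0$ be the number of $\Is$-orbits of odd non-orphan children; then every odd child is accounted for and $$|\ss^{odd}| = n'_{\ss}\, m + \delta, \qquad \cc[\ss^{odd}] \cong m\,\cc[C_{n'_{\ss}}] \oplus \delta\,\mathds{1}.$$ Here I would also note that $\delta = 1$ forces $n'_{\ss} \geqslant 2$: if $n'_{\ss}=1$ then $\Is$ fixes every child of $\ss$, so no child is the \emph{unique} fixed child and there is no orphan.

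Finally the arithmetic. When $n'_{\ss} \geqslant 2$ we have $0 \leqslant \delta < n'_{\ss}$, so $\left\lfloor |\ss^{odd}|/n'_{\ss} \right\rfloor = m$ and $|\ss^{odd}| - n'_{\ss}\left\lfloor |\ss^{odd}|/n'_{\ss}\right\rfloor - 1 = \delta - 1$; since $\delta\,\mathds{1} \ominus \mathds{1} = (\delta-1)\,\mathds{1}$, the right-hand side of the lemma equals $m\,\cc[C_{n'_{\ss}}] \oplus \delta\,\mathds{1} \ominus \mathds{1} = \cc[\ss^{odd}] \ominus \mathds{1}$. In the remaining case $n'_{\ss}=1$ we have $\delta=0$ and $\cc[\ss^{odd}] = |\ss^{odd}|\,\mathds{1}$, and both sides of the claimed identity equal $(|\ss^{odd}|-1)\,\mathds{1}$. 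I do not anticipate a serious obstacle; the only delicate point is the bookkeeping that isolates the at-most-one odd orphan as the sole non-generic $\Is$-orbit, together with the observation that its presence forces $n'_{\ss}\geqslant 2$, which is precisely what makes the floor and remainder terms come out as stated.
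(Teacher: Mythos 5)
Your proof is correct and follows essentially the same route as the paper's: decompose $\cc[\ss^{odd}]$ orbit-by-orbit using Theorem \ref{mainpf}ii, note that an orphan cannot exist when $n'_{\ss}=1$ (so $\left\lfloor |\ss^{odd}|/n'_{\ss}\right\rfloor$ counts exactly the non-orphan odd orbits), and identify the leftover multiplicity of $\mathds{1}$ with the possible odd orphan. Your explicit handling of the $n'_{\ss}=1$ case and the identification of each transitive $\Is$-orbit with $\cc[C_{n'_{\ss}}]$ merely spell out steps the paper leaves implicit.
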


\begin{proof}
	We first claim that the number of orbits of non-orphan children of $\ss^{odd}$ under $\Is$ is $\left\lfloor \frac{|\ss^{odd}|}{n'_{\ss}} \right\rfloor$. Since the length of each orbit is $n'_{\ss}$, this is immediate if there is no orphan. Otherwise, there is exactly one orphan so the number of orbits is $\frac{|\ss^{odd}|-1}{n'_{\ss}}$. The existence of an odd orphan precludes the case $n'_{\ss}=1$ hence $\frac{|\ss^{odd}|-1}{n'_{\ss}}=\left\lfloor \frac{|\ss^{odd}|}{n'_{\ss}} \right\rfloor$. The remainder of the lemma follows since $$|\ss^{odd}| - n'_{\ss}\left\lfloor \frac{|\ss^{odd}|}{n'_{\ss}} \right\rfloor = \begin{cases}
		0 &\text{ if $\ss$ doesn't have an odd orphan,} \\
		1 &\text{ else.}
	\end{cases}$$
\end{proof}

We can now finally prove the theorem. We define $\beta(n,s)=\dfrac{\gcd(n,s^{\infty})\varphi(s)}{\varphi(s\gcd(n,s^{\infty}))}$.

\begin{proof}[Proof of Theorem \ref{repthmpf}]
	Recall that $V_{\ss} \oplus \varepsilon_{\ss}=\gamma_{\ss} \otimes (\cc[\ss^{odd}] \ominus \mathds{1})$. By Lemma \ref{orbsplit}, this equals $\gamma_{\ss} \otimes \left( \left\lfloor \dfrac{|\ss^{odd}|}{n'_{\ss}} \right\rfloor \cc[C_{n'_{\ss}}] \oplus \left( |\ss^{odd}| - n'_{\ss}\left\lfloor \dfrac{|\ss^{odd}|}{n'_{\ss}} \right\rfloor -1 \right)\mathds{1} \right)$.
	
	We first concentrate on $X= \gamma_{\ss} \otimes \cc[C_{n'_{\ss}}]$. Note that $\cc[C_{n'_{\ss}}] = \bigoplus\limits_{d \mid n_{\ss}'} \rho_d$ and hence by Lemma \ref{gentwist}, we have $X= \bigoplus\limits_{d \mid n_{\ss}'} \bigoplus\limits_{s \in S_{d,t}} \alpha_{d,t,s} \rho_s$.
	
	Now since $[I:I_{\ss}]=n_{\ss}$, $\Ind_{I_{\ss}}^I \rho_s = \Ind_{C_s}^{C_{n_{\ss}s}} \rho_s = \bigoplus\limits_{n_1 \mid \frac{n_{\ss}}{\gcd(n,s^{\infty})}} \beta(n_{\ss},s)\rho_{s\gcd(n,s^{\infty})n_1}$ by Corollary \ref{genind}. Hence $\Ind_{I_{\ss}}^I X = \bigoplus\limits_{d \mid n_{\ss}'} \bigoplus\limits_{s \in S_{d,t}} \bigoplus\limits_{n_1 \mid \frac{n_{\ss}}{\gcd(n,s^{\infty})}} \alpha_{d,t,s} \beta(n_{\ss},s)\rho_{s\gcd(n,s^{\infty})n_1}$. A similar arguments applies for the remaining terms, where we note that $\gamma_{\ss}=\frac{1}{\varphi(t)}\rho_{t}$ and use Proposition \ref{toricprop} for $\Ind_{\Is}^I \varepsilon_{\ss}$.
\end{proof}

\section{Genus two classification}
\label{tables}

Below we tabulate all possible options for cluster pictures of polynomial type (with the possible sets of depth denominators) in genus two. Moreover, we give the $H^1_{ab}$ and $H^1_t$ parts of the inertia representation for any genus two hyperelliptic curve $C:y^2=f(x)$ with that cluster picture, under the assumption that $f$ is monic. Note that it is still possible to compute $H^1_{ab}$ and $H^1_t$ without this monic assumption but we use it for the sake of simplicity (as inertia representations they only depend on the valuation of the leading coefficient). Moreover, it is always possible to make a change of variables of the curve to ensure $f$ is monic when $\deg f$ is odd.

In contrast to the 120 Namikawa--Ueno types, we find 44 cluster pictures (11 for five roots) and 276 possible denominator tuples for them (55 for five roots). We mention that there is already a type naming convention for the semistable ones (which has a correspondence to the Namikawa--Ueno types) in \cite[p.79]{DDMM}. However, we neither include nor extend this to our table since this type encodes further information such as the Frobenius action which we do not use here.



\newpage

\subsection{$|\cR|=5$}

\begin{itemize}
	\item[]
\end{itemize}
\vspace*{-2mm}

\resizebox{0.98 \textwidth}{!}{


\bibliographystyle{alpha}
\bibliography{Everything}
\end{document}